\numberwithin{equation}{section}
\setlist[enumerate,1]{label={\rm(\roman*)}, ref={\rm\roman*}} 
\newtheorem{theorem}{Theorem}[section]
\newtheorem{proposition}[theorem]{Proposition}
\newtheorem{lemma}[theorem]{Lemma}
\newtheorem{corollary}[theorem]{Corollary}
\theoremstyle{definition}
\newtheorem{definition}[theorem]{Definition}
\theoremstyle{remark}
\newtheorem{remark}[theorem]{Remark}
\newtheorem{example}[theorem]{Example}
\newcommand{\Dis}{\displaystyle}
\def\Zee{\mathbb{Z}}
\def\Q{\mathbb{Q}}
\def\Cee{\mathbb{C}}
\def\Pee{\mathbb{P}}
\def\Ker{\operatorname{Ker}}
\def\Coker{\operatorname{Coker}}
\def\Ext{\operatorname{Ext}}
\def\Pic{\operatorname{Pic}}
\def\Gr{\operatorname{Gr}}
\def\Sym{\operatorname{Sym}}
\def\Spec{\operatorname{Spec}}
\def\scrO{\mathcal{O}}
\def\spcheck{^{\vee}}
\def\hX{\widehat{X}}
\def\hY{\widehat{Y}}
\newcommand{\lra}{\longrightarrow}
\title{Deformations of some local Calabi--Yau manifolds}
\author{Robert Friedman}
\address{Columbia University, Department of Mathematics, New York, NY 10027, USA}
\email{rf@math.columbia.edu}
\author{Radu Laza}
\address{Stony Brook University, Department of Mathematics, Stony Brook, NY 11794, USA}
\email{radu.laza@stonybrook.edu}
\begin{document}



\maketitle

\begin{prelims}

\DisplayAbstractInEnglish

\bigskip

\DisplayKeyWords

\medskip

\DisplayMSCclass

\end{prelims}


\newpage

\setcounter{tocdepth}{1}

\tableofcontents


\section{Introduction}

This paper is part of a series \cite{FL,FL23,FL23b} investigating the deformation theory of singular Calabi--Yau varieties, \textit{i.e.}  compact analytic spaces $Y$  with isolated Gorenstein canonical singularities such that $\omega_Y\cong \scrO_Y$, building on and generalizing previously known results due to  Kawamata, Namikawa, Namikawa--Steenbrink, and others; see \cite{F, Kawamata, NS,  namtop,namstrata}. The deformation theory  of $Y$ as studied in \cite{FL} has both a local and a global aspect. Locally, if $x\in Y$ is a singular point, we can study the deformation functor $\mathbf{Def}_{(Y,x)}$ of the germ $(Y,x)$. In particular, the tangent space to this functor, or equivalently the ``first-order deformations,'' \textit{i.e.}~the deformations over the dual numbers $\Spec \Cee[\varepsilon]$, are classified by a finite-dimensional vector space $T^1_{Y,x}$, or equivalently by the corresponding skyscraper sheaf supported at $x$. Globally, for a compact analytic space $Y$, there is  the deformation functor  $\mathbf{Def}_Y$, whose tangent space is a finite-dimensional vector space that we   denote by $\mathbb{T}^1_Y$. There is a corresponding sheaf   $T^1_Y$, which is supported on the singular locus of $Y$. In case the singularities of $Y$ are isolated,  $\mathbb{T}^1_Y$ is a skyscraper sheaf supported at the singular points and the stalk of $\mathbb{T}^1_Y$ at $x$ is the vector space $T^1_{Y,x}$. There is a natural morphism of deformation functors $\mathbf{Def}_Y \to \prod_{x\in Y_{\text{\rm{sing}}}} \mathbf{Def}_{(Y,x)}$. Note that $\mathbf{Def}_Y$ and $\mathbf{Def}_{(Y,x)}$ are pro-represented by germs of analytic spaces and the morphism of functors corresponds to a morphism of germs of analytic spaces. On Zariski tangent spaces, the differential of this morphism of functors or germs of spaces becomes a homomorphism $\mathbb{T}^1_Y \to H^0(Y; T^1_Y) = \bigoplus_{x\in Y_{\text{\rm{sing}}}}   T^1_{Y,x}$.   For $\dim Y \ge 3$, this morphism is almost never surjective.  It is thus important  to identify interesting tangent directions in $\bigoplus_{x\in Y_{\text{\rm{sing}}}}  T^1_{Y,x}$ and try to lift these to $\mathbb{T}^1_Y$. If the deformations of $Y$ are unobstructed, such first-order deformations of $Y$ will come from actual deformations. 

Taking a local point of view, let $(X,x)$ be the germ of an isolated Gorenstein canonical singularity (or equivalently an isolated rational Gorenstein  singularity; see  \cite{KollarMori}, \cite[Section~11.1]{Kollar}). We will usually take $X$   to be a \textsl{good Stein representative} for the germ $(X,x)$, i.e.\ a contractible Stein representative with a unique singular point $x$. Let $\pi\colon \hX\to X$ be a  \textsl{good resolution} or a  \textsl{log resolution}, i.e.\  $\pi^{-1}(x) = E$ is a divisor with simple normal crossings. The assumption of Gorenstein canonical singularities means that $K_{\hX} = \pi^*\omega_X\otimes \scrO_{\hX}(D)$ for some effective divisor $D$ on $\hX$. If $D =0$, we say that $\pi$ is a \textsl{good crepant resolution} of $X$. Typical examples of singularities that admit good crepant resolutions are the $O_{16}$ singularities,  i.e.\  affine cones over a smooth cubic surface in $\Pee^3$. More general examples are singularities which are analytically isomorphic to cones over   Fano manifolds embedded via the anticanonical line bundle, for example the cone over a smooth hypersurface of degree $n$ in $\Pee^n$. A related   case is that where there exists a \textsl{small resolution} $\pi'\colon X'\to X$, i.e.\ a resolution where the fiber of $\pi'$ over $x$ has dimension $1$ (more generally, one could also consider the case where the exceptional set  has dimension less that $\dim X -1$). However, at least when $X$ is a local complete intersection and $\dim X \ge 3$, such resolutions can only exist for $\dim X =3$ (see Remark~\ref{smallres}).   Note that  small resolutions $\pi'\colon X' \to X$ are automatically crepant, in the sense that $K_{X'}\cong \scrO_{X'}$. Examples of  such singularities are  $A_{2k-1}$ singularities  in dimension $3$: these are   locally defined by the equation $x^2+ y^2+ z^2 + w^{2k}$. A singularity $X$ admitting a small resolution is terminal but not (locally) $\Q$-factorial. However, most   canonical singularities do not admit good crepant or small resolutions. For instance,  terminal $\Q$-factorial singularities, such as the $A_{2k}$ singularities  in dimension $3$ (\textit{i.e.} those given by $x^2+ y^2+ z^2 + w^{2k+1}$, $k\geq 1$), do not have either a good crepant or a small resolution.

There is   a natural subspace of $H^0(X; T^1_X) $ defined as follows: Let $U = X -\{x\} = \hX -E$, where as above $\pi\colon \hX\to X$ is a good resolution and $E = \pi^{-1}(x)$. In case $\dim X \ge 3$, a theorem of Schlessinger  implies that $H^0(X; T^1_X) \cong H^1(U; T_U)$, where $T_U$ is the tangent bundle of $U$; see \cite[Theorem 2]{Schlessinger}. By Wahl's theory, see \cite{Wahl},  there is a morphism of functors $\mathbf{Def}_{\hX} \to \mathbf{Def}_X$ where the induced map on tangent spaces is the natural restriction map
$$H^1\left(\hX;T_{\hX}\right) \lra H^1\left(U; T_U\right) \cong H^0\left(X; T^1_X\right).$$ 
Informally, we can think of the image of $\mathbf{Def}_{\hX}$ as the \textsl{simultaneous resolution locus}, i.e.\ as the   deformations of $X$ which lift to deformations of the resolution $\hX$. If $\pi\colon \hX \to X$ is a good crepant resolution, then $\mathbf{Def}_{\hX}$ is unobstructed  by a theorem of Gross; \textit{cf.} \cite[Proposition 3.4]{gross_defCY}. If $\pi'\colon X' \to X$ is a small resolution, then $\mathbf{Def}_{X'}$ is unobstructed by \cite[Proposition 2.1]{F}. If $X$ is a local complete intersection singularity, then $\mathbf{Def}_X$ is always unobstructed (\textit{cf.} \cite[Section~6]{Looijenga}).
 
In trying to understand the image of the morphism $\mathbf{Def}_{\hX} \to \mathbf{Def}_X$, and more concretely the image of $H^1(\hX;T_{\hX})$ in $H^0(X; T^1_X)$, and their relevance in the Calabi--Yau case, there are two major obstacles: 

\begin{enumerate}[(1)]
\item In the local setting,  the image of $H^1(\hX;T_{\hX})$ in $H^0(X; T^1_X)$ is not a birational invariant, i.e.\ is not independent of the choice of a good resolution. The possible naturally occurring birationally invariant subspaces of $H^0(X; T^1_X)$ are rather the images of $H^1(\hX; \Omega^{n-1}_{\hX}(\log E))$ or $H^1(\hX; \Omega^{n-1}_{\hX}(\log E)(-E))$. These images are studied in \cite[Theorem 2.1(iii)]{FL}, where we prove that the image of $H^1(\hX; \Omega^{n-1}_{\hX})$ is the same as that of $H^1(\hX; \Omega^{n-1}_{\hX}(\log E)(-E))$, at least in the local complete intersection case or if $\dim X =3$, and is thus independent of the choice of resolution. However, this image  does not seem to have an obvious deformation-theoretic interpretation. One important case where such an interpretation exists is when $\hX$ is a good crepant resolution of $X$:  In this case $\Omega^{n-1}_{\hX} \cong T_{\hX}$, and hence the image of $H^1(\hX; \Omega^{n-1}_{\hX}(\log E)(-E))$ agrees with that of $H^1(\hX;T_{\hX})$.  A similar result holds in the case of a small resolution (Proposition~\ref{smallvbig}\eqref{smallvbig-1}). 
\item In the global setting, where $Y$ is a Calabi--Yau variety, it seems difficult to lift deformations arising in this manner to global deformations of $Y$. For example, in dimension $3$ and for small resolutions $Y'$, this issue is connected with Clemens' conjecture about smooth rational curves in $Y'$ in the case where $Y'$ is a quintic threefold, see \cite{Clemensconj}, which is still open and where we have nothing new to add. For another example,   the deformations of a quintic threefold $Y$ with an $O_{16}$ singularity $x\in Y$ are versal for the deformations of the isolated singularity at $x$; \textit{i.e.} the map $\mathbb{T}^1_Y \to H^0(Y; T^1_Y)$ is surjective. Hence, if $\hY \to Y$ is the natural crepant resolution and $X$ is a good Stein representative of the germ $(Y,x)$ with crepant resolution $\hX$, the image of  $\mathbb{T}^1_Y \to H^0(Y; T^1_Y)$ contains the image of $H^1(\hX;T_{\hX})$. However, the analogous result fails for hypersurfaces of degree $n+2$ in $\Pee^{n+1}$ containing an isolated singularity isomorphic to the cone over a hypersurface of degree $n$ in $\Pee^n$ for $n \ge 4$, and in this case ``most'' of  the image of $H^1(\hX;T_{\hX})$ fails to lift to $\mathbb{T}^1_Y$.  Thus, the strategy of \cite{FL} is to work modulo the deformations induced from a resolution, i.e.\  with $K=\Coker\left(H^1(\hX;\Omega^{n-1}_{\hX}(\log E)(-E))\to H^0(X; T^1_X)\right)\cong H^2_E(\hX;\Omega^{n-1}_{\hX}(\log E))$  (see \cite[Theorem 2.1(v)]{FL}). This has the virtue of globalizing well in the Calabi--Yau and Fano case; see \cite[Sections~4 and 5]{FL}.
\end{enumerate}

Nonetheless, the study of $\mathbf{Def}_{\hX}$ and of $H^1(\hX;T_{\hX})$ involves  a lot of interesting geometry, and the goal of this paper is to investigate some of this geometry. For the reasons outlined above, we restrict to the local case and (mostly) either to the good crepant case or to the case of a small resolution. 
 Finally, for most of our results, we restrict to dimension $n=3$.  For a general $n\ge 3$,   by  a theorem of Reid \cite[Theorem 2.6]{Reidcanon},   a  canonical Gorenstein singularity, not necessarily admitting a crepant resolution,  can be realized as the total space of a one-parameter deformation of a Gorenstein rational   or   elliptic singularity of dimension $n-1$. In dimension $2$, Gorenstein rational  singularities are rational double points, and this case does not arise if $\hX$ is a good crepant resolution. As for Gorenstein elliptic (minimally elliptic) singularities,  the most well-studied classes of such singularities are the simple elliptic and cusp singularities, and their deformation theory has been studied extensively. In particular, as discussed below, semistable models for deformations of cusp singularities are a plentiful source of examples of good crepant resolutions and can be obtained systematically by the methods of   \cite{FriedmanMiranda,Engel,FriedmanEngel}.

The contents of this paper are as follows.   Section~\ref{Section1} analyzes the deformation theory of good  crepant resolutions $\pi\colon\hX \to X$, where $\dim X \ge 3$. The main result is Theorem~\ref{goodcrepdef},  which gives some very general results about the first-order deformations of $\hX$ and their relation to first-order deformations of $E$. In this case, the tangent space to the deformation functor $\mathbf{Def}_E$ is the vector space $\mathbb{T}^1_E$, and there is a corresponding sheaf $T^1_E$. Then 
Theorem~\ref{goodcrepdef} relates   the tangent space to $\mathbf{Def}_{\hX}$ and the corresponding obstruction space  to the cohomology of the exceptional divisor $E$  and its components $E_i$.  Among other things, we show the following. 

\begin{theorem} Let $\pi\colon \hX \to X$ be a good crepant resolution of the isolated rational singularity $X$, with $n=\dim X \geq 3$, and let $E = \bigcup_{i=1}^rE_i$ be the exceptional divisor of\, $\pi$.  Then the maps $\mathbb{T}^1_E \to H^0(E;T^1_E)$ and $H^1(\hX;T_{\hX}) \to H^0(E;T^1_E)$ are surjective. In particular, all first-order smoothing directions for $E$ are realized via first-order deformations of\, $\hX$. The   map $H^1(\hX;T_{\hX}) \to \mathbb{T}^1_E$ is surjective if and only if     $H^2(\hX; T_{\hX}(- E)) =0$. In this case, all first-order locally trivial deformations of\, $E$ are realized by first-order deformations of\,  $\hX$ and the divisors $E_1, \dots, E_r$.  
\end{theorem} 

More precise results require   better control of the structure of $E$, which in turn leads  to restricting to the case $\dim X =3$,  the running assumption starting with Section~\ref{Section2}. 

In dimension $2$,   Gorenstein canonical singularities are  du Val singularities, also called rational double point (RDP),   simple, or ADE  singularities. Their structure is well understood, as is their deformation theory.  The purpose of Sections~\ref{Section2} and~\ref{Section3} is to give a partial classification of the isolated Gorenstein canonical threefold singularities $(X,x)$ which admit good crepant resolutions,   $3$-dimensional analogues of the ADE case, and to discuss their associated deformation theory.   As noted above, there is a close connection between isolated threefold canonical Gorenstein singularities  and one-parameter smoothings of minimally elliptic  singularities. In the case of simple elliptic and cusp singularities, such smoothings are in turn   closely related to  degenerations  
of $K3$ surfaces (\textit{cf.} for example  \cite{FriedmanMiranda}). This leads us to define  \textsl{divisors of Type II, Type III${}_1$, and Type III${}_2$} (Definition \ref{defcrep1} and Figures \ref{fig1}, \ref{fig2}, \ref{fig3}). We then obtain a partial classification of the threefold singularities admitting good crepant resolutions in the special case of the total space of a one-parameter smoothing of a simple elliptic or cusp singularity, as follows. 

\begin{theorem}[= Theorems \ref{somecrepresults} and \ref{partialconverse}]\label{0.1}  Let $(X,x)$ be an isolated Gorenstein canonical  singularity  of dimension $3$ with a good crepant resolution $\pi \colon \hX \to X$, and let $E =\pi^{-1}(x)$ be the reduced exceptional divisor. 
  \begin{enumerate}
  \item[\rm(i)] If the general hypersurface section of\, $X$ passing through $x$ is a simple elliptic singularity, then $E$ is of Type II.
\item[\rm(ii)] If the general hypersurface section of\, $X$ passing through $x$ is a cusp and $\omega_E^{-1}$ is nef and big, then $E$ is of Type III${}_1$ or Type III${}_2$.
 \item[\rm(ii)$'$] If the general hypersurface section $S$ of\, $X$ passing through $x$ is a cusp and the full inverse image  $\pi^{-1}(S)$ has normal crossings, then after a sequence of flops $($elementary modifications of type 2\,$)$, $\omega_E^{-1}$ becomes nef and big, hence $E$ is of Type III${}_1$ or Type III${}_2$.  
 \end{enumerate}
 \end{theorem}
 
Combining Theorem~\ref{0.1}  with Theorem~\ref{goodcrepdef}, we are able   to obtain a deeper understanding of   various deformation-theoretic invariants, especially in the Type II and Type III${}_1$ cases. In particular, the following is a somewhat less precise formulation of  Proposition~\ref{prop6}, Proposition~\ref{prop2.12TypeIII1}, and Remark~\ref{remark2.12TypeIII2}. 

\begin{theorem} If $E$ is of Type II and irreducible or if $E$ is of Type III${}_1$, then $H^2(\hX; T_{\hX}(-E)) =0$ and hence the  map $H^1(\hX;T_{\hX}) \to \mathbb{T}^1_E$ is surjective. However, if $E$ is of Type II and reducible, then the  map $H^1(\hX;T_{\hX}) \to \mathbb{T}^1_E$ is never surjective, and  if $E$ is of Type III${}_2$, then the  map $H^1(\hX;T_{\hX}) \to \mathbb{T}^1_E$ is not in general surjective. 
\end{theorem}

In Section~\ref{Section4}, we switch our attention to the case of singularities which admit small resolutions $p \colon X'\to X$,     technically  a much simpler case. Here, we  relate $H^1(X'; T_{X'})$ to the birational invariants $H^1(\hX;\Omega^2_{\hX}(\log E)(-E))$,  $H^1(\hX;\Omega^2_{\hX})$, and $H^1(\hX;\Omega^2_{\hX}(\log E))$ arising from a good resolution; these invariants are controlled by the Du Bois invariants $b^{p,q}$ and link invariants $\ell^{p,q}$ introduced by Steenbrink; see \cite{SteenbrinkDB}. In particular, we recover some results of Steenbrink regarding the dimension of the versal deformation spaces for such singularities  (Remark \ref{rem-dim})  and  discuss some interesting examples (Examples \ref{ex-a2n-1} and \ref{ex-cA}). After the first version of this paper was posted, Sz-Sheng Wang sent us a preprint (now \cite{SSWang}) which has substantial overlap with the material in Section~\ref{Section4}.  

In the final Section~\ref{Section5}, we discuss  a noncrepant example of a very special type, the blowup $\hX$ of a smooth curve $C$ which is the exceptional set of a small resolution $\pi \colon X'\to X$. In this case, $X$ is a threefold $A_{2n-1}$ singularity, \textit{i.e.} defined locally by the equation $x^2 + y^2 + z^2 + w^{2n}$,  where we assume $n \ge 2$. The question here is to relate the deformations of $\hX$ to those of $X'$ and $X$. In particular, we show the following (Theorem~\ref{noncrepthm}). 

\begin{theorem} For the above example, let $(S_{\hX},0)$ and $(S_{X'},0)$ be the germs prorepresenting the functors $\mathbf{Def}_{\hX}$ and $ \mathbf{Def}_{X'}$, respectively. Then the induced morphism $S_{\hX}\to S_{X'}$ is finite of degree $n$, and its differential at the origin has a $1$-dimensional kernel and cokernel.  
 \end{theorem}

This kind of  example is also relevant to the study of deformations of $\Q$-factorial terminal threefold singularities  such as the $A_{2n}$ singularities in dimension $3$. While this example is very specific, it helps to illustrate the difference between the image of $H^1(\hX;T_{\hX})$ and  the birationally invariant image of $H^1(X'; T_{X'})$.  It  is also interesting  from the perspective of the minimal model program. Generally speaking,  the analysis of this paper shadows the steps of the minimal program. Namely, Sections~\ref{Section1}--\ref{Section3} roughly parallel the fact that a canonical threefold singularity  has a partial crepant (divisorial) resolution with terminal singularities (\textit{cf.} \cite[Theorem 6.23]{KollarMori}). Similarly, Section~\ref{Section4} is the deformation-theoretic counterpart of the statement that a terminal singularity admits a small partial resolution to a terminal $\Q$-factorial singularity (\textit{cf.} \cite[Theorem 6.25]{KollarMori}), which cannot be further improved. There is however an important difference between the deformation-theoretic point of view and that of the minimal model program:   In our arguments we need the partial resolutions to be actual resolutions; \textit{i.e.} we only consider    crepant partial resolutions of a canonical singularity which are smooth,   not just terminal. Nonetheless, we believe that the discussion here captures some important general phenomena for these classes of singularities, which in turn will help to better understand the geometry of the moduli spaces of Calabi--Yau varieties, especially in dimension $3$. 

\subsection*{Notation and conventions} Throughout the paper, we work with the notation and assumptions made above:  $(X,x)$ denotes  the germ of an isolated  singularity,  $X$   is  a \textsl{good Stein representative} for the germ $(X,x)$, \textit{i.e.} a contractible Stein representative with a unique singular point $x$   (\textit{cf.}   \cite[Section~2]{Looijenga}), and $\pi\colon \hX\to X$ is a  \textsl{good resolution}; \textit{i.e.}  $\pi^{-1}(x) = E$ is a divisor with simple normal crossings. Unless otherwise specified, all singular cohomology (including local cohomology) is with $\Cee$-coefficients.  

\subsection*{Acknowledgement} 
We would   like to thank the referee for a careful reading of our paper, and for extensive comments which  helped us to improve it. We would also like to thank Paul Hacking for pointing out an error in a previous version of this paper.

\section{Deformation theory in the good crepant case}\label{Section1}

We begin with a general definition. 
 
\begin{definition}\label{defequi}  Let $\pi \colon \hX \to X$ be a resolution of   $X$. Then $\pi$ is \textsl{equivariant} if $R^0\pi_*T_{\hX}\cong T^0_X$. By \textit{e.g.} \cite[Lemma 3.1]{F}, a small resolution is equivariant. Note that, for $q> 0$, $R^q\pi_*T_{\hX}$ is a torsion sheaf supported on $x$.

The resolution $\pi \colon \hX \to X$ is
\begin{enumerate}[(1)]
\item a \textsl{good resolution}  (sometimes called a  \textsl{log resolution})  if $\pi^{-1}(x) = E$ is a divisor with simple normal crossings;  
\item  a \textsl{good equivariant resolution} if $\pi$ is good and equivariant (by resolution of singularities, good equivariant resolutions always exist); 
\item  \textsl{crepant} if $K_{\hX} = \pi^*\omega_X$ and hence $K_{\hX}\cong \scrO_{\hX}$, and is a \textsl{good crepant resolution} if $\pi$ is also a good resolution.  Thus, with these definitions, a small resolution is crepant but not a good crepant resolution.  (Note: If the isolated normal singularity $(X,x)$ admits a not necessarily good resolution $\pi\colon \hX \to X$ with $K_{\hX} \cong \scrO_{\hX}$, then $(X,x)$ is automatically a rational Gorenstein singularity; \textit{cf.} \cite[Corollary~11.9]{Kollar}). 
\end{enumerate}
\end{definition}

Given a good resolution $\pi\colon \hX \to X$,  let $E = \pi^{-1}(x) = \bigcup_{i=1}^rE_i$, where the $E_i$ are smooth divisors in~$\hX$.

\begin{proposition}\label{conj1} A crepant resolution is equivariant.
\end{proposition}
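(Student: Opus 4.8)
The plan is to separate the statement into an (always valid) injectivity part and a surjectivity part, and then to use the crepancy hypothesis to reduce surjectivity to the extension theorem for reflexive $(n-1)$-forms across the exceptional locus.

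\emph{Step 1: the canonical injection.} Applying $\mathcal{H}om_{\scrO_{\hX}}(-,\scrO_{\hX})$ to the natural map $\pi^*\Omega^1_X\to \Omega^1_{\hX}$ and then $R^0\pi_*$, and using $\pi_*\scrO_{\hX}=\scrO_X$ (valid since $X$ is normal), produces a canonical morphism $R^0\pi_*T_{\hX}\to \mathcal{H}om_{\scrO_X}(\Omega^1_X,\scrO_X)=T^0_X$. Both sheaves are torsion free — the source because $T_{\hX}$ is locally free and $\pi$ is dominant — they coincide on $U:=X\setminus\{x\}$ where $\pi$ is an isomorphism, and $T^0_X$ is reflexive. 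Since $\operatorname{codim}_X(X\setminus U)=n\ge 3$, this map is injective with cokernel supported at $x$. Thus the proposition is equivalent to the assertion that every vector field on $U$ extends to a regular vector field on $\hX$.

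\emph{Step 2: reduction via the contraction isomorphism.} Here crepancy enters. Since $X$ is Gorenstein, $\omega_X$ is invertible, and since $\pi$ is crepant, $\omega_{\hX}\cong \pi^*\omega_X$. Contraction against a local generator gives an isomorphism $T_{\hX}\xrightarrow{\ \sim\ }\Omega^{n-1}_{\hX}\otimes\omega_{\hX}^{\vee}\cong \Omega^{n-1}_{\hX}\otimes \pi^*(\omega_X^{\vee})$, so by the projection formula $R^0\pi_*T_{\hX}\cong (R^0\pi_*\Omega^{n-1}_{\hX})\otimes \omega_X^{\vee}$. On the other side, $T^0_X=(\Omega^1_X)^{\vee}$ and $\Omega^{[n-1]}_X\otimes\omega_X^{\vee}$ are reflexive sheaves agreeing on the smooth locus $U$, hence are equal. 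Cancelling $\omega_X^{\vee}$, and noting that under these identifications the map of Step 1 becomes the tautological inclusion, the proposition reduces to the equality $R^0\pi_*\Omega^{n-1}_{\hX}=\Omega^{[n-1]}_X$, i.e.\ to the statement that every reflexive $(n-1)$-form on $X$ pulls back to a holomorphic $(n-1)$-form on $\hX$ (the inclusion $\subseteq$ being automatic as in Step 1).

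\emph{Step 3: the extension of $(n-1)$-forms — the main obstacle.} This last equality is precisely the extension theorem for reflexive differential forms applied to the canonical (equivalently, isolated rational Gorenstein) singularity $X$; in the present isolated case it can also be obtained from the Hodge-theoretic machinery of \cite{FL}. I expect this to be where all the real work lies, and it is genuinely where the hypotheses are used: for a \emph{non}-crepant resolution, with $\omega_{\hX}=\pi^*\omega_X\otimes\scrO_{\hX}(D)$ and $D\ne 0$ effective, one instead obtains the proper subsheaf $\bigl(R^0\pi_*(\Omega^{n-1}_{\hX}\otimes\scrO_{\hX}(-D))\bigr)\otimes\omega_X^{\vee}\subsetneq T^0_X$, and indeed a vector field on $U$ need not extend — already blowing up a point on a smooth surface makes $\partial/\partial x$ acquire a first-order pole along the exceptional curve, since its flow does not preserve the center. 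Crepancy is exactly the condition that kills this discrepancy term. (One could alternatively attempt a direct local-cohomology argument, computing $\Coker\bigl(R^0\pi_*T_{\hX}\to T^0_X\bigr)$ by duality from $R^q\pi_*\scrO_{\hX}=0$ and $R^q\pi_*\omega_{\hX}=0$ for $q>0$, but I would expect this to amount to reproving the same $(n-1)$-form extension.)
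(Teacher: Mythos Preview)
Your proof is correct but follows a genuinely different route from the paper's.

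The paper argues by comparison with an auxiliary resolution. It first proves a birational lemma: given the crepant resolution $\hX$ and \emph{any} resolution $X'$, the induced birational map $X'\dashrightarrow \hX$ is a morphism off a closed subset $V\subseteq \hX$ of codimension $\ge 2$. (Reason: blow up $\hX$ further to dominate $X'$; since $K_{\hX}=0$, every new exceptional divisor has strictly positive discrepancy, whereas the exceptional divisors of $\widetilde{X}\to X'$ have nonnegative discrepancy over $X'$, forcing them to lie over $V$.) Now take $X'$ to be a good equivariant resolution, which always exists: a section of $T^0_X$ lifts to $T_{X'}$, hence gives a section of $T_{\hX}$ on $\hX\setminus V$, and extends across $V$ by Hartogs.

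Your approach instead uses crepancy to turn the question into the extension theorem for reflexive $(n-1)$-forms on a canonical singularity, which you then cite. This is clean and conceptual---it makes transparent exactly where crepancy enters, via the contraction isomorphism---but it outsources the substantive step to a result (Flenner for isolated singularities, or Greb--Kebekus--Kov\'acs--Peternell in general) that is arguably deeper than the proposition itself. The paper's argument, by contrast, is essentially self-contained modulo the existence of equivariant resolutions and Hartogs, and as a byproduct yields the useful fact (their Remark after the lemma) that any two crepant resolutions are isomorphic in codimension one.
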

\begin{proof}
We begin by showing the following. 

\begin{lemma}\label{conj2} Let $\pi \colon \hX \to X$  be a crepant resolution, and let $\pi'\colon X' \to X$ be an arbitrary resolution. Then there exist a closed analytic subset $\widehat{V}$ of\, $\hX$ of codimension at least $2$ and  a proper analytic subset $V'$ of\, $X'$ such that the birational map $X'\dasharrow \hX$ restricts on $X'-V'$ to a surjective  morphism $\nu\colon X'-V' \to \hX - \widehat{V}$.
\end{lemma}
\begin{proof} By Hironaka's theorem, there is a blowup $f\colon \widetilde{X}\to\hX$ which dominates $X'$. We can further assume that all centers of blowups lie over the inverse image of the singular point $x$.  Let $\widehat{V}$ be the image in $\hX$ of the centers of the blowups; hence $f$ is an isomorphism from $\widetilde{X} -f^{-1}(\widehat{V})$ to $\hX -V$.    Moreover, $K_{\widetilde{X}} = \scrO_{\widetilde{X}}(G)$, where $G = \sum_in_iG_i$ is a divisor with $n_i > 0$ for all $i$ such that $f(G) \subseteq \widehat{V}$. Since $K_{X'}\cong \scrO_{X'}(D)$ for an effective divisor $D$ whose image in $X$ is the point $x$, it follows easily that all of the exceptional divisors for the morphism $\widetilde{X} \to X'$ are of the form $G_i$ for some $i$. Thus, if $V'$ is the closure of the union of the images of the $G_i$ which are exceptional for the morphism $\widetilde{X} \to X'$, there is a surjective morphism $X'-V' \to \hX - \widehat{V}$.
\end{proof}

\begin{remark}\label{crepremark} The argument of Lemma~\ref{conj2} proves the standard fact that if in addition $X'$ is also a crepant resolution of $X$, then $X'$ and $\hX$ are isomorphic outside a set of  codimension $2$.
\end{remark}

Continuing with the proof of Proposition~\ref{conj1}, we must show that the natural injective map $R^0\pi_*T_{\hX}\to T^0_X$ is surjective. Choose an equivariant resolution $\pi'\colon X'\to X$. If $\xi$ is a local section of $T^0_X$, then $\xi$ lifts to a section of $T_{X'}$ over the inverse image of the open set and thus defines a section of  $T_{\hX}$ over the complement of $V$ in the notation of Lemma~\ref{conj2}. By Hartogs, this section extends to a section $\hat\xi$ of $T_{\hX}$. The image of $\hat\xi$ in $T^0_X$ is then $\xi$. Thus $R^0\pi_*T_{\hX}\to T^0_X$ is surjective and therefore an isomorphism.
\end{proof}

Now suppose that  $\hX$ is   a good, not necessarily crepant resolution of  $X$. There is an exact sequence
$$0 \lra T_{\hX}(-\log E) \lra T_{\hX} \lra \bigoplus_iN_{E_i/\hX} \lra 0.$$
Here $H^1(\hX;T_{\hX}(-\log E))$ is the tangent space to the functor of deformations of $\hX$ keeping the divisors $E_i$. Let  $\mathcal{D}^\bullet$ be  the complex given by
$$T_{\hX} \lra N_{E/\hX}$$
(in degrees $0$, $1$, respectively). Then $\mathbb{H}^1(\hX; \mathcal{D}^\bullet)$ is the tangent space to deformations of $\hX$ keeping the divisor $E$ (as an effective Cartier divisor). Also, let $\mathcal{C}^\bullet$ be the complex
$$T_{\hX}|E \lra N_{E/\hX}.$$
Thus $\mathcal{C}^\bullet$ is the dual complex to the complex $I_E/I_E^2\to \Omega^1_{\hX}|E$, which is quasi-isomorphic to $\Omega^1_E$,  the sheaf of K\"ahler differentials on $E$. It follows that $\mathcal{H}^i(\mathcal{C}^\bullet)= T^i_E$, $i=0,1$, and $\mathbb{H}^i(E; \mathcal{C}^\bullet)=\Ext^i(\Omega^1_E, \scrO_E) = \mathbb{T}^i_E$.  There is a  commutative diagram
$$\begin{CD}
@. 0 @. 0 @. @. @.\\
@. @VVV @VVV @. @. @.\\
@. T_{\hX}(-E) @= T_{\hX}(-E) @. @. @.\\
@. @VVV @VVV @. @. @.\\
0@>>> T_{\hX}(-\log E) @>>> T_{\hX} @>>>\bigoplus_iN_{E_i/\hX} @>>> 0 @.\\
@. @VVV @VVV @VVV @. @.\\
0@>>> T^0_E @>>> T_{\hX}|E @>>>N_{E/\hX} @>>> T^1_E @>>> 0\\
@. @VVV @VVV @. @. @.\\
@. 0 @. 0\rlap{.} @. @. @.
\end{CD}$$
In particular, there is always an exact sequence
$$0 \lra \bigoplus_iN_{E_i/\hX} \lra N_{E/\hX} \lra  T^1_E\lra 0.$$
Moreover,  $\mathcal{H}^0(\mathcal{D}^\bullet)= T_{\hX}(-\log E)$ and  $\mathcal{H}^1(\mathcal{D}^\bullet)= \mathcal{H}^1(\mathcal{C}^\bullet)= T^1_E$.
Also, from the hypercohomology spectral sequences, there are exact sequences
\begin{gather*}
H^0\left(\hX;T_{\hX}\right) \lra H^0\left(E; N_{E/\hX}\right) \lra \mathbb{H}^1\left(\hX; \mathcal{D}^\bullet\right) \lra H^1\left(\hX;T_{\hX}\right) \lra H^1\left(E; N_{E/\hX}\right), \\
H^1\left(\hX; T_{\hX}\left(-\log E\right)\right) \lra \mathbb{H}^1\left(\hX; \mathcal{D}^\bullet\right) \lra H^0\left(E;T^1_E\right) \lra H^2\left(\hX; T_{\hX}\left(-\log E\right)\right) 
\end{gather*}
as well as the usual exact sequences
\begin{gather*}
H^0\left(E;T_{\hX}|E\right) \lra H^0\left(E; N_{E/\hX}\right) \lra \mathbb{T}^1_E \lra H^1\left(E;T_{\hX}|E\right) \lra H^1\left(E; N_{E/\hX}\right), \\
0\lra H^1\left(E; T^0_E\right) \lra \mathbb{T}^1_E \lra H^0\left(E;T^1_E\right) \lra H^2\left(E; T^0_E\right) \lra \mathbb{T}^2_E \lra H^1\left(E;T^1_E\right) \lra 0.
\end{gather*}
Most of the homomorphisms in the above exact sequences have a geometric meaning.

Now suppose $\hX$ is a good crepant resolution of   $(X,x)$. Then   $T_{\hX}(-\log E)$ is isomorphic to $\Omega^{n-1}_{\hX}(\log E) (-E)$, so, by the vanishing theorem of  Guill\'en, Navarro Aznar, Pascual Gainza, Puerta, and Steenbrink (see \textit{e.g.}  \cite[Proof of Proposition~7.30 (b$’$), p.~181]{PS}), for $p\geq 2$, 
$$H^p\left(\hX; T_{\hX}(-\log E)\right)  = H^p\left(\hX; \Omega^{n-1}_{\hX}(\log E) (-E)\right)=0.$$ 
Also,  $N_{E_i/\hX} =\omega_{E_i} = K_{E_i}$ and $N_{E/\hX} =\omega_E$. Moreover, as previously noted, $\mathbf{Def}_{\hX}$ is unobstructed.

Finally, we have the following result of Steenbrink; see \cite[Lemma 2.14]{Steenbrink}. 

\begin{theorem}[Steenbrink]\label{Steenlemma} In the above notation, if $X$ has an isolated rational singularity, then $H^i(E; \scrO_E) =0$ for $i > 0$. 
\end{theorem}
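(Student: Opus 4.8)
The statement to prove is that if $(X,x)$ is an isolated rational singularity and $\pi\colon \hX \to X$ is a good (log) resolution with exceptional divisor $E = \pi^{-1}(x)$, then $H^i(E;\scrO_E) = 0$ for $i > 0$. My plan is to deduce this from the very definition of rationality together with the theorem on formal functions (or its analytic Stein analogue). By definition of a rational singularity, $R^q\pi_*\scrO_{\hX} = 0$ for $q > 0$; since $X$ is Stein (a good Stein representative), this gives $H^q(\hX;\scrO_{\hX}) = 0$ for $q > 0$ as well, while $H^0(\hX;\scrO_{\hX}) = \scrO_X$ since $X$ is normal. The idea is to compare $H^i(E;\scrO_E)$ with the cohomology of the formal (or analytic) completion of $\hX$ along $E$, which in turn is controlled by $H^i(\hX;\scrO_{\hX})$.

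First I would set up the thickenings: let $E_m$ denote the $m$-th infinitesimal neighborhood of $E$ in $\hX$, i.e.\ the subscheme defined by $\mathcal{I}_E^m$ where $\mathcal{I}_E = \scrO_{\hX}(-E)$. For each $m$ there is an exact sequence
$$0 \to \mathcal{I}_E^m/\mathcal{I}_E^{m+1} \to \scrO_{E_{m+1}} \to \scrO_{E_m} \to 0,$$
and $\mathcal{I}_E^m/\mathcal{I}_E^{m+1} = \operatorname{Sym}^m(\mathcal{I}_E/\mathcal{I}_E^2)$ is a line bundle on $E$ of the form $\scrO_{\hX}(-mE)|_E$ (or a direct sum over components, suitably interpreted). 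The second step is to show that $H^i(E;\scrO_{\hX}(-mE)|_E) = 0$ for $i > 0$ and $m \geq 1$. This is where the negativity of $E$ enters: since $\pi$ contracts $E$ to a point, $-E$ is $\pi$-nef and $\pi$-big on the exceptional locus, so a relative Kawamata–Viehweg / Grauert–Riemenschneider type vanishing, or more directly the rationality hypothesis applied to the ideal sheaf filtration, forces these cohomologies to vanish. Concretely, one can instead argue: the exact sequences $0 \to \mathcal{I}_E^{m+1} \to \mathcal{I}_E^m \to \mathcal{I}_E^m/\mathcal{I}_E^{m+1}\to 0$ together with $R^q\pi_*\mathcal{I}_E^m = 0$ for $q>0$ and all $m$ (which follows from rationality plus the fact that $\mathcal{I}_E = \scrO_{\hX}(-E)$ is $\pi$-ample, hence $R^q\pi_*\mathcal{I}_E^m=0$ for $m\gg 0$, and then descending induction) give the vanishing of $H^i(E;\mathcal{I}_E^m/\mathcal{I}_E^{m+1})$ for $i>0$.

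Third, feed this back into the thickening sequences: by induction on $m$, using $H^i(E_m;\scrO_{E_m}) = 0$ and the vanishing of the graded pieces, one gets $H^i(E_m;\scrO_{E_m}) = 0$ for all $m$ and all $i > 0$. Finally, since $E$ is proper, the theorem on formal functions (or, in the analytic Stein setting, the comparison between cohomology on $E$ and on a Stein neighborhood of $E$, using that $\pi_*\scrO_{\hX} = \scrO_X$ and $R^q\pi_*\scrO_{\hX}=0$) identifies $\varprojlim_m H^i(E_m;\scrO_{E_m})$ with the completion of $(R^i\pi_*\scrO_{\hX})_x = 0$; but the inverse system is already zero, and one concludes $H^i(E;\scrO_E) = H^i(E_1;\scrO_{E_1}) = 0$ for $i > 0$ directly from the $m=1$ case established in the induction. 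Alternatively, and perhaps most cleanly, one runs the induction in the other direction: the surjection $\scrO_{E_m}\twoheadrightarrow \scrO_E$ on $H^i$ combined with injectivity of $H^i(E;\mathcal{I}_E/\mathcal{I}_E^2 \otimes \cdots)\hookrightarrow\cdots$ isolates $H^i(E;\scrO_E)$ as a subquotient of $\varprojlim H^i(E_m;\scrO_{E_m})=0$.

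The main obstacle is the vanishing $H^i(E;\scrO_{\hX}(-mE)|_E)=0$ for $i>0$, $m\geq 1$ — equivalently the vanishing $R^q\pi_*\mathcal{I}_E^m = 0$ for $q>0$ and all $m\geq 0$. This is not literally the definition of rationality (which only gives $m=0$), but follows from it: since $\scrO_{\hX}(-E)$ is relatively ample for $\pi$ (as $E$ is contracted to a point and $X$ is affine/Stein), Serre vanishing gives $R^q\pi_*\mathcal{I}_E^m = 0$ for $q>0$ and $m\gg 0$, and then the short exact sequences $0\to \mathcal{I}_E^{m+1}\to\mathcal{I}_E^m\to \mathcal{I}_E^m/\mathcal{I}_E^{m+1}\to 0$ together with descending induction — noting $R^q\pi_*(\mathcal{I}_E^m/\mathcal{I}_E^{m+1})$ is supported at $x$ and fits between $R^q\pi_*\mathcal{I}_E^m$ and $R^{q+1}\pi_*\mathcal{I}_E^{m+1}$ — propagate the vanishing down to $m=0$, and in particular give what we need for each graded piece. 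Everything else is a formal spectral-sequence or long-exact-sequence bookkeeping argument; I expect the write-up to be short once this vanishing is in hand.
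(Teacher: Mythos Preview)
The paper does not prove this statement; it is quoted from \cite[Lemma 2.14]{Steenbrink} and closed with a \qed. So there is nothing in the paper to compare against, and the question is simply whether your independent argument is correct.

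It has a genuine gap. The assertion that $\scrO_{\hX}(-E)$ is $\pi$-ample ``as $E$ is contracted to a point'' is false for an arbitrary good resolution: only some effective combination $-\sum a_iE_i$ with suitable $a_i>0$ is $\pi$-ample, not necessarily the reduced $-E$. For a concrete failure already in dimension $2$, take any rational surface singularity, resolve it, and then blow up a node of the exceptional curve; the new $(-1)$-curve $E_k$ meets two other components, so $(-E)\cdot E_k=-1$, hence $\scrO_E(-2E)|_{E_k}\cong\scrO_{\Pee^1}(-2)$, $H^1(E;\scrO_E(-2E))\neq 0$, and in fact $R^1\pi_*\scrO_{\hX}(-2E)\neq 0$. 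So both your Serre-vanishing base case and the intermediate claim $H^i(E;\scrO_E(-mE))=0$ can fail. Moreover, your descending induction is circular as written: assuming $R^q\pi_*\mathcal{I}_E^{m+1}=0$ for all $q>0$, the long exact sequence only yields $R^q\pi_*\mathcal{I}_E^m\cong H^q(E;\scrO_E(-mE))$, which is exactly what you were trying to show vanishes. Steenbrink's actual proof is Hodge-theoretic and avoids this entirely: since $E$ is SNC (hence Du Bois) and proper, $H^i(E;\Cee)\to H^i(E;\scrO_E)$ is surjective; but $E$ is a deformation retract of $\hX$, so this surjection factors through $H^i(\hX;\scrO_{\hX})$, which vanishes for $i>0$ by rationality.
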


With these preliminaries, we turn now to the main result of this section. 

\begin{theorem}\label{goodcrepdef} Let $\pi\colon \hX \to X$ be a good crepant resolution of the isolated rational singularity $X$, with $n=\dim X \geq 3$.
\begin{enumerate}
\item\label{goodcrepdef-1} We have $\mathbb{H}^1(\hX; \mathcal{D}^\bullet) \cong H^1(\hX;T_{\hX})$. In other words, the first-order deformations of\, $\hX$ are exactly the first-order deformations keeping the effective divisor $E$.
\item\label{goodcrepdef-2} We have $H^0(E;T^1_E) \cong \bigoplus_iH^1(E_i;N_{E_i/\hX}) = \bigoplus_iH^1(E_i; \omega_{E_i})$. Thus $H^0(E;T^1_E) = 0$ if and only if   $h^{0,n-2}(E_i) = 0$ for every $i$. $($The condition  $H^0(E;T^1_E)=0$ is the condition that all first-order deformations of\, $\hX$ induce locally trivial first-order deformations of\, $E$.$)$ More generally, $\dim H^0(E;T^1_E) =\sum_ih^{0,n-2}(E_i)$.
\item\label{goodcrepdef-3} If $\dim X = 3$, then $H^1(E;T^1_E)$ has dimension $r-1$, where $r$ is the number of components of\, $E$, and is more intrinsically dual to the cokernel of\,  $ H^0(E;\scrO_E) \to \bigoplus_iH^0(E_i; \scrO_{E_i})$. If $\dim X > 3$, then $\dim H^1(E;T^1_E) =\sum_i\dim H^{n-3}(E_i; \scrO_{E_i})$.
\item\label{goodcrepdef-4} We have $H^2(\hX; T_{\hX}) \cong \bigoplus_iH^2(E_i;N_{E_i/\hX})$, and if $H^3(\hX; T_{\hX}(-E) ) =0$ $($which is always satisfied if $\dim X =3)$,   then $H^2(E; T^0_E) = 0$, \textit{i.e.} all locally trivial first-order deformations of\, $E$ are unobstructed. Thus, in this case, 
$$\dim \mathbb{T}^2_E =\dim  H^1\left(E;T^1_E\right) =\begin{cases} r-1 &\text{if $\dim X = 3$},\\
\sum_i\dim H^{n-3}(E_i; \scrO_{E_i}) &\text{if $\dim X > 3$}.
\end{cases}$$
\item\label{goodcrepdef-5} The maps $\mathbb{T}^1_E \to H^0(E;T^1_E)$ and $H^1(\hX;T_{\hX}) \to H^0(E;T^1_E)$ are surjective. In particular, all first-order smoothing directions for $E$ are realized via first-order deformations of\, $\hX$.
\item\label{goodcrepdef-6} The  map $H^1(\hX;T_{\hX}) \to \mathbb{T}^1_E$ is surjective if and only if  $H^1(\hX; T_{\hX}(-\log E)) \to H^1(E;T^0_E)$ is surjective, which holds if and only if $H^2(\hX; T_{\hX}(- E)) =0$. In this case, all first-order locally trivial deformations of\, $E$ are realized by first-order deformations of\,  $\hX$ and the divisors $E_1, \dots, E_r$.  
\end{enumerate}
\end{theorem}
\begin{proof} \eqref{goodcrepdef-1}~ By adjunction and Serre duality, $H^0(E; N_{E/\hX}) =H^0(E; \omega_E)$  is dual to $H^{n-1}(E;\scrO_E) = 0$. Likewise, $H^1(E; N_{E/\hX}) =H^1(E; \omega_E)$  is dual to $H^{n-2}(E;\scrO_E) = 0$. Hence $\mathbb{H}^1(\hX; \mathcal{D}^\bullet) \cong H^1(\hX;T_{\hX})$.

\smallskip
\eqref{goodcrepdef-2}~ There is an exact sequence
$$H^0\left(E; N_{E/\hX}\right) \lra H^0\left(E;T^1_E\right) \lra \bigoplus_iH^1\left(E_i;N_{E_i/\hX}\right) \lra H^1\left(E; N_{E/\hX}\right).$$
As in \eqref{goodcrepdef-1},  $H^0(E; N_{E/\hX}) =   H^1(E; N_{E/\hX}) =0$. Thus  $H^0(E;T^1_E) \cong \bigoplus_iH^1(E_i; N_{E_i/\hX}) $. But $H^1(E_i; N_{E_i/\hX}) = H^1(E_i;\omega_{E_i})$ is Serre dual to $H^{n-2}(E_i;\scrO_{E_i})$  and therefore  has dimension $h^{0,n-2}(E_i)$. In particular, $H^0(E;T^1_E) = 0$ if and only if  $h^{0,n-2}(E_i) =0$ for all $i$.  

\smallskip
\eqref{goodcrepdef-3}~ Continuing with the above exact sequence, we have
$$0= H^1\left(E; N_{E/\hX}\right) \lra H^1\left(E;T^1_E\right) \lra \bigoplus_iH^2\left(E_i; N_{E_i/\hX}\right) \lra H^2\left(E; N_{E/\hX}\right) \lra 0.$$
If $\dim X = 3$, then   $ H^1(E;T^1_E)$ is dual to the cokernel of $H^0(E; \scrO_E) \to \bigoplus_iH^0(E_i; \scrO_{E_i}) $ and hence has dimension $r-1$. If $\dim X > 3$, then $H^2(E_i; N_{E_i/\hX}) = H^2(E_i;\omega_{E_i})$ is Serre dual to $H^{n-3}(E_i;\scrO_{E_i})$ and $H^2(E; N_{E/\hX})$ is Serre dual to $H^{n-3}(E; \scrO_E) = 0$. Thus $\dim H^1(E;T^1_E) =\sum_i\dim H^{n-3}(E_i; \scrO_{E_i})$ if $\dim X > 3$.

\smallskip
\eqref{goodcrepdef-4}~ The statement about $H^2(\hX; T_{\hX})$ follows from the exact sequence
$$H^2\left(\hX; T_{\hX}(-\log E)\right)= 0 \lra H^2\left(\hX; T_{\hX}\right) \lra \bigoplus_iH^2\left(E_i; N_{E_i/\hX}\right) \lra H^3\left(\hX; T_{\hX}(-\log E)\right)= 0 .$$
The long exact sequence associated to
$$0\lra T_{\hX}(-E) \lra T_{\hX}(-\log E) \lra T^0_E \lra 0$$
gives rise to an exact sequence
$$H^2\left(\hX; T_{\hX}(-\log E)\right) \lra H^2\left(E; T^0_E\right) \to H^3\left(\hX; T_{\hX}(-E) \right).$$
As we have seen, $H^2(\hX; T_{\hX}(-\log E))=0$, and $H^3(\hX; T_{\hX}(-E) ) =0$ if $\dim X =3$ for dimension reasons. Thus $H^2(E; T^0_E) = 0$.

\smallskip
\eqref{goodcrepdef-5}~ There is a commutative diagram
$$\begin{CD}
0 @>>> H^1\left(\hX; T_{\hX}(-\log E)\right)  @>>> H^1\left(\hX; T_{\hX}\right) @>>>  \bigoplus_iH^1\left(N_{E_i/\hX}\right) @>>> 0\\
@. @VVV @VVV @VV{\cong}V @.\\
0 @>>> H^1\left(E; T^0_E\right) @>>> \mathbb{T}^1_E @>>> H^0\left(E;T^1_E\right) @>>> H^2\left(E;T^0_E\right).
\end{CD}$$
(Here, the top right  arrow is surjective because $H^2(\hX; T_{\hX}(-\log E))=0$.) Thus the induced map $H^1(\hX;T_{\hX}) \to H^0(E;T^1_E)$ is surjective, and therefore the map $\mathbb{T}^1_E \to H^0(E;T^1_E)$ is surjective as well.

\smallskip
\eqref{goodcrepdef-6}~  From the diagram in \eqref{goodcrepdef-5}, the   map $H^1(\hX;T_{\hX}) \to \mathbb{T}^1_E$ is surjective if and only if the map  $H^1(\hX; T_{\hX}(-\log E)) \to H^1(E;T^0_E)$ is surjective.  Since  $H^2(\hX; T_{\hX}(- \log E))=0$, the cokernel of   $H^1(\hX; T_{\hX}(-\log E)) \to H^1(E; T^0_E)$ is $H^2(\hX; T_{\hX}(- E))$. Thus $H^1(\hX;T_{\hX}) \to \mathbb{T}^1_E$ is surjective if and only if $H^2(\hX; T_{\hX}(- E))=0$.
\end{proof} 

\begin{remark}\label{rem-1.7}\leavevmode
  \begin{enumerate}[wide]
    \item  In the situation of \eqref{goodcrepdef-5}, it follows from  \cite[Theorem 2.1(iii)]{FL} that, for a  crepant isolated rational singularity,
$$H^1\left(\hX; T_{\hX}\right) \cong H^1\left(\hX;\Omega^{n-1}_{\hX}\right) \cong H^1\left(\hX;\Omega^{n-1}_{\hX}(\log E)(-E)\right)\oplus H^1_E\left(\hX;\Omega^{n-1}_{\hX}\right).$$
Moreover, $H^1(\hX;\Omega^{n-1}_{\hX}(\log E)(-E)) \cong H^1(\hX; T_{\hX}(-\log E))$. The induced map
$$H^1_E\left(\hX;\Omega^{n-1}_{\hX}\right) \lra H^1\left(\hX;\Omega^{n-1}_{\hX}\right) \lra H^1\left(E;\Omega^{n-1}_E/\tau^{n-1}_E\right) \cong \bigoplus _iH^1\left(E_i; \omega_{E_i}\right),$$
is an isomorphism, and thus by \eqref{goodcrepdef-5} there is a splitting
$$H^1\left(\hX; T_{\hX}\right) \cong H^1\left(\hX; T_{\hX}(-\log E)\right) \oplus H^0\left(E; T^1_E\right).$$
Note that, while $H^1(\hX; T_{\hX}(-\log E))$ and $H^1(\hX; T_{\hX})$ have the same image in $H^0(X; T^1_X)$, this is just a statement about the differential of the corresponding morphism of deformation functors $\mathbf{Def}_{\hX}\to \mathbf{Def}_X$, and it is reasonable to ask if the actual morphism of deformation functors is finite (meaning that the corresponding morphism  of the analytic germs which prorepresent them is finite).
 
\item If $\dim X =3$ and $E$ is smooth,  we will show in the next section that $H^1(\hX;T_{\hX}) \to \mathbb{T}^1_E$ is surjective.   However, it is not in general an isomorphism, for example in case  $X$ is an $O_{16}$ singularity, the cone over a smooth cubic surface $E$. In this case, a  calculation shows that $\dim H^0(X;T^1_X) = 16$ and that $\ell = 6$, where $\ell=\ell^{1,1}$ is the link invariant  of \cite{SteenbrinkDB}. By \cite{Wahl},  $H^0(X; R^1\pi_*T_{\hX})\cong H^1(\hX; \Omega^2_{\hX})$ is   the nonnegative part of the deformations of $O_{16}$ and is easily computed to have dimension $5$. 
But $\dim H^1(E; T_E) = 4$, so that $H^0(X; R^1\pi_*T_{\hX}) \to   H^1(E; T_E)$ is not injective. Here, the   weight zero piece of $H^0(X;T^1_X)$ has dimension $4$ and corresponds to deformations of the cubic surface,  hence gives all of the first-order deformations of $E$. However,  there is also a weight $1$ piece of dimension $1$. Starting with the cone over the Fermat cubic surface $E$, for example, the general weight $1$ deformation is given by $f_t = x^3 + y^3 + z^3 + w^3 + txyzw$, defining the singularity $X_t$. A calculation shows that, for $t\neq 0$,  $\dim H^0(X;T^1_{X_t}) = 15$, and hence that the invariant $a$ of \cite[Theorem 2.1(iv)]{FL} is not $0$ in this case. In fact, $a=1$, and the spectral sequence with $E_1$ page $E_1^{p,q}=H^q_E(\hX_t; \Omega^p_{\hX_t}) \Rightarrow H^{p+q}_E(\hX_t)$ does not degenerate at $E_2$ for $t\neq 0$. 

Finally, if $E$ is not smooth, then $H^1(\hX;T_{\hX}) \to \mathbb{T}^1_E$ typically fails to be surjective. We will give a geometric explanation for this failure in the next section.
\end{enumerate}
\end{remark}

\section{The good crepant case: Some classes of examples}\label{Section2}

In this section,  $\pi\colon \hX \to X$ denotes a good crepant resolution of the  rational, Gorenstein, isolated  singularity $X$, with $\dim X = 3$. Denote the exceptional divisor by $E =\pi^{-1}(x) = \bigcup_{i\geq 1}E_i$. We fix the following notation: $D_{ij} = E_i\cap E_j$, $D_i= E_i\cap \left(\bigcup_{j\neq i}E_j\right)=  \bigcup_{j\neq i} D_{ij}$, and $D =  \bigcup_iD_i  = E_{\text{sing}}$. Our goal is to describe in more detail the case where $E$ looks like a $K$-trivial semistable degeneration of $K3$ surfaces with one component supporting a negative-definite anticanonical divisor removed, or more generally a $K$-trivial semistable resolution of a one-parameter smoothing of a simple elliptic or cusp singularity, minus the component containing the resolution of the germ of the singularity.  Note that by \cite{FriedmanMiranda, Engel, FriedmanEngel}, there is a procedure for generating all such examples. There is also an easy criterion for determining the multiplicity and hence for deciding when such examples are good crepant resolutions of complete intersection or even hypersurface singularities. 

 Before we begin our discussion, we record the following, which holds for an isolated rational singularity $X$ of dimension $3$, not necessarily crepant. 
 
 \begin{proposition}\label{3dimtop}  Let $\pi \colon \hX \to X$ be a good resolution of the isolated rational singularity $(X,x)$ of dimension~$3$, with exceptional divisor $E =\pi^{-1}(x) = \bigcup_{i\geq 1}E_i$, $D_{ij} = E_i\cap E_j$, and triple points $t_{ijk}$.
 \begin{enumerate}
 \item\label{3dimtop-1} For $i> 0$, $H^i(E; \scrO_E) = 0$ and $H^i(|\Gamma|) = 0$ for $i>0$, where $\Gamma$ is the dual complex of\, $E$.
 \item\label{3dimtop-2} The two homomorphisms
 $\bigoplus _iH^1(E_i; \scrO_{E_i})  \to \bigoplus _{i<j}H^1(D_{ij}; \scrO_{D_{ij}})$ and $\bigoplus _iH^0(E_i; \Omega^1_{E_i})   \to \bigoplus _{i<j}H^0(D_{ij}; \Omega^1_{D_{ij}})$ are isomorphisms. 
 \item\label{3dimtop-3} Let $\Omega^1_E$ be the sheaf of K\"ahler differentials and $\tau^1_E$ the subsheaf of torsion differentials. Then $H^0(E; \Omega^1_E/\tau^1_E)  =0$ and $H^1(E) =0$.
 \item\label{3dimtop-4} The map $\bigoplus_iH^2(E_i) \to \bigoplus_{i<j}H^2(D_{ij})$ is surjective, and its kernel is $H^2(E)$. Hence  $b_2(E) = \sum_{i=1}^rb_2(E_i) - \#\{\text{\rm{double curves}}\}$. 
 \item\label{3dimtop-5}  Let  $L$ be the link of the singularity $(X,x)$, and let $\ell $ be the dimension of\, $H^2(L)$ or equivalently $H^3(L)$. Then $H^2(L)$ is a pure Hodge structure and  $H^2(L) = H^{1,1}(L)$, so that $\ell = \ell^{1,1}$, where $\ell^{1,1}$ is the link invariant of\, \cite{SteenbrinkDB}. Finally,
 $$\ell = b_2(E)-r=\sum_ib_2(E_i) -  r- \#\{\text{\rm{double curves}}\} = \sum_i(b_2(E_i)-1) -   \#\{\text{\rm{double curves}}\}.$$ 
 \end{enumerate}
 \end{proposition}
 \begin{proof} We shall just sketch the proof. By Theorem~\ref{Steenlemma}, $H^i(E; \scrO_E) = 0$ for $i> 0$. The weight spectral sequence for $E$ degenerates at $E_2$, and hence so does the Mayer--Vietoris spectral sequence for $\scrO_E$. This degeneration, along with the fact that $H^i(E; \scrO_E) = 0$, $i=1,2$, implies that $H^i(|\Gamma|) = 0$ for $i>0$ and that $\bigoplus _iH^1(E_i; \scrO_{E_i})  \to \bigoplus _{i<j}H^1(D_{ij}; \scrO_{D_{ij}})$ is an isomorphism. Then $\bigoplus _iH^0(E_i; \Omega^1_{E_i})   \to \bigoplus _{i<j}H^0(D_{ij}; \Omega^1_{D_{ij}})$ is an isomorphism as well, as follows by taking complex conjugation. This proves \eqref{3dimtop-1} and \eqref{3dimtop-2}. There is an exact sequence 
 $$0 \lra \Omega^1_E/\tau^1_E \lra \bigoplus_i\Omega^1_{E_i} \lra \bigoplus_{i<j}\Omega^1_{D_{ij}} \lra 0.$$
 Thus, by \eqref{3dimtop-2},  $H^0(E; \Omega^1_E/\tau^1_E)  =0$. There is a spectral sequence converging to  $H^{p+q}(E)$ with $E_1^{p,q} = H^q(E; \Omega^p_E/\tau^p_E)$. As $H^0(E; \Omega^1_E/\tau^1_E)   = H^1(E;\scrO_E) =0$, $H^1(E) =0$, proving \eqref{3dimtop-3}. The semipurity theorem (see \textit{e.g.} \cite[Corollary 1.12]{Steenbrink}) implies that the mixed Hodge structure on $H^k(E)$ is pure for $k\geq 3$. An examination of the weight spectral sequence shows that this forces the map $\bigoplus_iH^2(E_i) \to \bigoplus_{i<j}H^2(D_{ij})$ to be surjective and identifies $H^2(E)$ with its kernel. Thus 
 $b_2(E) = \sum_{i=1}^rb_2(E_i) - \#\{\text{double curves}\}$, which is \eqref{3dimtop-4}. By the link exact sequence and semipurity (see \cite[Corollary 1.12]{Steenbrink} again), there is an exact sequence of mixed Hodge structures
 $$0 \lra H_E^2\left(\hX\right)\lra H^2(E) \lra H^2(L) \lra 0,$$
 where as in the statement of  \eqref{3dimtop-5},  $L$ is the link of the isolated singularity. 
 By duality, $\dim H_E^2(\hX) = \dim H^4(E) = \sum _iH^4(E_i) = r$. Thus
 $$\ell = \dim H^2(L) = b_2(E) - r= \sum_{i=1}^rb_2(E_i) - \#\{\text{double curves}\} -r,$$
 using \eqref{3dimtop-4}. The above exact sequence also shows that $H^2(L)$ is a pure Hodge structure and that $H^2(L) = H^{1,1}(L)$, hence $\ell =\ell^{1,1}$.  This establishes all of the statements of \eqref{3dimtop-5}. 
 \end{proof}

\begin{remark}  Kawamata \cite[Section~1, p.\ 97]{Kawamata1988} 
  introduced an invariant $\sigma(Y)$, the \textsl{defect} of $Y$, for a normal projective variety measuring the failure of $\Q$-factoriality for $Y$. For a rational singularity $(X,x)$ in dimension $3$, there is a local analogue $\sigma(x)=\dim H^2(\hX)/\sum_i\Cee[E_i]$ defined by Namikawa--Steenbrink; \textit{cf.} \cite[Equation~(3.9)]{NS}.
  It is immediate to see that $\sigma(x)=\ell$ via the link exact sequence
$$\begin{CD}
0 @>>> H^2_E(\hX) @>>> H^2(E) @>>>  H^2(L) @>>> 0 \\
@. @VV{\cong}V  @VV{\cong}V @. @.\\
@. \bigoplus_iH_4(E_i) @>>> H^2(\hX)\rlap{.} @. @.
\end{CD}$$
The relationship between the local and global invariants is as follows. If $Y$ is a compact complex threefold with isolated rational singularities, for each $y\in Y_{\text{sing}}$, we have the link $L_y$ and the local invariant $\ell_y =\dim H^2(L_y) = \dim H^3(L_y)$. Let $T$ be the kernel of the natural map $\bigoplus_{y\in Y_{\text{sing}}}H^3(L_y) \to H^4(Y)$, and let $\sigma(Y)$ be the dimension of the image of $\bigoplus_{y\in Y_{\text{sing}}}H^3(L_y) \to H^4(Y)$. Thus clearly 
$$\dim T = \sum_{y\in Y_{\text{sing}}}\ell_y - \sigma(Y).$$
A straightforward argument with Mayer--Vietoris and semipurity shows that
$$\sigma(Y) = b_4(Y) -b_2(Y).$$
Thus the defect measures the extent to which Poincar\'e duality fails on $Y$.
\end{remark}

For the remainder of this section, we return to the case where $X$ has a crepant resolution $\pi\colon \hX\to X$. Following Reid \cite[Definition 2.5]{Reidcanon}, we make the following definition. 

\begin{definition}\label{definegeneral}  Let $(X, x)$ be the germ of an isolated singularity. We say that   \textsl{Property P holds for a general $t\in \mathfrak{m}_x$, or for the general hypersurface section of\, $X$ defined by $t$}, if there exists a finite-dimensional subspace $V$ of $\mathfrak{m}_x$, mapping surjectively onto $\mathfrak{m}_x/\mathfrak{m}_x^2$, such that Property P holds for all $t$ in a nonempty Zariski open subset of $V$.
\end{definition}

With that said, we have the following,   due to  Reid; see  \cite[Theorem 2.6]{Reidcanon}, \cite[Lemma~5.30]{KollarMori}.

\begin{theorem} Let $(X,x)$ be an isolated rational Gorenstein singularity of dimension $3$, and let $S$ be a hypersurface section of\, $X$ passing through $x$ defined by a general $t\in \mathfrak{m}_x$ in the sense of Definition~\ref{definegeneral}.  Then $S$ has a du Val or a minimally elliptic  singularity.  \qed
\end{theorem} 

\begin{remark}\leavevmode
\begin{enumerate}[wide]
\item Since an isolated compound du Val singularity is  terminal, the du Val case is excluded if $X$ has a crepant resolution. 

\item In an earlier version of this paper, we incorrectly stated that $S$ has either a du Val, a simple elliptic, or a cusp  singularity. However, Paul Hacking communicated to us an example where the general hypersurface section has a Dolgachev (triangle) singularity. This example has a partial (singular) crepant resolution. It is possible that the existence of a good crepant resolution imposes additional constraints on general hypersurface sections.
  \end{enumerate}
\end{remark}

In the above situation, fix a general hypersurface section $S$ of $X$, and define $E_0$ to be the proper transform of $S$ on $\hX$. Thus $E_0\to S$ is birational, but the inverse image of the singular point is $\bigcup_{i\geq 1} (E_0\cap E_i)$. If $S$ is defined by the function $t\in \mathfrak{m}_{X,x}$, then as divisors $(\pi^*t) = E_0+\sum_{i\geq1}a_iE_i$, where $a_i$ is an integer $\geq 1$. Hence  
$$\scrO_{\hX}\left(E_0+\sum_{i\geq1}a_iE_i\right) \cong \scrO_{\hX}.$$
By the crepant assumption, $K_{E_i} =\scrO_{\hX}(E_i)|E_i = \scrO_{E_i}(E_i)$, including the case $i=0$ where we might have to replace $K_{E_0}$ by $\omega_{E_0}$, and $\omega_E = \scrO_{\hX}(E)|E = \scrO_{E}(E)$.

Motivated by the description of semistable smoothings of simple elliptic and cusp singularities (\textit{cf.} \cite{FriedmanMiranda} and Remark~\ref{Kulikov} below), we make the following definition.

\begin{definition}\label{defcrep1} A compact analytic surface $E = \bigcup_{i\geq 1}E_i$ with simple normal crossings is of \textsl{Type II} (Figure~\ref{fig1}) if, after relabeling, $E= \bigcup_{i=1}^rE_i$, where the following hold: 
  \begin{enumerate}
\item\label{defcrep1-1} $E_1, \dots, E_{r-1}$ are elliptic ruled (not necessarily minimally ruled),  and $E_r$ is rational, with $-K_{E_r}$ nef and big. (Here a smooth surface $S$ is a not necessarily minimally ruled surface over a base curve $D$ if there exists a morphism $S\to D$ whose generic fiber is $\Pee^1$, and it is elliptic ruled if $D$ is an elliptic curve.) 
\item\label{defcrep1-2}  The dual complex is a line segment with adjacent vertices $E_1$, \dots, $E_r$, or a single point if $r=1$. 
\item\label{defcrep1-3}  With $D_{i,i+1} = E_i\cap E_{i+1}$, $1\leq i\leq r-1$ as above, $D_{i,i+1}$ is a smooth elliptic curve.
\item\label{defcrep1-4}  There exists a smooth elliptic curve $D_0 \subseteq E_1$, disjoint from $D_{12}$, such that, in case $r> 1$, $K_{E_1} = \scrO_{E_1}(-D_0-D_{12})$, 
$K_{E_i} = \scrO_{E_i}(-D_{i-1,i}-D_{i,i+1})$ for $1\leq i\leq r-1$, and $K_{E_r} = \scrO_{E_r}(-D_{r-1,r})$, and   $K_{E_1} = \scrO_{E_1}(-D_0)$ in case $r=1$.
\item\label{defcrep1-5}  $\scrO_E(E) = \omega_E = \scrO_E(-D_0)$. 
\end{enumerate} 
\begin{figure}[htb]
\includegraphics[scale=0.7]{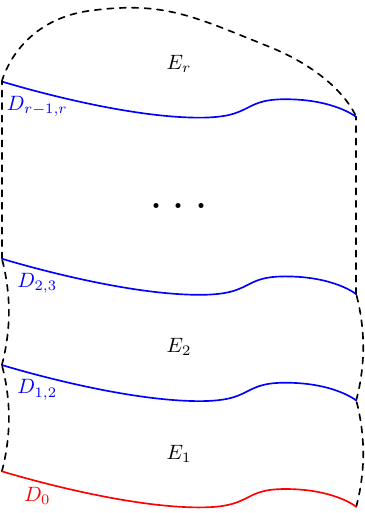}
\caption{Exceptional divisor $E$  of Type $II$}\label{fig1}
\end{figure}
Note that \eqref{defcrep1-5} implies that $\scrO_{D_{i,i+1}}(E) \cong \scrO_{D_{i,i+1}}$, and thus $T^1_E\cong \bigoplus_i\scrO_{\hX}(E)|D_{i,i+1}\cong \scrO_D$, where $D = \bigcup_{i=1}^{r-1}D_{i,i+1} = E_{\text{sing}}$.

\medskip

\noindent The compact analytic surface $E = \bigcup_{i=1}^rE_i$ with simple normal crossings is of \textsl{Type III}${}_1$ (Figure~\ref{fig2}) if,  possibly after relabeling, the following hold: 
\begin{enumerate}
\item\label{typeIII1-1} All of the $E_i$ are smooth rational. 
\item\label{typeIII1-2} $E_i\cap E_j =\emptyset$ unless $j =\pm i$. In particular, the dual complex of $E$ is a point (if $r=1$) or a line segment (if $r\geq 2$).
\item\label{typeIII1-3}  Assume that $r\geq 2$. Let $D_{i,i+1} =E_i\cap E_{i+1}$. Then $D_{i,i+1}$ is a smooth rational curve. For $i =1,r$, there exist    connected  curves $C_1$, $C_r$ on $E_1$, $E_r$ respectively, where $C_1$ and $C_r$ are chains of smooth rational curves meeting transversally, with $(C_1\cdot D_{12})_{E_1}  = (C_r\cdot D_{r-1,r})_{E_r} =2$, and we have $K_{E_1} = \scrO_{E_1}(-C_1-D_{12})$ and $K_{E_r} = \scrO_{E_1}(-C_r-D_{r-1,r})$. For $2\leq i\leq r-1$, there exist two   curves $C_i'$ and $C_i''$, both chains of smooth rational curves, supporting effective divisors of self-intersection $0$, and not containing $D_{i-1,i}$ or $D_{i,i+1}$,  with
$$\left(C_i'\cdot D_{i-1,i}\right)_{E_i} = \left(C_i''\cdot D_{i-1,i}\right)_{E_i} = \left(C_i'\cdot D_{i, i+1}\right)_{E_i}  = \left(C_i''\cdot D_{i, i+1}\right)_{E_i} =1,$$
such that 
$$K_{E_i} = \scrO_{E_i}\left(-D_{i-1,i}-C_i'-D_{i,i+1}-C_i''\right).$$ 
  Moreover, $C =C_1 + C_2'+C_2''+\cdots + C_{r-1}'+ C_{r-1}''+C_r$ is a Cartier divisor on $E$, where $C'= \sum_{i=2}^{r-1}C_i'$ and $C''= \sum_{i=2}^{r-1}C_i''$ are connected.    
\item\label{typeIII1-4} In all cases, $\scrO_E(E) = \omega_E = \scrO_E(-C)$, where if $r=1$, $C$ is a cycle of smooth rational curves or an irreducible nodal curve.
\end{enumerate} 
\begin{figure}[htb]
\includegraphics[scale=0.7]{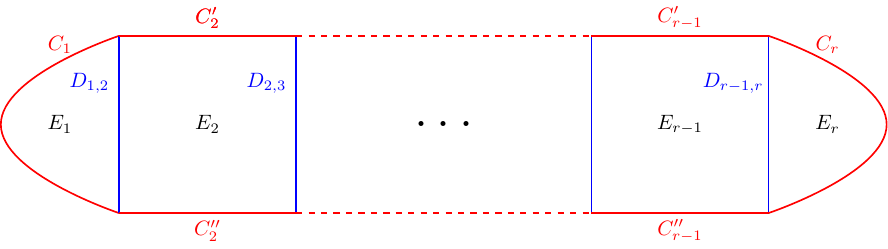}
\caption{Exceptional divisor $E$  of Type $III_{1}$}\label{fig2}
\end{figure}
Note that \eqref{typeIII1-4} implies that there exist points $p_i', p_i''$ in $D_{i, i+1}$ for $1\leq i\leq r-1$  which are smooth points on $D$ and such that $T^1_E\cong  \scrO_D(-\sum_i(p_i'+ p_i''))$.

\medskip

\noindent The compact analytic surface $E = \bigcup_{i=1}^rE_i$ with simple normal crossings is of \textsl{Type III}${}_2$ (Figure~\ref{fig3}) if $r\geq 2$ and, possibly after relabeling, the following hold: 
\begin{enumerate}
\item\label{typeIII2-1} All of the $E_i$ are smooth rational. 
\item\label{typeIII2-2} There are distinguished components  $E_1, \dots, E_s$ such that $E_i\cap E_j =\emptyset$ unless $j =\pm i \bmod s$. In particular, for $s\geq 3$, the dual complex of $E_1, \dots, E_s$ is a circle (and similarly for $r=2$, where the dual complex has two vertices joined by two edges). Moreover, there exist chains of smooth  rational curves  $C_i$ on $E_i$, $1\leq i \leq s$, such that  $C_i$  and $D_i$ have no component in common and  $C_i\cap E_j\neq \emptyset$ if and only if we have $1\leq j\leq s$ and $j =\pm 1\bmod s$. In this case,   $(C_i \cdot D_{i,i+1}')_{E_i} = 1$ for every component $D_{i,i+1}'$ of $D_{i,i+1}$ meeting $E_0$, with the convention that $D_{s,s+1}=D_{1,s}$.  Moreover, $C =C_1+\cdots + C_s$ is a Cartier divisor on $E$ contained in $\bigcup_{1=1}^sE_i$; more precisely,  $C_i\cap E_{i+1} = C_{i+1}\cap E_i$. 
\item\label{typeIII2-3}  For $1\leq i \leq s$, $C_i + D_i$ is a cycle of smooth rational curves,   and $K_{E_i} = \scrO_{E_i}(-C_i-D_i)$.  For $i> s$, $D_i$ is a cycle of smooth rational curves, and $K_{E_i} = \scrO_{E_i}(-D_i)$. 
\item\label{typeIII2-4} The dual complex of $E$ is  a semisimplicial triangulation of the $2$-disk, and $E_1 , \dots,  E_r$ are the boundary vertices.
\item\label{typeIII2-5} $\scrO_E(E) = \omega_E = \scrO_E(-\sum_{i=1}^sC_i)$. 
\end{enumerate} 
\begin{figure}[htb]
\includegraphics[scale=0.7]{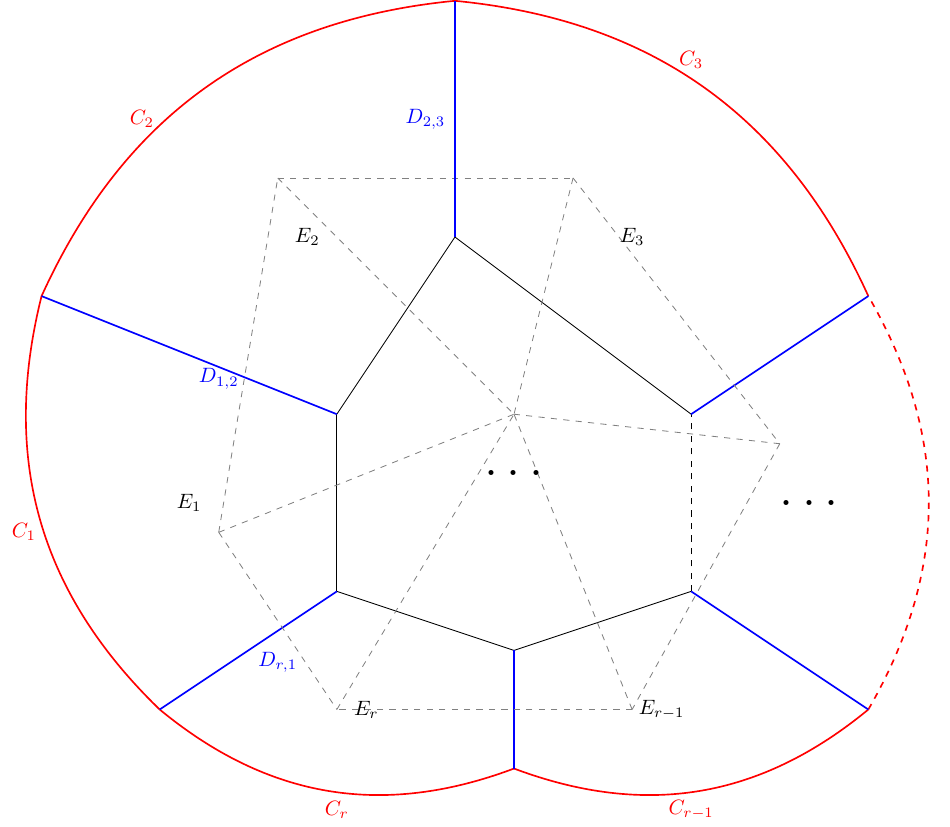}
\caption{Exceptional divisor $E$  of Type $III_{2}$}\label{fig3}
\end{figure}
Note that \eqref{typeIII2-5} implies the following (keeping the   convention that $D_{s,s+1}=D_{1,s}$): Suppose $D_{i,i+1}$ is irreducible. Then there exist points $p_i$ in $D_{i,i+1}$ for $1\leq i\leq s$, not in $D_{\text{\rm{sing}}}$, such that $T^1_E\cong \scrO_D(-\sum_i p_i)$. An analogous statement holds if $D_{i,i+1}$ is reducible, where such points exist in every component of $D_{i,i+1}$ meeting $C_i$. 
\end{definition}

\begin{remark}\label{Kulikov} The above terminology is modeled on the corresponding terminology for semistable degenerations of $K3$ surfaces due to Persson, Kulikov, and others. More precisely, consider a smoothing $\rho\colon X \to (\Delta,0)$, where $X = (X,x)$ is the germ of a simple elliptic or cusp singularity over the unit disk (\textit{i.e.} $\rho$ is a flat morphism of germs, $\rho^{-1}(0)$ is a simple elliptic or cusp singularity, and $\rho^{-1}(t)$ is smooth for $t\neq 0$). By \cite[Theorem 2.5]{FriedmanMiranda}, possibly after a base change, there exists a semistable crepant resolution $\pi\colon \hX \to X$. In other words, $\hX$ is smooth and induces  a minimal resolution of $\rho^{-1}(0)$ with exceptional divisor $D_0$, $K_{\hX}\cong \scrO_{\hX}$,  and the induced morphism $f= \rho\circ \pi\colon \hX\to \Delta$ satisfies the following: $f^{-1}(0)$ is a reduced divisor with simple normal crossings. Then the dual complex of $f^{-1}(0)$ is a line segment in the simple elliptic case (Type II) or a triangulation of $S^2$ in the cusp case (Type III). Thus, the dual complex of $E =\pi^{-1}(0)$ is again a line segment in the simple elliptic case, and hence is of Type II in the sense of Definition~\ref{defcrep1}, or it is obtained by deleting a vertex in a triangulation of $S^2$ and hence is either a line segment or a disk, \textit{i.e.} is of Type III${}_1$ or Type III${}_2$ in the sense of Definition~\ref{defcrep1}. See also Theorem~\ref{partialconverse} for a partial converse to this picture. 
\end{remark}

\begin{remark} As noted in greater generality in \cite{Kawamata1988}, the number $r$ of divisors in the exceptional set $E$ of the crepant resolution $\pi\colon \hX \to X$ is independent of the choice of crepant resolution. This follows easily from the fact  that any two crepant resolutions are isomorphic in codimension $1$ (see \textit{e.g.} Remark~\ref{crepremark}). The invariant $\ell= \dim H^2(L)$ defined in Proposition~\ref{3dimtop}\eqref{3dimtop-5} can also be computed directly in this case, at least for a semistable smoothing. 
\end{remark} 

\begin{proposition}\label{linkcalc}\leavevmode
  \begin{enumerate}
  \item[\rm(i)] Suppose that $\hX$ is the semistable model for a smoothing of a simple elliptic singularity, of multiplicity $m = -D_0^2$. Then
$$\ell = 9-m.$$
\item[\rm(ii)] Suppose that $\hX$ is the semistable model for a smoothing of a cusp singularity, of multiplicity $m = -D_0^2$ and length $s$, the number of components of the cusp. Then
$$\ell = 9-m+ s.$$
\end{enumerate}
\end{proposition} 
\begin{proof} By Proposition~\ref{3dimtop}\eqref{3dimtop-5}, $\ell =   \sum_{i=1}^rb_2(E_i) - \#\{\text{double curves}\} -r$, where $r$ is the number of components of $E$.  In the simple elliptic case, after standard birational operations (flops or Type II elementary modifications), we can assume that $E_1, \dots, E_{r-1}$ are minimal elliptic ruled surfaces and that $E_r$ is a generalized del Pezzo surface of degree $-(D_{r-1,r})_{E_{r-1}}^2 =(D_{r-1,r})_{E_r}^2 =  K_{E_r}^2 = m$. Thus there are $r-1$ double curves, $r-1$ elliptic ruled components $E_i$ with $b_2(E_i)=2r$, and the remaining component $E_r$ satisfies  $b_2(E_r)= 10-m$. Putting this together gives
$$\ell = 2(r-1) + 10-m - 2r +1 = 9-m.$$
In the cusp case,  we shall just write down the proof for $E$ of Type III${}_2$ (the proof in the Type III${}_1$ case is similar but simpler).  Let $e$ be the number of double curves of $E$, and let $t$ be the number of triple points. By taking Euler characteristics,
$$r-e+t=1.$$
Each surface $E_i$ satisfies $-K_{E_i} =\scrO_{E_i}(D_i+C_i)$ or $-K_{E_i} =\scrO_{E_i}(D_i)$, depending on whether $E_i$ meets $E_0$, and in this case $C_i$ is irreducible since by assumption  $\hX$ is  semistable. Set $\widehat{D}_i = D_i$ if  $E_i$ does not meet  $E_0$ (\textit{i.e.} $i > s$), and  set $\widehat{D}_i = D_i+C_i$ if  $E_i$  meets  $E_0$ (\textit{i.e.} $1\le i \le s$). If $s_i$ is the number of components of $\widehat{D}_i$, then $\sum_is_i = 2e+ s$. Every triple point is contained in  three edges, and every edge contains two triple points except for the edges corresponding to the double curves $E_i\cap E_{i+1}$, $1\le i \le s$, which just meet one triple point.   Thus $2e = 3t+s$. 

The surfaces $E_i$, $i > 0$, are rational surfaces with an anticanonical cycle $\widehat{D}_i $. Following \cite[Definition 3.1]{FriedmanMiranda},  define the \textsl{charge} $Q(E_i, \widehat{D}_i )$ by
$$  Q\left(E_i, \widehat{D}_i \right) = 12- \left(\widehat{D}_i\right)^2 - s_i.$$
Note that this definition makes sense for $E_0$ as well, where we set $\widehat{D}_0 =D_0 = \sum_{i=1}^rC_i$ and  $  Q(E_0, \widehat{D}_0) = 12+m - s$. For $i > 0$, by \cite[Lemma 1.2]{Fsurvey}, 
$b_2(E_i ) = Q(E_i, \widehat{D}_i ) -2 + s_i$. 
Then
{\allowdisplaybreaks
\begin{align*}
\ell &= \sum_{i\ge 1} b_2(E_i) - e -r  = \sum_{i\ge 1} Q\left(E_i, \widehat{D}_i \right) - 3r + \sum_{i\ge 1}s_i -e\\
&= \sum_{i\ge 1} Q\left(E_i, \widehat{D}_i \right) -3(1+e-t) + 2e+s -e \\
&= \sum_{i\ge 1} Q\left(E_i, \widehat{D}_i \right) -3 + (3t- 2e+ s) = \sum_{i\ge 1} Q\left(E_i, \widehat{D}_i \right) -3.
\end{align*}} 
The  principle of ``conservation of charge,'' see   \cite[Proposition 3.7]{FriedmanMiranda}, implies that 
$$\sum_{i\ge 1} Q\left(E_i, \widehat{D}_i \right) +  Q\left(E_0, \widehat{D}_0\right) = 24.$$
Thus $\sum_{i\ge 1} Q(E_i, \widehat{D}_i ) = 12 -m +s$, and so finally 
\begin{equation*}\pushQED{\qed}
  \ell = \sum_{i\ge 1} Q\left(E_i, \widehat{D}_i \right)  -3 = 9-m+s.
\qedhere \popQED
	\end{equation*}  
\renewcommand{\qedsymbol}{}
\end{proof}

\begin{remark}   Proposition~\ref{linkcalc} implies    the well-known results that  $m\leq 9$ for a smoothable simple elliptic singularity and $m \leq 9+s$ for a smoothable cusp singularity.
\end{remark}

In the Type II and Type III${}_1$ cases, $T^1_E$ is uniquely specified as noted in the definition. For the Type III${}_2$ case, $T^1_E$ is also uniquely specified by condition~\eqref{typeIII2-5}. 

\begin{lemma} Suppose that $E$ is of\, Type III${}_2$. For every $i$, $1\leq i\leq s$, and every component $\Gamma_{i\alpha}$ of\, $D_{i,i+1}$ meeting $C_i$,  choose points $q_{i\alpha}\in \Gamma_{i\alpha}$, not in $E_j$ for $j\neq i$. Then  $\scrO_D(-\sum_{i,\alpha}q_{i\alpha}) \cong T^1_E$. 
\end{lemma}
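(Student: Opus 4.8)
The plan is to show that the sheaf $T^1_E$ on $D = E_{\text{sing}}$ is determined, as an invertible sheaf on $D$ with its scheme structure, by its known form plus the constraint (v), namely $\scrO_E(E) = \omega_E = \scrO_E(-\sum_{i=1}^r C_i)$. Recall from the general discussion that $T^1_E\cong \scrO_{\hX}(E)|D \cong \scrO_D(E)$, since for a simple normal crossings surface the sheaf $T^1_E$ is the restriction to the double locus of the normal bundle $N_{E/\hX}$ twisted appropriately; more precisely, using the resolution $I_E/I_E^2 \to \Omega^1_{\hX}|E$ and the local description of $T^1_E$ at double points versus triple points, one has $T^1_E \cong \scrO_D(E)\otimes(\text{correction at triple points})$, and under the Type III${}_2$ hypotheses the triple-point correction is built into the identification $T^1_E\cong \scrO_D(-\sum p_i)$ already recorded after Definition~\ref{defcrep1}. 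So the content of the lemma is that $\scrO_D(E)$, restricted to each component $\Gamma_{i\alpha}$ of each double curve $D_{i,i+1}$ that meets $C_i$, is the line bundle $\scrO_{\Gamma_{i\alpha}}(-q_{i\alpha})$ of degree $-1$, and is trivial on the components of $D$ not meeting any $C_i$, and that the gluing data along $D_{\text{sing}}$ is forced.

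The key step is the restriction computation. On each $E_i$ with $1\le i\le r$, we have by crepancy $K_{E_i} = \scrO_{E_i}(E)|E_i = \scrO_{E_i}(-C_i - D_i)$ from Definition~\ref{defcrep1}(iii). Restricting further to a component $\Gamma = \Gamma_{i\alpha}$ of $D_{i,i+1}\subseteq D_i$, the adjunction-type identity $\scrO_E(E)|E_i = \scrO_{E_i}(-C_i-D_i)$ gives
$$\scrO_D(E)|\Gamma = \scrO_{E_i}(-C_i - D_i)|\Gamma = \scrO_\Gamma(-C_i\cap\Gamma)\otimes\scrO_\Gamma(-D_i\cap\Gamma) = \scrO_\Gamma(-C_i\cap\Gamma)\otimes(\text{nodal correction}).$$
By Definition~\ref{defcrep1}(ii), $(C_i\cdot\Gamma)_{E_i} = 1$ for every component $\Gamma$ of $D_{i,i+1}$ meeting $C_i$, so $C_i$ meets $\Gamma$ transversally in a single point $q_{i\alpha}$, which by the choice in the lemma lies on no other $E_j$. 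The self-intersection pieces $D_i\cap\Gamma$ contribute only at triple points, and these are precisely the terms absorbed into passing from $\scrO_D(E)$ to $T^1_E$ (equivalently, computing $\scrO_E(E)|D$ against the nodal structure of each cycle $C_i + D_i$). Thus $T^1_E|\Gamma_{i\alpha}\cong\scrO_{\Gamma_{i\alpha}}(-q_{i\alpha})$. On a component $\Gamma$ of some $D_{i,i+1}$ that does not meet $C_i$ (if such exist), the same computation gives $C_i\cap\Gamma = \emptyset$ and $T^1_E|\Gamma\cong\scrO_\Gamma$; on double curves $D_i$ with $i> r$ we have $K_{E_i} = \scrO_{E_i}(-D_i)$ with no $C_i$ term, so again $T^1_E$ restricts trivially there. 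Since $D$ is a nodal curve (the union of the cycles $C_i + D_i$ and $D_i$ glued at triple points, with $H^1(\scrO_D)$ controlled by Steenbrink's theorem, Theorem~\ref{Steenlemma}), and since an invertible sheaf on $D$ whose degree on every component is determined and whose gluing is forced by the identification $T^1_E\cong\scrO_D(E)$ (not just its restrictions), we conclude $T^1_E\cong\scrO_D(-\sum_{i,\alpha}q_{i\alpha})$.

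The main obstacle I expect is not the degree count but the bookkeeping of the gluing data at the triple points of $D$, i.e.\ showing that the two candidate line bundles $\scrO_D(-\sum q_{i\alpha})$ and $T^1_E$ agree as sheaves on $D$ and not merely componentwise. For this I would argue directly: both are realized as restrictions of line bundles on $E$ — namely $T^1_E = \scrO_E(E)|D$ by definition, and one checks that $\scrO_E(-\sum_i C_i)|D = \scrO_D(-\sum_{i,\alpha} q_{i\alpha})$, since $\sum_i C_i$ meets $D$ transversally and only along the $q_{i\alpha}$ (using $C_i\cap E_{i+1} = C_{i+1}\cap E_i$ from (ii), so the $C_i$ glue into a Cartier divisor $C$ on $E$ disjoint from $D_{\text{sing}}$) — and then Condition (v), $\scrO_E(E) = \scrO_E(-\sum_i C_i) = \omega_E$, gives the isomorphism on all of $E$, hence on $D$ after restriction. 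The only subtlety is checking that $C = \sum C_i$ avoids the triple points of $E$ and is an honest Cartier divisor there, which is exactly what Definition~\ref{defcrep1}(ii) asserts; granting that, the lemma follows by restricting the equality $\scrO_E(E)=\scrO_E(-C)$ to $D$.
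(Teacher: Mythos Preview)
Your argument has a genuine gap: you have misread the statement. The points $q_{i\alpha}$ in the lemma are \emph{arbitrary} points of $\Gamma_{i\alpha}$ (avoiding only the other $E_j$), not the specific intersection points $C_i\cap\Gamma_{i\alpha}$. When you write ``$C_i$ meets $\Gamma$ transversally in a single point $q_{i\alpha}$, which by the choice in the lemma lies on no other $E_j$,'' you are conflating the arbitrary $q_{i\alpha}$ of the statement with the particular point $p_{i\alpha}=C_i\cap\Gamma_{i\alpha}$. Your restriction argument via condition (v), namely $\scrO_E(E)=\scrO_E(-C)$, can at best yield $\scrO_E(-C)|D\cong\scrO_D(-\sum p_{i\alpha})$ for these specific points; this is exactly the observation already recorded immediately after Definition~\ref{defcrep1} and is not the content of the lemma. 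The lemma asserts that the isomorphism class of $\scrO_D(-\sum q_{i\alpha})$ is independent of the choice of $q_{i\alpha}$, which is a statement about $\Pic D$ that your argument never addresses.

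The paper's proof handles exactly this point. It decomposes $D=D'\cup D''$, where $D''$ consists of the components meeting $C$, and shows via the exact sequence for $\scrO_D^*$ that $\Pic D\cong\Pic D'\times\Pic D''$. Since each component of $D''$ is a $\Pee^1$, a line bundle on $D''$ is determined by its multidegree; thus $\scrO_D(\sum p_{i\alpha}-\sum q_{i\alpha})$ is trivial on $D''$ and (being supported away from $D'$) trivial on $D'$, hence trivial on $D$. There is also a secondary confusion in your write-up: the identification $T^1_E=\scrO_E(E)|D$ that you invoke ``by definition'' in the final paragraph is not correct in the presence of triple points (as you yourself note earlier), since on a component $\Gamma\subseteq D_{i,i+1}$ one has $T^1_E|\Gamma\cong\scrO_{\hX}(E_i+E_{i+1})|\Gamma$, not $\scrO_{\hX}(E)|\Gamma$. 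But the essential missing idea is the Picard group computation.
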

\begin{proof} Let $D =D'\cup D''$, where $D''$ consists of the components of $D_{i,i+1}$ meeting $C_i$, $1\leq i\leq s$, with the usual conventions, and $D'$ is the union of all of the other components. For simplicity, we just consider the case where $D_{i,i+1}\cong \Pee^1$ is irreducible for every $i$ and write $q_i$ instead of $q_{i\alpha}$. Let $L$ be a line bundle on $D$ such that $L|D'\cong \scrO_{D'}$ and $L|D''\cong \scrO_{D''}$. We claim that $L\cong \scrO_D$. Applying this to $\scrO_D(-\sum_iq_i) \otimes T^1_E \cong \scrO_D(p_i-\sum_iq_i)$ then proves the lemma.

More precisely, we shall show  that $\Pic D \cong \Pic D'\times \Pic D''\cong \Pic D'\times \Zee^s$,  and hence $p_a(D) = p_a(D')$.  We have an exact sequence
$$0\lra \scrO_D^* \lra \scrO_{D'}^* \times \scrO_{D''}^* \lra \prod_{i=1}^s\Cee^*_{p_i} \lra 0,$$
where $D'\cap D'' =\{p_1, \dots, p_s\}$ and $\Cee^*_{p_i}$ is the skyscraper sheaf at $p_i$ with stalk $\Cee^*$. 
But $H^0(D'';\scrO_{D''}^*)\cong (\Cee^*)^s$, and the induced map 
$$H^0\left(D'';\scrO_{D''}^*\right) \lra H^0\left( D;\prod_{i=1}^s\Cee^*_{p_i}\right) \cong (\Cee^*)^s$$
coming from the above exact sequence is an isomorphism. Thus
$$H^1\left(D;\scrO_D^*\right) =\Pic D \cong H^1\left(D';\scrO_{D'}^*\right) \times H^1\left(D'';\scrO_{D''}^*\right) = \Pic D'\times \Pic D'',$$ 
completing the proof.
\end{proof}

Next we  describe the cokernel of $H^1(\hX; T_{\hX})$ in $H^1(E;T^0_E)$. As we have seen in the proof of Theorem~\ref{goodcrepdef}\eqref{goodcrepdef-6}, this cokernel is $H^2(\hX; T_{\hX}(-E))$, and it measures the failure of a first-order locally trivial deformation of $E$ to be realized by a deformation of $\hX$ and the divisors $E_1, \dots, E_r$. 

\begin{proposition}\label{prop6} Suppose that $E$ is  of Type II. Then    $\dim H^2(\hX; T_{\hX}(-E)) \geq r-1$, where $r$ is the number of components of\, $E$, and $ H^2(\hX; T_{\hX}(-E)) =0$ if $r=1$, \textit{i.e.} $E$ is irreducible.
\end{proposition}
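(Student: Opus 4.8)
The plan is to compute $H^2(\hX; T_{\hX}(-E))$ via the isomorphism $T_{\hX}(-\log E) \cong \Omega^{n-1}_{\hX}(\log E)(-E) = \Omega^2_{\hX}(\log E)(-E)$ (using $\dim X = 3$ and crepancy) together with the exact sequence
\[
0 \to T_{\hX}(-E) \to T_{\hX}(-\log E) \to T^0_E \to 0
\]
already recorded in the proof of Theorem~\ref{goodcrepdef}(iv). Since $H^1(\hX; T_{\hX}(-\log E)) = H^1(\hX; \Omega^2_{\hX}(\log E)(-E))$ and, by the Guillén--Navarro Aznar et al.\ vanishing, $H^2(\hX; T_{\hX}(-\log E)) = 0$, the cokernel of $H^1(\hX; T_{\hX}(-\log E)) \to H^1(E; T^0_E)$ is exactly $H^2(\hX; T_{\hX}(-E))$, as noted in Theorem~\ref{goodcrepdef}(vi). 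So it suffices to bound from below the cokernel of $H^1(\hX; T_{\hX}(-\log E)) \to H^1(E; T^0_E)$, equivalently to show $\dim H^1(E; T^0_E) - \dim(\text{image}) \geq r-1$.

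The key step is to analyze $H^1(E; T^0_E)$ directly in the Type II case. Here $E = E_1 \cup \cdots \cup E_r$ with $E_1,\dots,E_{r-1}$ elliptic ruled, $E_r$ rational with $-K_{E_r}$ nef and big, the dual complex a chain, and the double curves $D_{i,i+1}$ smooth elliptic. From the exact sequence $0 \to T^0_E \to \bigoplus_i T_{E_i}(-\log D_i) \to \bigoplus_{i<j} (\text{something on } D_{ij}) \to 0$ — more precisely the standard sequence relating $T^0_E$ to the logarithmic tangent sheaves of the components glued along the double curves — I would extract a piece of $H^1(E; T^0_E)$ coming from $\bigoplus_i H^1(E_i; T_{E_i}(-\log D_i))$. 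For an elliptic ruled surface $E_i$ ($1 < i < r$) with $D_i = D_{i-1,i} + D_{i,i+1}$ two disjoint elliptic fibers/sections, and for the boundary components $E_1, E_r$, these $H^1$'s contribute deformations; the point is that there are $r-1$ "independent" directions — morally one for each double curve $D_{i,i+1}$, coming from moving the $j$-invariant or the gluing along the elliptic curve $D_{i,i+1}$ — that do not lift to $H^1(\hX; T_{\hX}(-\log E))$. I expect the cleanest route is to identify $H^2(\hX; T_{\hX}(-E))$ with (the dual of) $H^1(\hX; \Omega^1_{\hX}(\log E))$-type data or with a cokernel involving $\bigoplus_i H^1(D_{i,i+1}; \scrO)$ versus $H^1$ of a global object, and use Proposition~\ref{3dimtop}(ii) (that $\bigoplus_i H^1(E_i;\scrO_{E_i}) \to \bigoplus_{i<j} H^1(D_{ij};\scrO_{D_{ij}})$ is an isomorphism) to pin down the dimension count: there are $r-1$ double curves, each elliptic, and the relevant obstruction space surjects onto a rank-$(r-1)$ quotient.

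For the $r=1$ statement: when $E = E_1$ is irreducible, it is smooth (a smooth rational surface with $-K$ nef and big), so $T^0_E = T_{E_1}$ and $T^1_E = 0$; then the sequence $0 \to T_{\hX}(-E) \to T_{\hX}(-\log E) \to T_{E_1} \to 0$ together with $H^2(\hX; T_{\hX}(-\log E)) = 0$ gives that $H^2(\hX; T_{\hX}(-E))$ is the cokernel of $H^1(\hX; T_{\hX}(-\log E)) \to H^1(E_1; T_{E_1})$, and I would show this is zero either directly from Theorem~\ref{goodcrepdef}(vi) applied in reverse (the smooth-$E$ case is promised there to give surjectivity of $H^1(\hX;T_{\hX}) \to \mathbb{T}^1_E$, hence $H^2(\hX;T_{\hX}(-E)) = 0$) or by a vanishing argument for $H^2$ of a suitable sheaf on $\hX$ using rationality of the singularity.

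\medskip

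The main obstacle I anticipate is the lower bound in the reducible case: making precise which $r-1$ first-order locally trivial deformations of $E$ fail to be realized, i.e.\ exhibiting an explicit $(r-1)$-dimensional quotient of $H^1(E; T^0_E)$ onto which the obstruction space $H^2(\hX; T_{\hX}(-E))$ maps isomorphically (or injects). This requires understanding the gluing/cross-ratio data along the elliptic double curves and checking that deforming these is genuinely obstructed from the point of view of $\hX$ — geometrically, that one cannot independently vary the elliptic curves $D_{i,i+1}$ while keeping them the intersections of the $E_i$ inside a deformation of $\hX$, because $\hX$ is local Calabi--Yau and such a deformation would alter the periods in a way incompatible with staying inside the resolution. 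Controlling this should follow from a careful bookkeeping with the Mayer--Vietoris spectral sequence for $T^0_E$ on $E$ combined with the long exact sequences already displayed in Section~\ref{Section1}, but it is the step where the special geometry of Type II (elliptic ruled components, elliptic double curves) really enters.
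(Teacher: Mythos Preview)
Your identification of $H^2(\hX;T_{\hX}(-E))$ with the cokernel of $H^1(\hX;T_{\hX}(-\log E))\to H^1(E;T^0_E)$ is correct and is indeed recorded in the paper. But from that point on your proposal diverges from the paper's proof and, more importantly, does not actually close.

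For $r=1$ your argument is circular. Theorem~\ref{goodcrepdef}(vi) only states the \emph{equivalence} between surjectivity of $H^1(\hX;T_{\hX})\to\mathbb{T}^1_E$ and the vanishing $H^2(\hX;T_{\hX}(-E))=0$; the remark that surjectivity holds when $E$ is smooth is a forward reference to precisely this proposition, so invoking it ``in reverse'' assumes what you are trying to prove. The alternative ``vanishing argument \dots\ using rationality of the singularity'' is not an argument: rationality gives vanishing for $R^i\pi_*\scrO_{\hX}$, not for $T_{\hX}(-E)$.

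For $r\ge 2$ you correctly isolate the main difficulty---controlling the image of $H^1(\hX;T_{\hX}(-\log E))$ in $H^1(E;T^0_E)$---but you never propose a mechanism that sees the embedding of $E$ in $\hX$, only intrinsic data on $E$. A Mayer--Vietoris computation of $H^1(E;T^0_E)$ cannot by itself bound the cokernel of a map \emph{from} $\hX$.

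The paper takes a completely different route that avoids studying this cokernel directly. By formal functions $H^2(\hX;T_{\hX}(-E))=\varprojlim_n H^2\bigl(T_{\hX}(-E)\,|\,nE\bigr)$, and since $H^3(\hX;T_{\hX}(-2E))=0$ the restriction to $n=1$ is already surjective. One then Serre-dualizes on $E$: $H^2\bigl(T_{\hX}|E\otimes\scrO_E(-(n+1)E)\bigr)$ is dual to $H^0\bigl(\Omega^1_{\hX}|E\otimes\scrO_E((n+2)E)\bigr)$. Via the conormal sequence and the filtration $\tau^1_E\subset\Omega^1_E$, this $H^0$ is controlled by $H^0(E;\tau^1_E\otimes\scrO_E((n+2)E))$, because $H^0(E;\scrO_E(mE))=0$ for $m>0$ and $H^0(E;\Omega^1_E/\tau^1_E)=0$. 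In the Type~II case one computes $\tau^1_E\cong\scrO_D$ with $D=\coprod_{i=1}^{r-1}D_{i,i+1}$ and $\scrO_E(mE)|D\cong\scrO_D$, so this $H^0$ has dimension exactly $r-1$; for $r=1$ there is no $D$ and the group vanishes. The lower bound $\dim H^2(\hX;T_{\hX}(-E))\ge r-1$ (and the vanishing for $r=1$) falls out immediately. The missing idea in your proposal is precisely this passage, via duality on thickenings, to an explicit $H^0$ computation governed by the torsion K\"ahler differentials $\tau^1_E$.
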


\begin{remark} The geometric interpretation of Proposition~\ref{prop6} is as follows. By Theorem~\ref{goodcrepdef}, $ H^2(\hX; T_{\hX}(-E))$ is the obstruction to realizing all first-order locally trivial deformations of $E$ by deforming $\hX$. We have seen that, in the Type II case, where $E \subseteq \hX$, we have $T^1_E \cong \scrO_D$, where $D=E_{\text{sing}}$. For a general deformation of $E$, the line bundle $T^1_E$ on $D$ has degree $0$ but is not necessarily the trivial line bundle. Here,  $r-1$ is the number of conditions on the deformation required to keep $T^1_E$ trivial. 
\end{remark}

\begin{proof}[Proof of Proposition \ref{prop6}]  By the formal functions theorem,
  $$H^2\left(\hX; T_{\hX}(-E)\right) =\varprojlim_n H^2\left(\hX; T_{\hX}(-E)|nE\right).$$
Since $H^3(\hX; T_{\hX}(-2E)) =0$,  $H^2(\hX; T_{\hX}(-E)) \to H^2(E; T_{\hX}(-E)|E )$ is surjective. For $n\geq 1$, we have  the exact sequence
$$0 \lra T_{\hX}(-E)|E \otimes \scrO_E(-nE) \lra T_{\hX}(-E)|(n+1)E \lra T_{\hX}(-E)|nE \lra 0. $$
By duality (where we also allow the cases $n=0, -1$), $H^2(E;T_{\hX}|E \otimes \scrO_E(-(n+1)E))$ is dual to 
$$H^0\left(E;\Omega^1_{\hX}|E \otimes \scrO_E((n+1)E) \otimes \omega_E\right) = H^0\left(E;\Omega^1_{\hX}|E \otimes \scrO_E((n+2)E)\right).$$
By Theorem~\ref{somecrepresults}\eqref{somecrepresults-1},  $\scrO_E(E) = K_E = \scrO_E(-D_0)$ for an effective nonzero divisor $D_0$ on $E$, disjoint from the singular locus, and similarly $\scrO_E(mE) =   \scrO_E(-mD_0)$. In particular,  for $m\geq 0$, there is an inclusion  $\scrO_E(mE) \to \scrO_E$ which only vanishes along the divisor $D_0$ for $m>0$. 
Now use the conormal sequence
$$0\lra I_E/I_E^2 \lra \Omega^1_{\hX}|E \lra \Omega^1_E \lra 0.$$
Since $I_E/I_E^2  =\scrO_E(-E)$, we have $I_E/I_E^2 \otimes \scrO_E((n+2)E)= \scrO_E((n+1)E)$. First assume $r=1$, \textit{i.e.} $E$ is smooth. From the conormal sequence, we get an exact sequence
$$H^0\left(E; \scrO_E((n+1)E)\right) \lra H^0\left(E;\Omega^1_{\hX}|E \otimes \scrO_E((n+2)E)\right) \lra H^0\left(E;\Omega^1_E \otimes \scrO_E((n+2)E)\right).$$
Since $H^0(E;\scrO_E((n+1)E))=0$ for all $n\geq 0$,   there is a sequence of  inclusions
$$H^0\left(E;\Omega^1_{\hX}|E \otimes \scrO_E((n+2)E)\right) \lra H^0\left(E;\Omega^1_E \otimes \scrO_E((n+2)E)\right) \lra H^0\left(E;\Omega^1_E\right) = 0.$$
Hence $H^0(E;\Omega^1_{\hX}|E \otimes \scrO_E((n+2)E)) =0$ for all $n\geq 0$, and therefore $H^2(\hX; T_{\hX}(-E)) =0$. 

Now assume $r\geq 2$.  With notation as in Definition~\ref{defcrep1}, $\scrO_E(mE)|D_{i,i+1}\cong \scrO_{D_{i,i+1}}$, $\scrO_E(mE)|E_i\cong \scrO_{E_i}$ if $i>1$, and $\scrO_E(mE)|E_1\cong \scrO_{E_1}(-mD_0)$. Using the exact sequence (from the normalization)
$$0\lra \scrO_E(mE) \lra \bigoplus_i\scrO_E(mE)|E_i \lra \bigoplus_i\scrO_{D_{i,i+1}} \lra 0,$$
we see that  the   map  $\bigoplus_iH^0(E_i; \scrO_E(mE)|E_i) \to \bigoplus_iH^0(D_{i,i+1};\scrO_{D_{i,i+1}})$ is an isomorphism.  Thus, as in the case $r=1$,  $H^0(E;\scrO_E(mE))=0$ for all $m>0$.

 As in the case $r=1$, we want to analyze $H^0(E;\Omega^1_{\hX}|E \otimes \scrO_E((n+2)E))$.  By the above,   
 $$H^0\left(E;\Omega^1_{\hX}|E \otimes \scrO_E((n+2)E)\right) \subseteq H^0\left(E;\Omega^1_E \otimes \scrO_E((n+2)E)\right).$$
 Now we have the exact sequence
$$0\lra \tau^1_E \lra  \Omega^1_E \lra \Omega^1_E/\tau^1_E \lra 0.$$ Also, 
$H^0(E; \Omega^1_E/\tau^1_E)=0$ by Proposition~\ref{3dimtop}\eqref{3dimtop-3}. Since there is an  inclusion 
$$H^0\left(E; \left(\Omega^1_E/\tau^1_E \right)\otimes \scrO_E((n+2)E)\right)  \subseteq H^0\left(E; \Omega^1_E/\tau^1_E\right),$$
we conclude that 
$H^0\left(E; \left(\Omega^1_E/\tau^1_E \right)\otimes \scrO_E((n+2)E)\right) =0$. The remaining term is  
$$H^0(E; \tau^1_E\otimes \scrO_E((n+2)E)).$$ As $D$ is smooth and the curve $D_{i,i+1}$ appears twice in $\coprod _iE_i$, by \cite[Proposition~1.10(2)]{F83}, there is an exact sequence
$$0\lra \scrO_{\hX}(-E)|D  \lra  \bigoplus_i\left(\scrO_{\hX}(-E)|D_{i,i+1}\right)^2 \lra \tau^1_E \lra 0.$$
Since $D$ is a disjoint union of the smooth components $D_i$ and $\scrO_{\hX}(-E)|D\cong \scrO_D$, this says that $\tau^1_E \cong \scrO_D = \bigoplus _{i=1}^{r-1} \scrO_{D_{i,i+1}}$. 
Hence
$$H^0\left(E; \tau^1_E\otimes \scrO_E((n+2)E)\right)\cong H^0(D; \scrO_D) =\Cee^{r-1}.$$
In particular, taking $n=1$, we get $\dim  H^2(E; T_{\hX}(-E)|E ) =r-1$. Thus $\dim H^2(\hX; T_{\hX}(-E)) \geq r-1$. 
\end{proof}

\begin{remark} If we wanted to fully calculate $\dim H^2(\hX; T_{\hX}(-E))$, we would have to understand the coboundary
$$H^1\left(nE; T_{\hX}(-E)|nE\right) \lra H^2\left(E; T_{\hX}(-E)|E \otimes \scrO_E(-nE)\right),$$
where the last part of the proof shows that 
$$\dim  H^2\left(E; T_{\hX}(-E)|E \otimes \scrO_E(-nE)\right) =r-1$$ for all $n\geq 1$. This seems difficult and  most likely depends on the higher infinitesimal neighborhoods of $E$ in $\hX$.
\end{remark}

 In the Type III${}_1$ case (and also the Type III${}_2$ case if every component of $D$ meets $C$), then in fact $H^2(\hX; T_{\hX}(-E)) =0$. 

\begin{proposition}\label{prop2.12TypeIII1} In the Type III${}_1$ case,   $ H^2(\hX; T_{\hX}(-E)) =0$. 
\end{proposition}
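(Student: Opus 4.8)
The plan is to follow the same strategy as in the proof of Proposition~\ref{prop6} for the Type II case, reducing to a vanishing statement for $H^0(E; \Omega^1_{\hX}|E \otimes \scrO_E((n+2)E))$ for all $n \geq 0$. As before, by the formal functions theorem and the vanishing $H^3(\hX; T_{\hX}(-2E)) = 0$, it suffices to show $H^2(E; T_{\hX}(-E)|E \otimes \scrO_E(-nE)) = 0$ for all $n \geq 1$, and by Serre duality on $E$ this is dual to $H^0(E; \Omega^1_{\hX}|E \otimes \scrO_E((n+2)E))$. The key structural input is that in the Type III${}_1$ case $\scrO_E(E) = \omega_E = \scrO_E(-C)$, where $C$ is the effective Cartier divisor described in Definition~\ref{defcrep1}, so that $\scrO_E(mE) \cong \scrO_E(-mC)$ and there is an inclusion $\scrO_E(mE) \hookrightarrow \scrO_E$ for $m \geq 0$.

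First I would establish $H^0(E; \scrO_E(mE)) = 0$ for $m > 0$: using the normalization sequence $0 \to \scrO_E(mE) \to \bigoplus_i \scrO_E(mE)|E_i \to \bigoplus \scrO_{D_{ij}}(mE) \to 0$, and the fact that $\scrO_E(mE)|E_i \cong \scrO_{E_i}(-mC_i \text{ or } -m(C_i'+C_i''))$ has no sections while restricting isomorphically to $\scrO_{D_{ij}}$, one sees the global sections vanish exactly as in Proposition~\ref{prop6}. Then, via the conormal sequence $0 \to I_E/I_E^2 \to \Omega^1_{\hX}|E \to \Omega^1_E \to 0$ twisted by $\scrO_E((n+2)E)$, since $I_E/I_E^2 \otimes \scrO_E((n+2)E) = \scrO_E((n+1)E)$ has no global sections, we get an inclusion $H^0(\Omega^1_{\hX}|E \otimes \scrO_E((n+2)E)) \hookrightarrow H^0(E; \Omega^1_E \otimes \scrO_E((n+2)E))$. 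The torsion-free part contributes nothing because $H^0(E; (\Omega^1_E/\tau^1_E) \otimes \scrO_E((n+2)E)) \subseteq H^0(E; \Omega^1_E/\tau^1_E) = 0$ by Proposition~\ref{3dimtop}(iii).

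The crucial difference from the Type II case, and what I expect to be the main obstacle, is controlling the torsion term $H^0(E; \tau^1_E \otimes \scrO_E((n+2)E))$. By \cite[Prop.\ 1.10(2)]{F83} applied to the double curves, $\tau^1_E$ is (after accounting for reducible double curves) a sum of line bundles supported on $D$ of the form $\scrO_{\hX}(-E)|D_{ij}$, which are the restrictions $\scrO_{D_{ij}}(-C)$; these now have \emph{negative} degree since in the Type III${}_1$ case $C$ meets each $D_{i,i+1}$ in the two points $p_i', p_i''$ (recall $T^1_E \cong \scrO_D(-\sum_i(p_i'+p_i''))$). Concretely, $\tau^1_E \otimes \scrO_E((n+2)E)$ restricted to a double curve $D_{i,i+1} \cong \Pee^1$ has degree $(n+2)\deg(\scrO_{D_{i,i+1}}(E)) - \deg(\scrO_{D_{i,i+1}}(E))$-type corrections; the point is that $\scrO_{D_{i,i+1}}(E) = \scrO_{D_{i,i+1}}(-C) = \scrO_{\Pee^1}(-2)$, so the relevant twisted sheaf has degree $(n+2)(-2) + \text{(positive correction of size} \leq 2\text{)} < 0$ for $n \geq 1$, forcing $H^0 = 0$ on each component. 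One must be careful to handle the reducible double curves and the chains $C_i', C_i''$ correctly, tracking exactly which points of $D$ lie on $C$; but the essential mechanism is that $-C$ has strictly negative degree on every component of $D$, in contrast to the degree-zero situation $T^1_E \cong \scrO_D$ of Type II. Combining the vanishing of all three pieces gives $H^0(\Omega^1_{\hX}|E \otimes \scrO_E((n+2)E)) = 0$ for $n \geq 0$, hence $H^2(E; T_{\hX}(-E)|E \otimes \scrO_E(-nE)) = 0$ for $n \geq 1$, and therefore $H^2(\hX; T_{\hX}(-E)) = 0$.
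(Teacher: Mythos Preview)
Your proposal is correct and follows exactly the approach of the paper, which simply says the argument is the same as Proposition~\ref{prop6} except that $\tau^1_E \otimes \scrO_E((n+2)E)$ now has negative degree on every component of $D$; you have correctly identified this as the crucial point and filled in the surrounding details. One small inaccuracy: in the Type III${}_1$ case $\scrO_E(mE)|D_{i,i+1}$ is $\scrO_{\Pee^1}(-2m)$, not $\scrO_{D_{ij}}$, but this is harmless since $H^0(E_i;\scrO_E(mE)|E_i)=0$ already for every $i$ (each $E_i$ meets $C$), making the vanishing of $H^0(E;\scrO_E(mE))$ even easier than in the Type II argument.
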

\begin{proof} The proof is similar to the proof of Proposition~\ref{prop6},  but in this case $\tau^1_E\otimes \scrO_E((n+2)E)$ is a line bundle of negative degree on every component of $D = E_{\text{sing}}$. Thus $H^0(E; \tau^1_E\otimes \scrO_E((n+2)E))=0$.
\end{proof}

\begin{remark}\label{remark2.12TypeIII2}  A similar (but also ultimately inconclusive) analysis along the lines of  Proposition~\ref{prop6} is possible in the Type III${}_2$ case:   One can show that, in this case, 
$$\dim H^0\left(E; \tau^1_E\otimes \scrO_E((n+2)E)\right)\geq p_a(D),$$
and in particular that  $\dim H^2(\hX; T_{\hX}(-E)) \geq p_a(D)$. It is easy to see that $ p_a(D)$ is  not $0$ in general. More precisely, using \cite[Lemma 2.25(iv)]{FriedmanEngel}, one can check that $ p_a(D) = r-s$, which is typically nonzero. 
\end{remark}

\section{The good crepant case: A partial classification}\label{Section3}
Let $(X,x)$ be an isolated   singularity  of dimension $3$ with a good crepant resolution $\pi \colon \hX \to X$.  It is natural to ask  if the (reduced) exceptional divisor  $E$ of $\pi$ is of Type II, Type III${}_1$, or Type III${}_2$. We have some partial results along these lines, inspired by the arguments of Shepherd-Barron \cite{SB1, SB2}. 

 \begin{theorem}\label{somecrepresults} With notation and assumptions as above, let $t$ be a general element of $ \mathfrak{m}_{X,x}$ in the sense of Definition~\ref{definegeneral}, and assume that the divisor $S=(t)$ defined by $t$ has a simple elliptic or cusp singularity.
 \begin{enumerate}
 \item\label{somecrepresults-1} With $(\pi^*t) = E_0+\sum_{i\geq1}a_iE_i$, we have $a_i = 1$ for all $i$. Hence
$$\scrO_{\hX}\left(\sum_{i\geq1}E_i\right) \cong \scrO_{\hX}(-E_0).$$
 \item\label{somecrepresults-2} If the   hypersurface section of\, $X$ defined by $t$  is simple elliptic, then $E$ is of Type II. In particular, all components of\, $E$ are rational or elliptic ruled.
 \item\label{somecrepresults-3}   If the  hypersurface section of\, $X$ defined by $t$  is a cusp, then all components of\, $E$ are rational and every component of a double curve  $D_{ij}$ is a smooth rational curve. $($The proof gives much more detailed information about this case.$)$
 \end{enumerate}
 \end{theorem}
\begin{proof} We begin with two lemmas.

\begin{lemma}\label{lemma3} Let $S$ be  the germ of a simple elliptic singularity or a cusp singularity, with singular point $x$, and let $p\colon \widetilde{S} \to S$ be a birational morphism such that $\widetilde{S}$ is normal, $p^{-1}(x)$ is a curve, \textit{i.e.} has dimension $1$, and $K_{\widetilde{S}} =\scrO_{\widetilde{S}}(-D_0)$, where $D_0$ is an effective nonzero Weil divisor whose support is contained in $p^{-1}(x)$. 
\begin{enumerate}
\item If\, $S$ is simple elliptic, then $D_0$ is a $($reduced$)$ smooth elliptic curve, $S$ is Gorenstein, and $D_0$ is a Cartier divisor  with $D_0^2< 0$.
\item If\, $S$ is a cusp, then $D_0$ is a reduced cycle of smooth rational curves or  an irreducible  nodal curve.
\end{enumerate}
\end{lemma}
\begin{proof} First consider the simple elliptic case. There exists a resolution of singularities $T\to S$ which dominates $\widetilde{S}$, and hence $T$ is the blowup of a minimal resolution of singularities of $S$, say $T_0$ with $K_{T_0} = \scrO_{T_0}(-C)$. Thus $T$ is obtained by successively blowing up points on $T_0$. All blowups at points not on the proper transform of $C$ correspond to components of $K_T$ occurring with positive coefficients: as Cartier divisors, $K_T =  -C' + \sum_ia_ie_i$, where the $e_i$ are proper transforms of exceptional curves corresponding to blowups at points not on the proper transform of $C$ and $a_i > 0$. Let $D_0'$ be the nonzero effective Cartier divisor on $T$ which is the proper transform of $D_0$. Then 
$$K_T  = -C'   + \sum_ia_ie_i =  -D_0'   + \text{ a sum of exceptional fibers of the morphism $T \to \widetilde{S}$}.$$
 The only way this is possible is if all  of the $e_i$ are fibers of the morphism $T \to \widetilde{S}$ and $C'$ is not an exceptional fiber of the morphism. It follows   that $\widetilde{S}$ is dominated by a surface $T$, a blowup of $T_0$  where all blowups are at points on the proper transform of $C$ and $C$ itself is not blown down. Hence $\widetilde{S}$ has at worst $A_n$ singularities, so is Gorenstein, and $K_{\widetilde{S}}= \omega_{\widetilde{S}} =\scrO_{\widetilde{S}}(-D_0)$, where $D_0$ is the image of the proper transform of $C$. Thus $D_0$ is a (reduced) smooth elliptic curve and $D_0^2< 0$.

A similar argument in the cusp case shows that if $T_0$ is the minimal resolution and $K_{T_0} = \scrO_{T_0}(-\sum_k\Gamma_k)$, where $\sum_k\Gamma_k$ is a cycle of rational curves  on $T_0$, then $\widetilde{S}$ is dominated by a surface $T$, a blowup of $T_0$  where all blowups are at points on the proper transform of $\sum_k\Gamma_k$ and $\sum_k\Gamma_k$ itself is not entirely blown down. In particular, the image $D_0$ of  $\sum_k\Gamma_k$ is a reduced cycle of smooth rational curves  or  an irreducible  nodal  curve.
\end{proof}

\begin{lemma}\label{connectedanticanon} Let $T$ be a smooth algebraic surface such that $-K_T =\sum_ia_iD_i$, where the $D_i$ are irreducible curves,   $a_i> 0$, and the sum is nonempty. Then either $T$ is an elliptic ruled surface and $-K_T = \sigma'+ \sigma''$, where $\sigma', \sigma''$ are disjoint smooth elliptic curves, or $\bigcup_iD_i$ is connected.
\end{lemma}
\begin{proof}   First, if $\rho\colon T \to \overline{T}$ is the  blowdown of an exceptional curve, so that $\overline{T}$ is smooth, then it is easy to check that $-K_{\overline{T}} = \sum_ia_i\rho_*(D_i)$ and that $T$ is a blowup of $\overline{T}$ at a point of $\bigcup_i\rho_*(D_i)$. Then $\bigcup_iD_i$ is connected if and only if $\bigcup_i\rho_*(D_i)$ is connected. Thus, we may as well assume that $T$ is  minimal, and hence is either $\Pee^2$, where the result is automatic, or the blowup of a ruled surface over a curve of genus $g$. In this case, using \cite{Hartshorne} as a general reference,  let $e$ be the invariant of the ruled surface, \textit{i.e.} $-e$ is the minimal self-intersection of a curve on $T$, and let $\sigma_0$ be a curve of self-intersection $-e$. In particular, $\sigma_0$ is a section of the ruling, and 
$$K_T \equiv -2\sigma_0 + (2g-2-e)f,$$
where $f$ is the numerical equivalence class of a fiber. Also note that either $e> 0$ and $\sigma_0$ is the unique curve on $T$ with negative self-intersection, or $e\leq0$ and every curve on $T$ has nonnegative self-intersection.

If $-K_T = D'+ D''$, where $D'$ and $D''$ are disjoint and nonempty effective divisors, then at most one of $D'$, $D''$ can have negative self-intersection. Thus we can assume that $(D'')^2\geq 0$. If  $(D'')^2 > 0$, then $(D')^2< 0$ by the Hodge index theorem; hence $\sigma_0$ is a component of $D'$. Then every component of $D''$ is disjoint from $\sigma_0$, hence is numerically equivalent to a positive multiple of $\sigma_0 + ef$. It follows that $D''$ is numerically equivalent to $\sigma_0 + ef$ and $D'=\sigma_0$. Moreover, $-K_T = D'+ D''$ is a union of two disjoint smooth sections. The argument also shows that if $(D'')^2= 0$, then $(D')^2= 0$ as well, and hence $D'$ and $D''$ are numerically equivalent. In particular, since $K_T \equiv -2\sigma_0 + (2g-2-e)f$, there exist disjoint sections $\sigma'\subseteq \operatorname{Supp} D'$ and $\sigma''\subseteq \operatorname{Supp} D''$, and all remaining components of $D'$, $D''$ are fibers. Since $D'$ and $D''$ are disjoint, we must have $\sigma' =D'$ and $\sigma'' = D''$. In all cases, if $\bigcup_iD_i$ is not connected, $-K_T = \sigma'+ \sigma''$, where $\sigma', \sigma''$ are disjoint sections of $T$. Then 
$$2g(\sigma') -2= 2g(\sigma'') -2= K_S\cdot \sigma' + (\sigma')^2 = -(\sigma')^2 + (\sigma')^2 =0.$$
Hence $\sigma', \sigma''$ are disjoint smooth elliptic curves, and $T$ is elliptic ruled.
\end{proof}

Returning to the proof of Theorem~\ref{somecrepresults},  let $S$ be a general hypersurface section through $x$.  

\subsubsection*{The case where $\boldsymbol{(S,x)}$ is simple elliptic}

 Let $E_0$ be the proper transform of $S$. First we claim that $E_0$ is normal. In any case,  $E_0$ is  Gorenstein, and $\omega_{E_0}= \scrO_{E_0}(-D_0)$, where $D_0 =  \sum_{i\geq 1}a_i( E_0\cap E_i)$. Let $\nu \colon \widetilde{E_0} \to E_0$ be the normalization (necessarily Cohen--Macaulay), and suppose that $\nu$ is not an isomorphism. Then $\omega_{\widetilde{E_0}} = \omega_{\widetilde{E_0}/E_0}\otimes \nu^*\omega_{E_0}$. Since $E_0$ is Gorenstein, it must fail to be normal in codimension $1$. Then $\omega_{\widetilde{E_0}/E_0}$ is an ideal sheaf (the conductor ideal sheaf) of $\scrO_{\widetilde{E_0}}$; hence  $\omega_{\widetilde{E_0}/E_0}$ is a rank $1$ reflexive sheaf and  $\omega_{\widetilde{E_0}/E_0}\cong \scrO_{\widetilde{E_0}}(-F)$ for some effective nonzero divisor $F$ on $\widetilde{E_0}$. Since the set of nonnormal points of $E_0$ is contained in $E_0\cap E$, every component of $F$ is also a component of $\nu^{-1}D_0$. In particular, some component of $-K_{\widetilde{E_0}}$ is nonreduced. But this contradicts Lemma~\ref{lemma3}.

Thus, $E_0$ is normal.  By Lemma~\ref{lemma3}, $E_0$ is Gorenstein and $\omega_{E_0}= \scrO_{E_0}(-D_0)$, where $D_0$ is smooth elliptic and $(D_0)^2_{E_0} < 0$. In particular, 
$$\scrO_{E_0}(E_0) = \scrO_{E_0}(-D_0) = \scrO_{\hX}\left(- \sum_{i\geq 1}a_iE_i\right)|E_0.$$
Then $E_0$ meets $\bigcup_{i\geq 1}E_i$ for a unique $i$, say $i=1$, and  $a_1 =1$. Thus 
$$K_{E_1} =  \scrO_{E_1}(E_1)= \scrO_{\hX}\left(-E_0- \sum_{i\geq 2}a_iE_i\right)|E_0.$$
Moreover, $(D_0)^2_{E_1}=-(D_0)^2_{E_0} > 0$.
By Lemma~\ref{connectedanticanon}, there are two possibilities: 
\begin{enumerate}
\item $D_0 + \sum_{i\geq 2}a_i(E_1\cap E_i)$ is connected. Then $E_1\cap E_i=\emptyset$ for $i> 1$, so that $r=1$, $E=E_1$, and $K_{E_1} =  \scrO_{E_1}(-D_0)$.
\item  $E_1$ is elliptic ruled and $K_{E_1} =  \scrO_{E_1}(-D_0-\Gamma)$ for some smooth elliptic curve $\Gamma$ disjoint from $D_0$ with $\Gamma^2  < 0$ by the Hodge index theorem.
\end{enumerate}
 In the first case,    $E_1$ is rational since $K_{E_1} = (D_0)^2_{E_1} > 0$. In the second case,
$E_1$ meets $\bigcup_{i\geq 2}E_i$ for a unique $i$, say $i=2$,   $a_2 =1$, and $\Gamma = D_{12}$ with $(D_{12})^2_{E_2}= -(D_{12})^2_{E_1}>0$. Then we can repeat this analysis. Eventually, this process must terminate with a rational $E_r$. Moreover, $K_{E_r} = \scrO_{E_r}(-D_{r-1,r})$, with $(D_{r-1,r}^2)_{E_r}>0$, so that $K_{E_r}$ is nef and big. Hence we have shown that $E$ satisfies \eqref{defcrep1-1}--\eqref{defcrep1-4} of the Type II case of Definition~\ref{defcrep1}. Condition \eqref{defcrep1-5} follows since $ \sum_{i\geq 0}E_i =(\pi^*t)$ is pulled back from $X$. Note that  $i\geq 1$, 
\begin{align*}
T^1_E|D_{i,i+1} &= N_{D_{i,i+1}/E_i}\otimes N_{D_{i,i+1}/E_{i+1}}= \scrO_{\hX}(E_i+E_{i+1})|D_{i,i+1} \\
&= \scrO_{\hX}\left(\sum_{i\geq 0}E_i\right)\Big|D_{i,i+1}=\scrO_{D_{i,i+1}},
\end{align*}
as remarked after the definition of Type II.

\subsubsection*{The case where $\boldsymbol{(S,x)}$ is a cusp}

 Let $E_0$ be the proper transform of $S$. By the same arguments as in the simple elliptic case, $E_0$ is normal and Gorenstein, and $\omega_{E_0}= \scrO_{E_0}(-\sum_{k=1}^n\Gamma_k)$, where either each $\Gamma_k$ is smooth rational and the dual graph of the $\Gamma_k$ is a cycle, or $n=1$ and $C = \Gamma_1$ is irreducible. Moreover, $E_0$ is dominated by a smooth surface $\widetilde{E}_0$ which is a blowup of a minimal resolution of the cusp, and we can further assume that no fibers of $\widetilde{E}_0\to E_0$ are exceptional curves, \textit{i.e.} that $\widetilde{E}_0\to E_0$ is a minimal resolution. Thus $E_0$ is obtained from $\widetilde{E}_0$ by contracting chains of curves of self-intersection $-2$.  We have
$$\scrO_{E_0}(E_0) = \scrO_{E_0}\left(-\sum_{k=1}^n\Gamma_k\right) = \scrO_{\hX}\left(- \sum_{i\geq 1}a_iE_i\right) \Big|E_0.$$
After reindexing, we can assume that the components   $E_i$  of $E$ meeting $E_0$  are $E_1, \dots, E_s$, and the above shows that  $a_i =1$ for such $i$. Define $E_i\cap E_0 = C_i$.    Then $E_i\cap E_0 = C_i$ is a union of some of the $\Gamma_k$, and either the connected components of $C_i$ are  chains of rational curves,  or $C_i$ is a cycle of rational curves or an irreducible   curve of arithmetic genus $1$. By Lemma~\ref{connectedanticanon}, the last two cases can only arise if $r=1$ and $E =E_1$ is irreducible, and necessarily a rational surface.  If  $r\geq 2$, then   $C_i$ is a disjoint union of chains of smooth rational curves, and each $E_i$ with $1\leq i\leq s$ meets at least one other component $E_j$ for which $C_i\cap C_j\neq \emptyset$. In fact, we can reorder the $E_i$ so that, for $1\leq i\leq s-1$, $E_i$ meets $E_{i+ 1}$ with $E_i\cap E_{i+1}\cap E_0 \neq \emptyset$. Note that, as $C_i$ is reduced,  no component of $C_i$ is contained in $E_j$ for $j\neq i$, and thus $D_{ij}$ is not contained in $C_i$  for $j\neq i$.

Taking for example $i=1$, $E_1$ meets $E_2$  at a point of $C_1\cap C_2$. Then  there exists a component $G_2$ of $D_{12}$ meeting the chain $C_1$, necessarily at an end component of the chain.  Now 
$$-K_{E_1} = C_1 + D_{12} +  \sum_{j\neq 1,2}a_jD_{1j},$$
and the support $C_1 \cup D_{12} \cup\bigcup_{j\neq 1,2}D_{1j}$ is connected by Lemma~\ref{connectedanticanon}. Hence
$$(K_{E_1}+G_2)\cdot  G_2 = -(C_1\cdot G_2) -\sum_{j\neq  1,2}a_j(D_{1j} \cdot G_2)< 0.$$ 
Thus $(K_{E_1}+G_2)\cdot G_2 =-2$, and $G_2$ is a smooth rational curve. Also, $(C_1\cdot G_2)$ is either $1$ or $2$. If it is~$2$, then $G_2 =  D_{12}$, $C_1$ is connected, and $E_1$ is a rational surface with $(D_{1j} \cdot  D_{12}) =0$ for $j\neq 1,2$. This implies that $E_1\cap E_j =\emptyset$ for $j\neq 1,2$. Also, by the connectedness of $C_1$, it follows that $E_2\cap E_j =\emptyset$ for $j\neq 1,2$, and we are in the case $s=2$. So we can assume that $(C_1\cdot G) =1$, there is a unique $k\neq 1,2$, say $k=3$, such that $(D_{13} \cdot G_2)\neq 0$, and  $(D_{13} \cdot G_2)=1$ and $a_3 =1$.

 Let $G_3$ be the unique component of $D_{13}$ meeting $G_2$. Repeating this argument with  $G_3$, we see  that $G_3$ is smooth rational and that  $(C_1+ D_{12})\cdot G_3$ is either $1$ or $2$. If it is $2$, then the only possibility is that $G_3$ meets $C_1$ at the other end of the chain from $G_2$. In this case, $E_1\cap E_j =\emptyset$ for $j\neq 1,2,3$. Otherwise, we can continue this process with an $E_j$ and with  a smooth component $G_j$ of $D_{1j}$ such that  $G_j\cdot G_3\neq 0$. Eventually the curves $C_1$ and the $G_j$  must close up (although it is possible for some $E_j$ to be equal to $E_\ell$ at an intermediate stage).  We can  do this analysis for all $E_i$, $1\leq i \leq s$:  Every component of $D_{ij}$ is a smooth rational curve, and $a_j =1$ for  every $j$ such that $E_i\cap E_j\neq \emptyset$.  Moreover, the scheme-theoretic intersection of $E_0$, $E_i$, and $E_{i\pm 1}$ is a reduced point and hence is a smooth point of $E_0$. In particular, $C = \bigcup_iC_i$ is a cycle of smooth rational curves, hence has arithmetic genus $1$.

   Now let $E_k$ be a component of $E$ with $k> s$ such that $E_k\cap E_1\neq \emptyset$, say, and let $G$ be a component of $D_{1k}$. Then $a_k =1$, and $G$ is a smooth rational curve. Moreover, there exists an $\ell\neq i, j$ such that $G\cap E_\ell \neq \emptyset$ as well.  Thus  $E_1\cap E_k\cap E_\ell \neq \emptyset$, say, and
$$-K_{E_k} = D_{1k} + D_{k\ell} + \sum_{t\neq  1,k, \ell }a_tD_{kt}.$$
Let $G'$ be a component  of $D_{k\ell}$ meeting $G$. Arguments as above show that $G$ is smooth rational and that $a_t=1$ for every $t$ such that $E_t\cap G'\neq \emptyset$.  Continuing in  this way and using the connectedness of $E$, it follows that $a_i=1$ for every $i$ and that  every component of $D_{ij}$ is a smooth rational curve. Moreover, for every $i$, there exists a curve in  $|-K_{E_i}|$ whose components are rational curves. Hence $E_i$ is rational. Thus, $E$ satisfies the conditions of the second statement in Theorem~\ref{somecrepresults}\eqref{somecrepresults-3}. 
\end{proof}

\begin{remark} The proof shows that in case the hypersurface section $S$ is a cusp and using the notation of the proof,  $-K_{E_i}$ is effective and is a cycle  of smooth rational curves which contains $C_i$ if $E_i\cap E_0\neq \emptyset$.
\end{remark}

 \begin{remark}\label{somecrepresults2}\leavevmode
\begin{enumerate}[wide]
 \item It seems quite possible that, in general, $E$ might not be of Type III${}_1$ or Type III${}_2$, even after making some flops. In particular, from the point of view of the classification of algebraic varieties, it is reasonable to allow  $E$ to have more complicated singularities than normal crossings, namely dlt singularities.  
 \item  
  In the Type II case,    $\dim H^0(E; T^1_E) = r-1$ is the number of elliptic ruled components. In the Type III${}_1$ or Type III${}_2$ cases, $H^0(E; T^1_E) =0$, so that all first-order deformations of $E$ are locally trivial in these cases. This follows more generally in case the general hypersurface section of $X$ passing through $x$ is a cusp, by Theorem~\ref{somecrepresults}\eqref{somecrepresults-3} and Theorem~\ref{goodcrepdef}\eqref{goodcrepdef-2}. 
 \item   It is easy to see that if $|\Gamma|$ is the dual complex of $E$, then $H^i(|\Gamma|) =0$ for $i> 0$, and indeed a theorem of \cite{dFKX} says that $|\Gamma|$ is contractible. In the Type II and Type III${}_1$ cases, $|\Gamma|$ is a point or a line segment, and in the Type III${}_2$ case  $|\Gamma|$ is a disk.  However, in the general case, without making   flops, the topological type of the dual complex can be more complicated than a line segment or a disk. For example, it can be  a disk meeting a line segment at a point.
  \end{enumerate}
 \end{remark}
  
On the positive side, there is the following. 

\begin{theorem}\label{partialconverse} Let $(X,x)$ be an isolated   singularity  of dimension $3$ with a good crepant resolution $\pi \colon \hX \to X$, and let $E =\pi^{-1}(x)$ be the reduced exceptional divisor. 
  \begin{enumerate}
  \item\label{partialconverse-1} If the general hypersurface section of\, $X$ passing through $x$ is a cusp and $\omega_E^{-1}$ is nef and big, then $E$ is of Type III${}_1$ or Type III${}_2$.
 \item\label{partialconverse-2} If the general hypersurface section $S$ of\, $X$ passing through $x$ is a cusp and the full inverse image  $\pi^{-1}(S)$ has normal crossings, then after a sequence of flops $($elementary modifications of type 2\,$)$, $\omega_E^{-1}$ becomes nef and big, hence $E$ is of Type III${}_1$ or Type III${}_2$.  
 \end{enumerate}
 \end{theorem}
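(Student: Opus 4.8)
The plan is to read off the structure of $E$ from Theorem~\ref{somecrepresults}(iii) together with a Shepherd-Barron-style analysis of how the components of $E$ are glued, using the positivity hypothesis in (i) directly and reducing (ii) to that hypothesis by a sequence of modifications. First, the common setup. By Theorem~\ref{somecrepresults}(iii) and the remark following it, in the cusp case every $E_i$ is a rational surface, every component of a double curve $D_{ij}$ is a smooth rational curve, $a_i=1$ for all $i$ (Theorem~\ref{somecrepresults}(i)), and $-K_{E_i}$ is an effective cycle of smooth rational curves. Writing $D_i=\bigcup_{j\neq i,\,j\geq1}D_{ij}$ for the internal double locus and $C_i=E_0\cap E_i$, one has $-K_{E_i}=C_i+D_i$ with no common components, and $C=E_0\cap E=\bigcup_iC_i$ is a cycle of smooth rational curves, or an irreducible nodal curve if $r=1$. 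Since $\sum_{i\geq0}E_i$ is pulled back from $X$, $\scrO_E(E)=\scrO_E(-C)$, so $\omega_E^{-1}=\scrO_E(-E)=\scrO_E(C)$ and $\omega_E^{-1}|E_i=\scrO_{E_i}(C_i)$. If $r=1$ then $E=E_1$, $D_1=\varnothing$, and $C=-K_{E_1}$ is a cycle of smooth rational curves or a nodal curve, which is exactly the $r=1$ case of Type III${}_1$; so from now on I take $r\geq2$, and then every $E_i$ meets some $E_j$, whence $D_i\neq\varnothing$ and $C_i$ is a proper subconfiguration of $-K_{E_i}$.

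For (i), the hypothesis that $\omega_E^{-1}$ is nef and big says precisely that each $C_i$ is nef on $E_i$, and that $(C_i^2)_{E_i}>0$ for at least one $i$ (bigness of a line bundle on the reducible surface $E$ descends to some component once its restrictions are all nef). Starting from such a component and propagating along double curves, I would analyze each anticanonical pair $(E_i,\,C_i+D_i)$ using three ingredients: Lemma~\ref{connectedanticanon}, which forces the anticanonical cycle of the rational surface $E_i$ to be connected; the linear relations among the numbers $(D_{ij}^2)_{E_i}$, $(D_{ij}^2)_{E_j}$, and the number of further components meeting $D_{ij}$, obtained by restricting $\scrO_{\hX}\bigl(\sum_{i\geq0}E_i\bigr)\cong\scrO_{\hX}$ to each $D_{ij}$; and the nef-ness of $C_i$, which bounds the self-intersections of the components of $C_i$ and pins $C_i$ down as an extremal union of arcs of the cycle $-K_{E_i}$. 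Combining these should show that $D_i$ is a single arc of $-K_{E_i}$ (so $E_i$ is an ``end'' component) or a union of two arcs (so $E_i$ is an ``interior'' component), that the intersection numbers $(C_i\cdot D_{i-1,i})_{E_i}$, and so on, are exactly those listed in Definition~\ref{defcrep1}, and that the dual complex of $E$ is forced to be a line segment (Type III${}_1$) or a semi-simplicial triangulation of the $2$-disk (Type III${}_2$), with $C$ appearing as the boundary cycle. The remaining verification of the precise numerology at each double curve and triple point is then a finite case check. I expect this classification to be the main obstacle: converting ``nef and big'' into the local picture at every double curve and triple point, and in particular ruling out ``appendages'' — a disk of surfaces with extra chains attached, or over-blown-up components — is exactly where the positivity must be used in an essential way.

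For (ii), the extra hypothesis that $\pi^{-1}(S)=E_0\cup E$ has normal crossings, together with $a_i=1$, makes $f=t\circ\pi\colon\hX\to\mathbb{D}$ a semistable degeneration; and since $\pi$ is crepant, $K_{\hX}\cong\scrO_{\hX}$, so $f$ is a Kulikov-type degeneration whose general fibre is a smoothing of the cusp $(S,x)$. By the theory of semistable smoothings of cusp singularities (\cite{FriedmanMiranda}, \cite{Engel}, \cite{FriedmanEngel}), one can perform a finite sequence of elementary modifications of type $2$ — flops of $\hX$ along double curves $D_{ij}\cong\Pee^1$ lying over $x$, arranged so as to preserve smoothness of $\hX$, triviality of $K_{\hX}$, and semistability of $f$, hence keeping $\pi$ a good crepant resolution of $X$ — after which each component of $E$ becomes a minimal anticanonical pair and $\omega_E^{-1}$ becomes nef and big; then (i) applies. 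The crux here is termination together with the positivity of the output: one needs a nonnegative invariant (a charge, or a count of the ``superfluous'' non-nef curves on the components) that strictly drops under each type $2$ flop, and one must check that its minimum is attained exactly when every $C_i$ is nef and some $(C_i^2)_{E_i}$ is positive, that is, exactly when $\omega_E^{-1}$ is nef and big.
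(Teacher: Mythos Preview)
Your proposal identifies the right ingredients from Theorem~\ref{somecrepresults}, but there is a genuine gap in part~(i), and the paper's proof takes a substantially different route that you should be aware of.

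In (i) you propose to classify $E$ by a direct combinatorial analysis of the anticanonical pairs $(E_i,\,C_i+D_i)$, using nef-ness of $C_i$ and the triple-point relations. You correctly flag this as ``the main obstacle'' and then do not carry it out: the phrases ``combining these should show'' and ``is then a finite case check'' are exactly where the content lies, and you have not supplied an argument that rules out the ``appendages'' you mention or that forces the dual complex to be a segment or a disk. As written, (i) is a plan, not a proof.

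The paper avoids this direct classification entirely. Its key step in (i) is to \emph{construct a new hypersurface section} $S^*$ through $x$ whose proper transform $E_0^*$ is smooth and meets $E$ in simple normal crossings. This is done cohomologically: from $\omega_E^{-1}=\scrO_E(C)$ nef and big one gets $H^1(E;\scrO_E(-NE))=0$ for all $N\ge0$, hence $R^1\pi_*\scrO_{\hX}(-NE)=0$ by formal functions, and then the map $R^0\pi_*\scrO_{\hX}(-E)\to H^0(E;\scrO_E(C))$ is surjective. A general section of $\scrO_E(C)$ vanishes on a smooth member $C^*\in|C|$, and lifting it produces $t\in\mathfrak{m}_x$ with $\pi^{-1}(S^*)=E_0^*\cup E$ already normal crossings. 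At that point $E_0^*\cup E$ is a Type~III anticanonical pair in the sense of \cite{FriedmanMiranda}, \cite{FriedmanEngel}, and the nef-and-big hypothesis translates into $(C_i')^2_{E_i}\ge0$ for every boundary component. The final dichotomy (III${}_1$ versus III${}_2$) is then a short Hodge-index argument: if some $C_i=E_0^*\cap E_i$ is disconnected, each piece has square~$0$ and $E_i$ is a minimal ruled surface, forcing the chain picture; otherwise every $C_i$ is an irreducible $\Pee^1$ and the dual complex triangulates a disk. None of this requires the exhaustive case analysis you envisage.

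For (ii) you are headed in the right direction, but again vaguer than necessary. The paper's invariant is concrete: under the SNC hypothesis every component $C_i'$ of $C$ has $(C_i')^2_{E_0}<0$, and by the triple-point formula $(C_i')^2_{E_i}\ge0$ unless $(C_i')^2_{E_0}=(C_i')^2_{E_i}=-1$; flopping such a curve strictly decreases the number of components with $(C_i')^2_{E_0}=-1$ while preserving SNC, so the process terminates with all $(C_i')^2_{E_0}\le-2$, hence all $(C_i')^2_{E_i}\ge0$, i.e.\ $\omega_E^{-1}$ nef and big. No appeal to a general ``charge'' is needed.
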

 \begin{proof}  First assume that $\omega_E^{-1}$ is nef and big. In the notation of the proof of Theorem~\ref{somecrepresults}, $C = \bigcup_{i=1}^sC_i=E \cap E_0$ is a cycle of rational curves,   and  $\scrO_{E_0}(-C) = \omega_{E_0}=\scrO_{E_0}(-\sum_i\Gamma_i)$. Likewise, $\omega_E = \scrO_E(-C)= \scrO_E(E)$. Since $(C)^2_E = -(C )^2_{E_0} > 0$, the total degree of $\scrO_C(C)=\omega_E^{-1}|C$ is positive. Then general results on line bundles on cycles of rational curves (\textit{cf.} \cite[Lemma 1.7]{Fsurvey}) imply  that either $(C)^2_E \ge  2$ and $\scrO_C(C)$ has no base points, or $(C)^2_E =1$ and $\scrO_C(C)$ has a single base point at a smooth point of $C$. From the exact sequence
 $$0 \lra \scrO_E \lra \scrO_E(C) \lra \scrO_C(C) \lra 0,$$
 there exists a section of $\scrO_E(C)$ vanishing at   $C^* $, where $C^* = \bigcup_{i=1}^sC_i^*$ and $C_i^*$ is smooth for all $i$. 
 Since $\scrO_E(-E)|C$ is nef and has nonnegative degree on every component and positive total degree, $H^1(E;\scrO_E(-NE)) =0$ for all $N \ge 0$.  By induction,   it is then easy to see that $H^1(nE;\scrO_{\hX}(-NE)|nE)= 0$ for all $N \ge 0$ and all $n > 0$. Thus $R^1\pi_*\scrO_{\hX}(-NE) =0$ for all $N\ge 0$ by the formal functions theorem.  In particular, $R^1\pi_*\scrO_{\hX}(-2E) =0$. By applying $R^i\pi_*$ to the exact sequence
 $$0 \lra \scrO_{\hX}(-2E) \lra \scrO_{\hX}(-E) \lra \scrO_E(C) \lra 0,$$
 it follows that the natural map 
 $$R^0\pi_*\scrO_{\hX}(-E) \lra H^0(E; \scrO_E(C))$$
 is surjective. Hence, there exists an element $t\in R^0\pi_*\scrO_{\hX}(-E) \subseteq \mathfrak{m}_x$ which lifts to a function $\pi^*t$ whose restriction to $E$ is $C^*$. Thus $S^* = \{t=0\}$ defines a cusp singularity on $X$,  the proper transform $E_0^*$ of $S^*$ is a resolution of singularities of $S^*$, and $E\cup E_0^*$ has simple normal crossings. It follows from the classification of Type III degenerations of $K3$ surfaces that $E\cup E_0^*$ is a Type III anticanonical pair, i.e.\ which meets the description of  \cite[Lemma~2.14]{FriedmanMiranda}
or   \cite[Definition~2.1]{FriedmanEngel}, except that the Hirzebruch--Inoue component has been replaced by the local surface $E_0^*$.  The assumption that $\omega_E^{-1}$ is nef and big is then equivalent to the assumption that $(C_i')^2_{E_i} \ge 0$ for every component $C_i'$ of $C_i^*=E_0^*\cap E_i$.
 
 On the other hand,  under the assumptions of \eqref{partialconverse-2}, $E\cup E_0$ is again a  Type III anticanonical pair as above. Then every component $C_i'$ of $C_i=E_0\cap E_i$ is a smooth rational curve, and $(C_i')^2_{E_0} < 0$ for every~$i$. By the triple point formula, $(C_i')^2_{E_i} \ge 0$ unless $(C_i')^2_{E_0} = (C_i')^2_{E_i} = -1$. In this case, the standard flop (type 2 modification) eliminates $C_i'$ but does not alter the assumption that $\pi^{-1}(S)$ has normal crossings. In this process, the total number $i$ such that  $(C_i')^2_{E_0} =-1$ decreases, so it must ultimately terminate at a stage where $(C_i')^2_{E_0} \le -2$ for every $i$. Thus we can assume in \eqref{partialconverse-1}  and in \eqref{partialconverse-2} that $(C_i')^2_{E_i} \ge 0$ for every component $C_i'$ of $C_i=E_0\cap E_i$.
 
 In this case, suppose that there exists a component $E_1$ such that $C_1 = E_0 \cap E_1$ is disconnected. Then every component of $C_1$ has nonnegative square on $E_1$. By the Hodge index theorem, every component  of $C_1$ has square $0$ on $E_1$, and $E_1$ is a minimal rational ruled surface with the remaining double curves sections of the ruling. If $E_2$ is a component meeting $E_0$ and $E_1$, then the same analysis shows that either every component  of $C_2$ has square $0$ on $E_2$ and $E_2$ is a minimal rational ruled surface with the remaining double curves sections of the ruling, or $C_2$ is connected, hence an irreducible smooth rational curve, and $C_1$ has just two components. Continuing in this way, we see that  $E$ is of   Type III${}_1$.  In the remaining case, $E_0$ meets every component of $E$ in an irreducible smooth rational curve.
It is easy to see in this case that the dual complex of $E$ triangulates a $2$-disk and thus that $E$ is of   Type~III${}_2$.
 \end{proof}

\section{The case of a small resolution}\label{Section4} 

In this section, we  consider the case of a small resolution $\pi'\colon X' \to X$; \textit{i.e.}  $(X,x)$ is the germ of an isolated Gorenstein singularity of dimension $3$ with a good Stein representative $X$, and   $p\colon X'\to X$ is a small resolution with exceptional set  $C$.  

\begin{remark}\label{smallres} There is no real limitation to restricting to dimension $3$, at least in case the singularity $(X,x)$ is a local complete intersection. Indeed,  such resolutions can only exist for $\dim X =3$:  By the Grothendieck--Lefschetz theorem,  the local ring $\scrO_{X,x}$ of an isolated  local complete intersection singularity is a UFD for $\dim X \ge 4$.  The proof of the ``easy case'' of Zariski's Main Theorem shows that in case $\scrO_{X,x}$ is a UFD and $\pi\colon \hX \to X$ is a resolution of singularities, there exists a divisor $D$ on $\hX$ such that $\operatorname{codim} \overline{\pi(D)} \ge 2$. Thus, if $(X,x)$ is an isolated  local complete intersection singularity, and $\pi'\colon X' \to X$ is a resolution such that the exceptional set $(\pi')^{-1}(x)$ has dimension $\le \dim X - 2$, then $\dim X = 3$. 

Conversely, in dimension $3$, suppose that  $(X,x)$ is the germ of an isolated singularity with a small resolution $\pi'\colon X' \to X$. If $X$ is Cohen--Macaulay, then it follows from results of Laufer, Reid, and Pinkham (see for example \cite[Section~8]{PinkhamSurvey}) that $X$ is  a compound du Val singularity, \textit{i.e.} that the general hyperplane section of $X$ in the sense of Definition~\ref{definegeneral} is a rational double point. In particular, $X$ is a hypersurface singularity and $(\pi')^{-1}(x) =    C =\bigcup_{i=1}^rC_i$, where the $C_i$ are smooth rational curves meeting (pairwise) transversally (but three $C_i$ can meet at a point).
\end{remark}

For the case of a small resolution, the functor $\mathbf{Def}_{X'}$ has a more than purely formal meaning: By \cite[Theorem 2]{Laufer}, there is a deformation of a neighborhood of the exceptional curve $C$ over the smooth germ $(H^1(X';T_{X'}), 0)$ for which the Kodaira--Spencer map is an isomorphism.   As previously noted, $X'$ is a crepant resolution of $X$, and hence $K_{X'} \cong \scrO_{X'}$. Also, the resolution $p \colon X' \to X$ is  equivariant (\textit{cf.} Definition~\ref{defequi}).    
 
 We will use the following standard fact about local cohomology. 
 
 \begin{lemma}\label{localcohomolemma} If $\mathcal{F}$ is a locally free sheaf on $X'$, then $H^1_C(X';\mathcal{F})=0$.  
 \end{lemma}
 \begin{proof} Using the Mayer--Vietoris sequence, see \cite[Exercise~III.2.4, p.\ 212]{Hartshorne}, it suffices to show that $H^1_{C_i}(X';\mathcal{F})=0$ for every irreducible component $C_i$ of $C$ and that $H^2_p(X';\mathcal{F})=0$ for every point $p\in X'$. By \cite[Proposition~1.4]{HartshorneLC}, there is a spectral sequence with $E_1$ term $E_1^{p,q} = H^p(X'; \mathcal{H}_{C_i}^q(\mathcal{F}))$ converging to $H^{p+q}_{C_i}(X';\mathcal{F})$, where $\mathcal{H}_{C_i}^q(\mathcal{F})$ is the associated local cohomology sheaf. Since $C_i$ is smooth, it is a local complete intersection. Hence, by \cite[Proposition~3.7 and Theorem~3.8]{HartshorneLC},  $\mathcal{H}_{C_i}^q(\mathcal{F}) =0$ for $i=0,1$. Thus $H^1_{C_i}(X';\mathcal{F})=0$. The vanishing of $H^2_p(X';\mathcal{F})$ is similar. 
 \end{proof}

 Since the fibers of $p$ have dimension $1$,  $R^2p_*T_{X'} =0$ and hence $H^0_x(X;R^2p_*T_{X'})=0$. Applying the Leray spectral in local cohomology to the morphism $p$ and the sheaf $T_{X'}$ and using Lemma~\ref{localcohomolemma} to see that $H^1_C(X';T_{X'}) = 0$ gives the following.  

\begin{lemma}\label{newlemma41} There is an exact sequence
$$0 \lra  H^0\left(X;R^1p_*T_{X'}\right) \lra H^0\left(X;T^1_X\right) \lra H^2_C\left(X';T_{X'}\right) \lra  0. $$
\end{lemma}

Note that $H^0_x(X;R^1p_*T_{X'}) = H^0(X; R^1p_*T_{X'})$ and that $H^2_x(X;T^0_X) \cong H^0(X;T^1_X)$. 
Since $K_{X'}$ is trivial,   $T_{X'} \cong \Omega^2_{X'}$ and hence $H^2_C(X';T_{X'}) \cong H^2_C(X';\Omega^2_{X'})$.

The (singular) local cohomology groups $H^k_C(X')$ can be described via duality:
$$H^k_C(X') \cong H_{6-k}(C) =\begin{cases} 0 &\text{if $k\neq 4,6$,}\\
\bigoplus_iH_2(C_i) &\text{if $k= 4$,} \\
\Cee &\text{if $k=6$.}
\end{cases}$$

Moreover, there is  a spectral sequence 
$$E_1^{p,q}= H^q_C\left(X';\Omega^p_{X'}\right) \implies \mathbb{H}^{p+q}_C\left(X';\Omega^\bullet_{X'}\right) = H^{p+q}_C(X').$$
Many of the terms in the $E_1$ page of the spectral sequence are zero.

\begin{lemma} If $q=0,1$, then $H^q_C(X';\Omega^p_{X'}) =0$ for all $p$ and $H^2_C(X';\Omega^3_{X'})=  H^2_C(X';\scrO_{X'})= 0$.
\end{lemma}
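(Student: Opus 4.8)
The plan is to exploit the fact that $X'$ is smooth of dimension $3$ and $C$ has dimension $1$, so local cohomology sheaves $\mathcal{H}^q_C$ vanish outside the range $q\in\{2,3\}$; combined with duality this pins down the groups in question. First I would recall that for a coherent sheaf $\mathcal{F}$ on the smooth threefold $X'$, one has $\mathcal{H}^q_C(\mathcal{F})=0$ for $q<\operatorname{codim}(C)=2$ (since $\mathcal{F}$ is locally free, hence Cohen–Macaulay of depth $3$ along any point of $C$), and $\mathcal{H}^q_C(\mathcal{F})=0$ for $q>3$ (cohomological dimension of $C$, a proper set of dimension $1$, is at most... actually the relevant bound is $q>\dim X'$, combined with the coherent cohomology vanishing $H^q_C=0$ for $q>3$). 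This immediately gives $H^q_C(X';\Omega^p_{X'})=0$ for $q=0,1$ and all $p$, since $\Omega^p_{X'}$ is locally free: the local-to-global spectral sequence $H^i(X';\mathcal{H}^j_C(\Omega^p_{X'}))\Rightarrow H^{i+j}_C(X';\Omega^p_{X'})$ has $\mathcal{H}^0_C=\mathcal{H}^1_C=0$.

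For the remaining claim, $H^2_C(X';\scrO_{X'})=0$ and $H^2_C(X';\Omega^3_{X'})=0$, I would argue via (Grothendieck–Serre) local duality for the morphism $p\colon X'\to X$ (or directly on $X'$), using that $X'$ is a local Calabi–Yau, $\omega_{X'}\cong\scrO_{X'}$. Local duality on the Stein space gives $H^2_C(X';\scrO_{X'})$ dual to $H^1(X';\omega_{X'})=H^1(X';\scrO_{X'})$; and since $p$ is a small resolution of a rational singularity, $R^1p_*\scrO_{X'}=0$ and $H^1(X;\scrO_X)=0$ (Stein, or rational singularity), so $H^1(X';\scrO_{X'})=0$ by the Leray spectral sequence. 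Hence $H^2_C(X';\scrO_{X'})=0$. Since $\Omega^3_{X'}\cong\omega_{X'}\cong\scrO_{X'}$, the group $H^2_C(X';\Omega^3_{X'})=H^2_C(X';\scrO_{X'})=0$ as well. Alternatively one can run the same argument directly: the exact sequence of local cohomology $H^1(X';\scrO_{X'})\to H^1(X'\setminus C;\scrO_{X'})\to H^2_C(X';\scrO_{X'})\to H^2(X';\scrO_{X'})$ together with $H^1(X'\setminus C;\scrO)\cong H^1(X\setminus\{x\};\scrO_X)=0$ (since $p$ is an isomorphism away from $C$ and $X$ is Stein with depth $\geq 2$ at $x$, as a rational — hence Cohen–Macaulay — singularity) forces $H^2_C(X';\scrO_{X'})=0$.

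The main obstacle is organizing the duality/depth bookkeeping cleanly: one must make sure the vanishing $H^i(X'\setminus C;\scrO)=0$ for $i=0,1$ is justified (it follows from $X$ Stein, Cohen–Macaulay at $x$ of dimension $3$, so $\operatorname{depth}_x\scrO_X\geq 2$, giving $H^i_x(X;\scrO_X)=0$ for $i\leq 1$ and hence $H^i(X\setminus\{x\};\scrO_X)=H^i(X;\scrO_X)=0$ for $i=0,1$ by the Stein hypothesis), and that small resolutions of Gorenstein rational singularities indeed have $\omega_{X'}\cong\scrO_{X'}$ — but this is exactly the crepant/smallness input recorded at the start of Section~\ref{Section4}. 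Once those standard facts are in place, the proof is a short diagram chase; I expect no genuinely hard step, only the need to cite the right vanishing theorems (depth for the lower bound on $q$, coherent cohomological dimension and duality for the $q=2$ statements).
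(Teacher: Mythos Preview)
Your argument is correct, and your ``alternative'' route is essentially the paper's own proof. The paper handles the $q=0,1$ vanishing by ``dimension reasons'' (your codimension/depth argument) and the second claim via the Leray spectral sequence in local cohomology, $H^p_x(R^qp_*\scrO_{X'})\Rightarrow H^{p+q}_C(X';\scrO_{X'})$: rationality of $(X,x)$ kills $R^qp_*\scrO_{X'}$ for $q>0$, reducing to $H^2_x(X;\scrO_X)=0$, which holds because $\operatorname{depth}\scrO_{X,x}=3$. Your version via $X'\setminus C\cong X\setminus\{x\}$ and the long exact sequence is the same computation unwound. Two small bookkeeping fixes: you need depth $\ge 3$, not merely $\ge 2$, to kill $H^2_x(\scrO_X)$ (you do have this, since you invoke Cohen--Macaulay); and in your exact-sequence argument you should also note $H^2(X';\scrO_{X'})=0$ (same Leray plus Stein plus rational), since $H^1(X'\setminus C)=0$ alone only yields an injection $H^2_C\hookrightarrow H^2(X')$.

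Your primary route via local duality, $H^2_C(X';\scrO_{X'})^\vee\cong H^1(X';\omega_{X'})=H^1(X';\scrO_{X'})=0$, is a legitimate and slightly different path: it replaces the depth computation on $X$ by a duality statement (of the kind the paper itself invokes later in this section), trading one standard fact for another. Both approaches are equally short once the relevant vanishing is on the table.
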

\begin{proof} The first statement follows from Lemma~\ref{localcohomolemma}. The second follows by considering  the Leray spectral sequence with $E_2^{p,q}= H^p_x(X;R^qp_*\scrO_{X'}) \Rightarrow H^{p+q}_C(X';\scrO_{X'})$. Here, $R^qp_*\scrO_{X'} =0$ for $q>0$ since $(X,x)$ is a rational singularity and 
$$H^p_x\left(X;R^0p_*\scrO_{X'}\right) =  H^p_x\left(X;\scrO_X\right) = 0$$
for $p< 3$ because depth $\scrO_{X,x} = 3$.
\end{proof}   

Thus we have the following picture for the $E_1^{p,q}$ page of the spectral sequence converging to $\mathbb{H}_C^*(X'; \Omega^\bullet_{X'})= H^{p+q}_C(X')$:
\begin{center}\def\arraystretch{1.2}%
\begin{tabular}{|c|c|c|c}
$H^3_C\left(X';\scrO_{X'}\right)$ & $H^3_C\left(X';\Omega^1_{X'}\right)$ &$H^3_C\left(X';\Omega^2_{X'}\right)$& $H^3_C\left(X';\Omega^3_{X'}\right)$\\ \hline
{} & $H^2_C\left(X';\Omega^1_{X'}\right)$ & $H^2_C\left(X';\Omega^2_{X'}\right)$ &{} \\ \hline
{}&   &  &{}\\ \hline
{} &{}&{} &{}\\ \hline
\end{tabular}

\end{center}

\begin{lemma}\label{disinj}  The differential $d \colon H^2_C(X';\Omega^1_{X'}) \to H^2_C(X';\Omega^2_{X'})$ is injective.
\end{lemma}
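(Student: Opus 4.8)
The plan is to recognize the map $d$ as the $d_1$-differential
$$E_1^{1,2}=H^2_C(X';\Omega^1_{X'})\longrightarrow E_1^{2,2}=H^2_C(X';\Omega^2_{X'})$$
of the spectral sequence $E_1^{p,q}=H^q_C(X';\Omega^p_{X'})\Rightarrow H^{p+q}_C(X';\Cee)$, and then to show that $\ker(d)$ survives undisturbed to $E_\infty^{1,2}$, which lies in the total-degree-$3$ abutment and must therefore vanish.

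First I would identify $\ker(d)$ with $E_2^{1,2}$. The incoming $d_1$ at this spot is $E_1^{0,2}=H^2_C(X';\scrO_{X'})\to E_1^{1,2}$, and $H^2_C(X';\scrO_{X'})=0$ by the preceding lemma; hence $E_2^{1,2}=\ker(d_1^{1,2})=\ker(d)$. Next I would check that no higher differential meets position $(1,2)$: for $r\geq 2$ an incoming $d_r$ starts at $E_r^{1-r,\,r+1}$ with $1-r<0$, so it vanishes; an outgoing $d_r$ lands at $E_r^{1+r,\,3-r}$, which for $r=2$ is a subquotient of $E_1^{3,1}=H^1_C(X';\Omega^3_{X'})=0$ (the lemma again), for $r=3$ is a subquotient of $E_1^{4,0}=H^0_C(X';\Omega^4_{X'})=0$ since $X'$ is a threefold, and for $r\geq 4$ has negative second index. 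Therefore $E_\infty^{1,2}=E_2^{1,2}=\ker(d)$.

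Finally, $E_\infty^{1,2}$ is a subquotient of the abutment $H^3_C(X';\Cee)$, and by the duality computation recorded just above, $H^3_C(X';\Cee)\cong H_3(C)=0$ because $C$ is a (complex) curve. Hence $\ker(d)=0$. There is no real obstacle in this argument beyond keeping track of the finitely many higher differentials entering or leaving the spot $(1,2)$; the only substantive inputs are the vanishing statements of the previous lemma, the triviality $\Omega^4_{X'}=0$ on a threefold, and the identification of the abutment with $H^*_C(X';\Cee)$, all of which are already in place.
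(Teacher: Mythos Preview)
Your proof is correct and follows essentially the same approach as the paper: both identify $\ker(d)$ with a subquotient of $H^3_C(X';\Cee)=0$ via the spectral sequence. You have simply spelled out explicitly the vanishing of the incoming and outgoing higher differentials at position $(1,2)$, whereas the paper compresses this into the single remark that the kernel ``would inject into $H^3_C(X')=0$.''
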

\begin{proof}  This is clear since the kernel of $d \colon H^2_C(X';\Omega^1_{X'}) \to H^2_C(X';\Omega^2_{X'})$ would inject into \mbox{$H^3_C(X')=0$}.
\end{proof}

By examining the above spectral sequence, we see that there is a homomorphism $H^2_C(X';\Omega^2_{X'})\to H^4_C(X')$. 

\begin{proposition}\label{splitKandSsmall} The map $H^2_C(X';\Omega^2_{X'})\to H^4_C(X')$ is surjective and split by the fundamental class map. Thus, if $K_x'$ denotes the kernel of\, $H^2_C(X';\Omega^2_{X'})\to H^4_C(X')$, we have a direct sum decomposition
$$H^2_C\left(X';\Omega^2_{X'}\right) \cong K_x'\oplus  H^4_C(X')\cong K_x'\oplus \left(\bigoplus_{i=1}^r \Cee[C_i]\right).$$
\end{proposition}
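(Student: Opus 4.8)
The plan is to produce an explicit splitting of the surjection $H^2_C(X';\Omega^2_{X'})\to H^4_C(X')$ by sending each fundamental class $[C_i]$ to a natural cohomology class supported on $C_i$, and then to identify the complement with $K_x'$ by general nonsense.

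First I would establish surjectivity. From the spectral sequence picture, the only potentially nonzero $E_1$-terms in total degree $4$ are $E_1^{2,2}=H^2_C(X';\Omega^2_{X'})$ and $E_1^{1,3}=H^3_C(X';\Omega^1_{X'})$; by Lemma~\ref{disinj} the differential $d\colon E_1^{1,3}\to E_1^{2,3}$ need not vanish, but what matters is that $E_1^{2,2}$ has no incoming differentials (the terms $E_1^{0,3}$ and $E_1^{1,2}$ to its lower-left are zero by the first lemma) and the only outgoing differential $d\colon E_1^{2,2}\to E_1^{3,2}=0$ is zero. Hence $E_1^{2,2}=E_\infty^{2,2}$, so $H^2_C(X';\Omega^2_{X'})$ is a subquotient — in fact a quotient, after accounting for the filtration — that surjects onto (a graded piece of) $H^4_C(X';\Cee)=\bigoplus_i H_2(C_i)$. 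The cleanest way to see this, and to get the splitting simultaneously, is to invoke the cycle class map: each curve $C_i$ carries a fundamental cohomology class in $H^2_C(X';\Omega^2_{X'})$ (coming from the Hodge-theoretic cycle class of $C_i\subset X'$, living in the appropriate local cohomology because $C_i$ is proper and $C$-supported), and under the edge map $H^2_C(X';\Omega^2_{X'})\to H^4_C(X';\Cee)$ these map to the topological fundamental classes $[C_i]$, which form a basis of $\bigoplus_i H_2(C_i)\cong H^4_C(X')$ by the duality computation above. This shows the edge map is surjective and exhibits a right inverse, namely $[C_i]\mapsto(\text{cycle class of }C_i)$.

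Next, with the splitting in hand, I would simply set $K_x'=\ker\bigl(H^2_C(X';\Omega^2_{X'})\to H^4_C(X')\bigr)$; since we have a section, the short exact sequence
$$0\to K_x'\to H^2_C(X';\Omega^2_{X'})\to H^4_C(X')\to 0$$
splits, giving $H^2_C(X';\Omega^2_{X'})\cong K_x'\oplus H^4_C(X')$, and then substituting $H^4_C(X')\cong\bigoplus_{i=1}^r H_2(C_i)\cong\bigoplus_{i=1}^r\Cee[C_i]$ from the duality display finishes the statement.

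The main obstacle is the construction of the section, i.e.\ checking that the cycle classes of the $C_i$ genuinely live in $H^2_C(X';\Omega^2_{X'})$ and that the composite with the edge homomorphism to $H^4_C(X';\Cee)$ is the identity on the $[C_i]$-basis rather than some nontrivial change of basis or zero. This is a compatibility statement between the Hodge filtration on local cohomology and the topological cycle class; it should follow from the functoriality of cycle classes and the fact that a smooth rational curve has Hodge-theoretic cycle class concentrated in bidegree $(1,1)$ (here shifted into $(\Omega^2_{X'},H^2_C)$ by the local-cohomology/duality conventions), but one must be careful that the spectral sequence filtration places the image of $E_\infty^{2,2}$ exactly as the top (or bottom, depending on convention) graded piece of $H^4_C(X';\Cee)$ so that the edge map is the honest projection. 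Once that bookkeeping is pinned down, everything else is formal.
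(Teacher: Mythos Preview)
Your cycle-class approach is correct and is exactly what the paper does: construct Hodge-theoretic fundamental classes $[C_i]\in H^2_{C_i}(X';\Omega^2_{X'})$, push them into $H^2_C(X';\Omega^2_{X'})$, observe that they hit the topological basis of $H^4_C(X')\cong\bigoplus_i H_2(C_i)$, and conclude surjectivity plus splitting. The paper adds one small step you omit: a Mayer-Vietoris sequence
\[
0=\bigoplus_{z\in C_{\mathrm{sing}}}H^2_z(X';\Omega^2_{X'})\to\bigoplus_i H^2_{C_i}(X';\Omega^2_{X'})\to H^2_C(X';\Omega^2_{X'})
\]
to check that the individual classes $[C_i]$ really assemble to linearly independent elements of $H^2_C(\Omega^2_{X'})$. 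Since you deduce linear independence from the images in $H^4_C$, this is not strictly necessary, but it cleanly locates where the classes live.

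However, your preliminary spectral-sequence paragraph is wrong and should be dropped. Neither $E_1^{1,2}=H^2_C(X';\Omega^1_{X'})$ nor $E_1^{0,3}=H^3_C(X';\scrO_{X'})$ is claimed to vanish by the first lemma; both appear explicitly as (generally nonzero) entries in the paper's $E_1$ table. In particular there \emph{is} an incoming $d_1\colon H^2_C(\Omega^1_{X'})\to H^2_C(\Omega^2_{X'})$, and Lemma~\ref{disinj} says precisely that it is injective, hence nonzero whenever $H^2_C(\Omega^1_{X'})\neq 0$. So $E_1^{2,2}\neq E_\infty^{2,2}$ in general. Even if you computed $E_\infty^{2,2}$ correctly, it would only give you the graded piece $F^2H^4_C(X')$, and you would still need to rule out a contribution from $E_\infty^{1,3}$ to get surjectivity onto all of $H^4_C(X')$. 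The spectral sequence by itself does not do this; the cycle classes are what force surjectivity. So your instinct to pivot to the cycle-class argument was right, but the spectral-sequence warm-up is not a valid alternative route.
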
 
\begin{proof} First note that the fundamental classes of the $C_i$ are a basis for $H^4_C(X') \cong H_2(C)$. On the other hand,  for every $i$ one can construct a fundamental class $[C_i]\in H^2_{C_i}(X'; \Omega^2_{X'})$ which maps to the fundamental class $[C_i] \in H^4_C(X')$.  We also have the Mayer--Vietoris sequence
$$0= \bigoplus _{z\in C_{\text{\rm{sing}}}}H^2_z\left(X';\Omega^2_{X'}\right) \lra \bigoplus_iH^2_{C_i}\left(X';\Omega^2_{X'}\right) \to H^2_C\left(X';\Omega^2_{X'}\right),$$
and hence we can view the $[C_i]$ as linearly independent elements of $H^2_C(X';\Omega^2_{X'})$. The image of $H^2_C(X'; \Omega^2_{X'})$ in $H^4_C(X')$ therefore contains the vector space spanned by the fundamental classes of the components of $C$, and hence is equal to $H^4_C(X')$. Thus $H^2_C(X'; \Omega^2_{X'}) \to H^4_C(X')$ is surjective, and the subspace spanned by the fundamental classes of the components of $C$ is a complement to the kernel. 
\end{proof} 

Define $S_x\cong \Cee^r\subseteq H^2_C(\Omega^2_{X'})$ to be the image of the fundamental class  map. Thus $H^2_C(X';\Omega^2_{X'}) \cong K_x'\oplus S_x$. By Proposition~\ref{splitKandSsmall}, we can identify $S_x$ with $H^4_C(X')$. It can also be identified with $H^3(L)$, where $L = X -\{x\} = X'-C$ is the link of the singularity. This follows from the exact sequence
$$0 =  H^3(X') \lra H^3(U) \lra H^4_C(X') \lra H^4(X') = 0.$$
In particular, the mixed Hodge structure on $S_x \cong H^3(L)$ is of pure weight $4$ and of type $(2,2)$.

To say more about $K_x'$, we have the following. 

\begin{lemma}\label{smallsequence} Let $A_x$ be the kernel of $d\colon H^3_C(X';\scrO_{X'}) \to H^3_C(X';\Omega^1_{X'})$, and let $a = \dim A_x$. Then 
there is an exact sequence
$$0  \lra H^2_C\left(X';\Omega^1_{X'}\right) \lra K_x'   \lra A_x \lra 0.$$
Thus $\dim K_x' = \dim H^2_C(X';\Omega^1_{X'}) + a$. 
If $A_x=0$, then $d\colon H^2_C(X';\Omega^1_{X'}) \to K_x'$ is an isomorphism.
\end{lemma}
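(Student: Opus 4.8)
The plan is to read the claimed exact sequence directly off the spectral sequence
$$E_1^{p,q}= H^q_C(X';\Omega^p_{X'}) \implies H^{p+q}_C(X';\Cee)$$
together with the vanishing established in the preceding lemmas. Recall that the only possibly nonzero terms on the $E_1$ page are $H^3_C(\Omega^p_{X'})$ for $p=0,1,2,3$ in the row $q=3$, and $H^2_C(\Omega^1_{X'})$, $H^2_C(\Omega^2_{X'})$ in the row $q=2$. So the total complex degree $k=p+q$ in which $H^2_C(\Omega^2_{X'})$ and $H^2_C(\Omega^1_{X'})$ live is $k=4$ and $k=3$ respectively, while the row $q=3$ contributes to degrees $k=3,4,5,6$.

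First I would compute the $E_2$ page. In the row $q=2$, the only differential internal to that row is $d_1\colon H^2_C(\Omega^1_{X'}) \to H^2_C(\Omega^2_{X'})$, which is injective by Lemma~\ref{disinj}. Hence $E_2^{1,2}=0$, and $E_2^{2,2}=H^2_C(\Omega^2_{X'})/d_1\bigl(H^2_C(\Omega^1_{X'})\bigr)$. In the row $q=3$, the relevant differential for our purposes is $d_1\colon H^3_C(\scrO_{X'}) \to H^3_C(\Omega^1_{X'})$, whose kernel is $A_x$ by definition; so $E_2^{0,3}=A_x$. Next I would check that these classes survive to $E_\infty$: the only higher differentials out of or into $E_r^{2,2}$ (for $r\geq 2$) would land in, or come from, spots that are zero — the target $E_r^{2+r,3-r+1}$ is zero for degree reasons once $r\geq 2$ (it sits in row $q=3-r+1\leq 2$, column $\geq 4$, all of which vanish), and the source $E_r^{2-r,1+r}$ sits in column $\leq 0$ (zero) — so $E_\infty^{2,2}=E_2^{2,2}$. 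Similarly $d_2\colon E_2^{0,3}\to E_2^{2,2}$ is the one remaining differential that can hit $E_2^{2,2}$; I need to argue this $d_2$ vanishes, which should follow because $E_2^{0,3}=A_x$ already sits in total degree $3$ and maps to $E_2^{2,2}$ in total degree $4$ via $d_2$, and the only way $A_x$ can contribute nontrivially to $H^3_C(X';\Cee)=0$ is if it dies; but $A_x=\ker(d_1)$ is precisely the part of $E_1^{0,3}$ not killed by $d_1$, so it must be killed by a later differential, and the only available one lands in $E_2^{2,2}$. Wait — this is the crux: $A_x$ need not die into $E_2^{2,2}$ specifically; I should instead argue as follows. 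Since $H^3_C(X';\Cee)=0$, the associated graded of the (zero) group in total degree $3$ must vanish; the contributions are $E_\infty^{0,3}$ and $E_\infty^{1,2}$. We already have $E_2^{1,2}=0$. Therefore $E_\infty^{0,3}=0$, i.e. $E_2^{0,3}=A_x$ must be killed by $d_2\colon E_2^{0,3}\to E_2^{2,2}$, and this $d_2$ is injective. Hence $E_\infty^{2,2}=E_2^{2,2}/d_2(A_x)$.

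Now assemble the filtration on $H^4_C(X';\Cee)$. Its associated graded consists of $E_\infty^{p,q}$ with $p+q=4$: these are $E_\infty^{0,4}$ and $E_\infty^{2,2}$ (the terms $E_\infty^{1,3}$ and $E_\infty^{4,0}$ vanish since $H^3_C(\Omega^1_{X'})$ contributes to $E_\infty^{1,3}$? — no, I should double check; $E_1^{1,3}=H^3_C(\Omega^1_{X'})$ does sit in total degree $4$; but the claim in the problem is specifically about $K_x'=\ker(H^2_C(\Omega^2_{X'})\to H^4_C(X'))$, so I do not need the full filtration, only the piece coming from $E^{2,2}$). The map $H^2_C(\Omega^2_{X'})\to H^4_C(X';\Cee)$ in the statement is the edge map of the spectral sequence, which factors as $H^2_C(\Omega^2_{X'}) = E_1^{2,2} \twoheadrightarrow E_2^{2,2} \twoheadrightarrow E_\infty^{2,2} \hookrightarrow H^4_C(X';\Cee)$. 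Its kernel is therefore the preimage in $E_1^{2,2}$ of $\ker(E_2^{2,2}\to E_\infty^{2,2}) = d_2(A_x)$ under the projection $E_1^{2,2}\to E_2^{2,2}$ whose own kernel is $d_1(H^2_C(\Omega^1_{X'}))\cong H^2_C(\Omega^1_{X'})$ (using Lemma~\ref{disinj} for injectivity of $d_1$). This gives exactly the extension
$$0\to H^2_C(X';\Omega^1_{X'})\xrightarrow{\ d_1\ } K_x' \xrightarrow{\ \overline{d_2}\ } A_x\to 0,$$
where the first map is $d_1$ (injective) and the second is induced by $d_2$ (surjective since $d_2\colon A_x\to E_2^{2,2}$ lands in $d_2(A_x)$ which is the image of $K_x'$). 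Counting dimensions gives $\dim K_x' = \dim H^2_C(\Omega^1_{X'}) + a$, and when $A_x=0$ the map $d_1\colon H^2_C(\Omega^1_{X'})\to K_x'$ is an isomorphism.

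The main obstacle, and the one step I would write most carefully, is the bookkeeping that pins down \emph{which} differentials are the relevant ones and that no spurious higher differentials interfere — specifically verifying $E_2^{1,2}=0$ via Lemma~\ref{disinj}, deducing $E_\infty^{0,3}=0$ (hence $d_2\colon A_x\to E_2^{2,2}$ injective) from $H^3_C(X';\Cee)=0$, and checking that $E_\infty^{2,2}=E_2^{2,2}/d_2(A_x)$ with no contribution from $d_r$, $r\geq 3$, by the degree/position vanishing of all the other $E_1$ terms recorded in the displayed picture. Once that vanishing is in hand, the exact sequence and the dimension count are immediate, and the final sentence ($A_x=0\Rightarrow d_1$ an isomorphism) is just the statement that a surjection onto the zero group has the identity as kernel inclusion.
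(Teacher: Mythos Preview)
Your argument is correct and follows exactly the approach of the paper: the paper's proof is the single sentence ``$d_2\colon A_x \to H^2_C(\Omega^2_{X'})/d(H^2_C(\Omega^1_{X'}))$ is injective and its image is $K_x'/d(H^2_C(\Omega^1_{X'}))$,'' and you have simply unpacked the spectral-sequence bookkeeping behind that sentence (injectivity of $d_1$ from Lemma~\ref{disinj}, $E_\infty^{0,3}=0$ from $H^3_C(X';\Cee)=0$, vanishing of higher $d_r$ into or out of $E_r^{2,2}$). One cosmetic remark: the map $K_x'\to A_x$ in your exact sequence is the \emph{inverse} of the injection $d_2\colon A_x\hookrightarrow E_2^{2,2}$ composed with the projection $K_x'\to E_2^{2,2}$, so calling it ``induced by $d_2$'' is slightly loose, but the content is right.
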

\begin{proof} By examining the spectral sequence, we see that $d_2\colon A_x \to H^2_C(X';\Omega^2_{X'})/d(H^2_C(X';\Omega^1_{X'}))$ is injective and that its image is $K_x'/d(H^2_C(X';\Omega^1_{X'}))$. Hence $A_x \cong K_x'/d(H^2_C(X';\Omega^1_{X'}))$. The remaining statements of the lemma are  clear.
\end{proof}

\begin{corollary}\label{smallfiltration} In the above notation,   there is an exact sequence
$$0 \lra (K_x')\spcheck/A_x\spcheck \lra H^0\left(X;T^1_X\right) \lra K_x' \oplus  S_x \lra 0.$$
\end{corollary}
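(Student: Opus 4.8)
The plan is to feed the exact sequence
$$0 \to H^0(X;R^1p_*T_{X'}) \to H^0(T^1_X) \to H^2_C(X';T_{X'}) \to 0$$
obtained above into identifications of its two outer terms. The right-hand term is already understood: since $T_{X'}\cong \Omega^2_{X'}$ we have $H^2_C(X';T_{X'})\cong H^2_C(X';\Omega^2_{X'})$, and by Proposition~\ref{splitKandSsmall} the latter is $K_x'\oplus S_x$. So the whole content of the Corollary is the statement that
$$H^0(X;R^1p_*T_{X'}) \cong (K_x')\spcheck/A_x\spcheck .$$

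To prove this I would first move the source of the sequence up to $X'$: because $X$ is Stein, $H^0(X;R^1p_*T_{X'})\cong H^1(X';T_{X'})\cong H^1(X';\Omega^2_{X'})$, using $T_{X'}\cong \Omega^2_{X'}$ once more. Next I would rewrite the target: dualizing the exact sequence $0\to H^2_C(X';\Omega^1_{X'})\to K_x'\to A_x\to 0$ of Lemma~\ref{smallsequence} (all three are finite-dimensional) gives $0\to A_x\spcheck \to (K_x')\spcheck \to H^2_C(X';\Omega^1_{X'})\spcheck \to 0$, hence $(K_x')\spcheck/A_x\spcheck \cong H^2_C(X';\Omega^1_{X'})\spcheck$. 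Thus it suffices to produce a natural isomorphism
$$H^1(X';\Omega^2_{X'}) \cong H^2_C(X';\Omega^1_{X'})\spcheck .$$

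This last isomorphism is relative (formal) Serre duality for the holomorphically convex manifold $X'$ lying over the Stein germ $X$. The crucial point is that $p\colon X'\to X$ is a small resolution of a Gorenstein singularity, so $\omega_{X'}\cong \scrO_{X'}$; consequently $(\Omega^1_{X'})\spcheck\otimes \omega_{X'}\cong \Omega^2_{X'}$, and duality along the compact curve $C=p^{-1}(x)$ pairs $H^i_C(X';\Omega^1_{X'})$ with $H^{3-i}(X';\Omega^2_{X'})$. For $i=2$ both sides are finite-dimensional: the left-hand side because it is local cohomology supported on the compact set $C$, and $H^1(X';\Omega^2_{X'})=H^0(X;R^1p_*\Omega^2_{X'})$ because $R^1p_*\Omega^2_{X'}$ is a coherent sheaf supported at $x$. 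Composing the three displayed isomorphisms identifies $H^0(X;R^1p_*T_{X'})$ with $(K_x')\spcheck/A_x\spcheck$, and substituting into the exact sequence at the start of the argument yields the claimed sequence.

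I expect the delicate point to be the duality isomorphism $H^1(X';\Omega^2_{X'})\cong H^2_C(X';\Omega^1_{X'})\spcheck$: one must pin down the correct form of relative Grothendieck--Serre duality in the holomorphically convex setting and keep careful track of the twist by $\omega_{X'}$, which disappears precisely because the resolution is crepant. One should also check that the resulting isomorphism is compatible with the maps occurring in the exact sequences — which it will be, since all the groups in play are built from the Leray spectral sequence of $p$ and the local-cohomology spectral sequence $E_1^{p,q}=H^q_C(X';\Omega^p_{X'})$, so no auxiliary splitting has to be chosen.
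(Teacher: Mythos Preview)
Your proposal is correct and follows essentially the same route as the paper: start from the exact sequence $0 \to H^0(X;R^1p_*T_{X'}) \to H^0(T^1_X) \to H^2_C(X';T_{X'}) \to 0$, identify the right-hand term with $K_x'\oplus S_x$ via Proposition~\ref{splitKandSsmall}, and identify the left-hand term with $(K_x')\spcheck/A_x\spcheck$ by combining the duality $H^0(X;R^1p_*T_{X'})\cong H^2_C(X';\Omega^1_{X'})\spcheck$ with the dual of Lemma~\ref{smallsequence}. The paper simply cites \cite{karras} for the duality step you spell out as relative Serre duality on the holomorphically convex $X'$ with $\omega_{X'}\cong\scrO_{X'}$; your more detailed justification is fine, and your cautionary remark about compatibility is not really needed here since one is only relabeling the outer terms of a fixed short exact sequence.
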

\begin{proof}
By duality,  arguing as in \cite{karras}, 
$$H^0\left(X;R^1p_*T_{X'}\right) \cong H^2_C\left(X';\Omega^1_{X'}\right)\spcheck \cong (K_x')\spcheck/A_x\spcheck.$$
The proof then follows from Proposition~\ref{splitKandSsmall} and Lemma~\ref{smallsequence}.
\end{proof}

\begin{corollary}\label{smallnumerics} Let $b=\dim K_x'$. Then
\begin{enumerate}
\item\label{sn-1} $\dim H^2_C(X';\Omega^2_{X'}) = b+r$; 
\item\label{sn-2} $\dim H^2_C(X';\Omega^1_{X'}) = \dim H^0(X; R^1p_*T_{X'}) =  b-a$; 
\item\label{sn-3} $\dim H^0(X;T^1_X) = 2b-a+r$; thus $b+r \leq \dim H^0(X;T^1_X) \leq 2b+r$.
\end{enumerate}
\end{corollary}
\begin{proof} These follow from Proposition~\ref{splitKandSsmall}, Lemma~\ref{smallsequence}, and Corollary~\ref{smallfiltration} (and its proof). 
\end{proof}

\begin{theorem}\label{charodp} In the above situation, the following are equivalent:
\begin{enumerate}
\item\label{charodp-1} $C$ is smooth, \textit{i.e.} $r=1$, and the normal bundle satisfies  $N_{C/X'} \cong \scrO_{\Pee^1}(-1) \oplus  \scrO_{\Pee^1}(-1)$. In other words, $(X,x)$ is the germ of an ordinary double point.
\item\label{charodp-2} $\dim H^2_C(X';\Omega^2_{X'}) = r$, \textit{i.e.} $K_x' =0$, or equivalently $b=0$. 
\item\label{charodp-3} $R^1p_*T_{X'} = 0$, or equivalently $b=a$.
\end{enumerate}
\end{theorem} 
\begin{proof} \eqref{charodp-1} $\Rightarrow$ \eqref{charodp-2}~ In this case, $\dim H^0(X;T^1_X)=1 = 2b-a + 1$. Since $b\geq 0$ and $b-a =\dim H^2_C(X';\Omega^1_{X'})\geq 0$, we must have $b=0$, and hence $K_x' =0$.

\smallskip
\noindent \eqref{charodp-2} $\Rightarrow$ \eqref{charodp-3}~ By  Corollary~\ref{smallnumerics}\eqref{sn-2}, $\dim H^0(X;R^1p_*T_{X'}) =   b-a$. If $K_x' =0$, then $b=\dim K_x'=0$,   hence $a=0$ and $R^1p_*T_{X'} = 0$.

\smallskip
\noindent \eqref{charodp-3} $\Rightarrow$ \eqref{charodp-1}~ Following the discussion in \cite[Section~2, pp.\ 678--679]{F}, there exists a small deformation of $X'$ to $X'_t$ where the exceptional curve $C$ splits up into a union of $\delta$ disjoint copies of $\Pee^1$ with normal bundle $\scrO_{\Pee^1}(-1) \oplus  \scrO_{\Pee^1}(-1)$, and such a deformation blows down to a deformation of $X$ to a union of $\delta$ ordinary double points. But if $R^1p_*T_{X'} = 0$, then the only deformations of $X'$  
are locally trivial. Hence  the exceptional curve $C$ on $X'$ is already a single $\Pee^1$ with normal bundle $\scrO_{\Pee^1}(-1) \oplus  \scrO_{\Pee^1}(-1)$, and  $(X,x)$ is  the germ of an ordinary double point.
\end{proof}

\begin{proposition}\label{deltaconj}   Suppose that  there is a small deformation of $X'$ to a space   with exactly $\delta$ compact curves and that all of these have normal bundle   $\scrO_{\Pee^1}(-1) \oplus \scrO_{\Pee^1}(-1)$. Then $\dim H^2_C(X';\Omega^2_{X'}) =\delta$. 
 \end{proposition}
 \begin{proof} Let $(S,s_0)$ be the germ of a smooth analytic space prorepresenting $\mathbf{Def}_X$, and let $(S',s_0')$ prorepresent $\mathbf{Def}_{X'}$. The morphism of functors $\mathbf{Def}_{X'} \to \mathbf{Def}_X$ then induces a morphism of germs $S'\to S$ which is an immersion by \cite[Proposition 2.1]{F}.  By Lemma~\ref{newlemma41}, $H^2_C(X';\Omega^2_{X'})$ is the normal bundle to this immersion, so it will suffice to prove that the image of $S'$ has codimension $\delta$. As noted in the proof of Theorem~\ref{charodp}, there is an open dense subset of $S'$ corresponding to a germ with exactly $\delta$ singularities, all of which are ordinary double points. By the openness of versality, the image of $S'$ in $S$ then has codimension $\delta$ as claimed. 
 \end{proof}

\begin{corollary}[\textit{cf.} \protect{\cite[Lemma\ 1.9]{namstrata}}]
 We have $b+r =\delta$. Thus $\delta \geq r$, with equality if and only if      $(X,x)$ is an ordinary double point. \qed
\end{corollary}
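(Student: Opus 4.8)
The plan is to simply combine the two independent computations of $\dim H^2_C(X';\Omega^2_{X'})$ that have already been established in this section. On the one hand, Corollary~\ref{smallnumerics}(i) gives
$$\dim H^2_C(X';\Omega^2_{X'}) = b+r,$$
where $b=\dim K_x'$. On the other hand, Proposition~\ref{deltaconj} identifies this same dimension with $\delta$, the number of ordinary double points appearing in a general deformation of $(X,x)$ in the simultaneous resolution locus. Equating the two yields $b+r=\delta$ at once; in particular this also shows that $\delta$ is intrinsic to the germ, independent of the chosen deformation.

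For the inequality, I would just observe that $b=\dim K_x'\geq 0$ by definition, so $\delta=b+r\geq r$, with equality precisely when $b=0$. To identify this equality case geometrically, I would invoke the equivalence (i)~$\iff$~(ii) of Theorem~\ref{charodp}: $b=0$ (equivalently $K_x'=0$) holds if and only if $C$ is a single smooth rational curve with normal bundle $\scrO_{\Pee^1}(-1)\oplus\scrO_{\Pee^1}(-1)$, i.e.\ $(X,x)$ is the germ of an ordinary double point.

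There is essentially no obstacle here: the corollary is a bookkeeping consequence of Corollary~\ref{smallnumerics}, Proposition~\ref{deltaconj}, and Theorem~\ref{charodp}, which is why it is stated with \qed in the text. The only point worth flagging is that Proposition~\ref{deltaconj} presupposes the existence of a deformation of $(X,x)$ admitting a simultaneous resolution and degenerating to exactly $\delta$ ordinary double points; but this is precisely the hypothesis under which $\delta$ is defined, and its existence is the input supplied by the discussion following \cite[pp.\ 678--679]{F} already used in the proof of Theorem~\ref{charodp}(iii).
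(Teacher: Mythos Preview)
Your proposal is correct and matches what the paper intends: the corollary is marked with \qed precisely because it follows immediately by combining Corollary~\ref{smallnumerics}(i), Proposition~\ref{deltaconj}, and the equivalence (i)\,$\iff$\,(ii) of Theorem~\ref{charodp}, exactly as you spell out.
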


We now compare the above discussion with the case of a good resolution, as described in \cite[Theorem 2.1]{FL}. By successively blowing up the curves $C_i$ in some order, we obtain a good resolution $\pi\colon \hX \to X$ which is an iterated blowup of $X'$. Let $\rho\colon \hX \to X'$ be the blowup morphism and $E=\bigcup_iE_i$ the exceptional divisor of $\rho$ or of $\pi$. To distinguish groups on $X'$ and on $\hX$, we denote the latter with a ``\, $\widehat{}$ \,.''  Then we have the group $\widehat{K}_x'$ defined in \cite[Theorem 2.1(vi)]{FL}: 
$$\widehat{K}_x' =\Ker\left\{H^2_E\left(\hX; \Omega^2_{\hX}\right) \lra H^2\left(\hX; \Omega^2_{\hX}\right)\right\}.$$
We also have $\Gr_F^2H^3(L) = H^3(L)$ and $\widehat{A}_x = \Ker\{d\colon H^3(\hX;\scrO_{\hX}) \to H^3_E(\hX; \Omega^1_{\hX})\}$. (In \cite[Corollary 1.8]{FL}, $\widehat{A}_x$ is defined to be the kernel of $d\colon H^3(\hX;\scrO_{\hX}) \to H^3_E(\hX; \Omega^1_{\hX}(\log E))\}$, but it is easy to check that in our case $H^3_E(\hX; \Omega^1_{\hX}) \to H^3_E(\hX; \Omega^1_{\hX}(\log E))$ is injective.) By \cite[Theorem 2.1(v), (vi)]{FL}, there is an exact sequence
$$0 \lra H^1\left(\hX; \Omega^2_{\hX}(\log E)(-E)\right) \lra H^0\left(X;T^1_X\right) \lra \widehat{K}_x' \oplus  H^3(L) \lra 0$$
with $H^1(\hX; \Omega^2_{\hX}(\log E)(-E))\spcheck \cong H^2_E(\hX;\Omega^2_{\hX}(\log E))$, and by \cite[Corollary 1.8 and Theorem 2.1(iv)]{FL}, there is  an exact sequence 
$$0\lra H^2_E\left(\hX;\Omega^2_{\hX}(\log E)\right) \lra \widehat{K}_x' \lra \widehat{A}_x \lra 0.$$
Hence $H^1(\hX; \Omega^2_{\hX}(\log E)(-E)) \cong (\widehat{K}_x')\spcheck/\widehat{A}_x\spcheck$. 

\begin{proposition}\label{smallvbig} Let $\rho\colon \hX  \to X$ be a good resolution of $X$ which is obtained by successively blowing up the curves $C_i$ in some order. 
\begin{enumerate}
\item\label{smallvbig-1} There  are isomorphisms $H^1(\hX; \Omega^2_{\hX}) \cong H^1(X'; \Omega^2_{X'}) \cong  H^1(X'; T_{X'})$, compatible with the natural homomorphisms to $H^1(U;T_U) \cong H^0(X; T^1_X)$.
\item\label{smallvbig-2} Let $\widehat{K}_x'$, $H^3(L)$, and $\widehat{A}_x$ be the groups described above. Then $\widehat{K}_x\cong K_x'$, $H^3(L)\cong S_x$, and $\widehat{A}_x \cong A_x$, so that the following diagram commutes:
$$\begin{CD}
0 @>>> \left(\widehat{K}_x'\right)\spcheck/\widehat{A}_x\spcheck @>>>  H^0\left(X;T^1_X\right) @>>>  \widehat{K}_x' \oplus  H^3(L) @>>> 0\\
@. @VVV @| @VVV @. \\
0 @>>> (K_x')\spcheck/A_x\spcheck @>>>  H^0\left(X;T^1_X\right) @>>>  K_x' \oplus  S_x @>>> 0\rlap{.}
\end{CD}$$
\end{enumerate}
\end{proposition}
\begin{proof} \eqref{smallvbig-1}~   By standard results, $R^0\rho_*\Omega^k_{\hX } = \Omega^k_{X'}$, $R^i\rho_*\scrO_{X'} =0$ for $i> 0$,  and $R^i\rho_*\Omega^k_{\hX } =0$ for $i\geq 2$. As for $R^1\rho_*\Omega^k_{\hX }$, first suppose that $C$ is smooth. Then, by  a standard argument (\textit{cf.} \cite[IV(1.2.1)]{Gros}), $R^1\rho_*\Omega^1_{\hX } \cong \scrO_C$ and $R^1\rho_*\Omega^2_{\hX} \cong \Omega^1_C$. In the general case, a successive application of the Leray spectral sequence for the iterated blowup shows that $R^1\rho_*\Omega^1_{\hX }$ has a filtration whose successive quotients are $\scrO_{C_i}$, $1\leq i\leq r$, and similarly $R^1\rho_*\Omega^2_{\hX }$ has a filtration whose successive quotients are $\Omega^1_{C_i}$, $1\leq i\leq r$. In particular, $H^0(X'; R^1\rho_*\Omega^2_{\hX}) = 0$. Thus, by the Leray spectral sequence, $H^1(\hX; \Omega^2_{\hX}) \cong H^1(X'; R^0\rho_*\Omega^2_{X'}) \cong H^1(X'; \Omega^2_{X'})$, and the remaining statements in \eqref{smallvbig-1} are clear.

\smallskip
\eqref{smallvbig-2}~  By the proof of \eqref{smallvbig-1}, $H^1_C(X';R^1\rho_*\Omega^1_{\hX }) = H^0_C(X';R^1\rho_*\Omega^2_{\hX }) =0$, and  $\dim H^0_C(X'; R^1\rho_*\Omega^1_{\hX })  =\dim  H^1_C(X'; R^1\rho_*\Omega^2_{\hX }) = r$. Similarly, $R^0\rho_*\Cee_{\hX } =\Cee_{X'}$, $R^2\rho_*\Cee_{\hX }$ is a successive extension of the $\Cee_{C_i}$, and otherwise $R^i\rho_*\Cee_{\hX }  =0$. In particular, $H^1_C(X';R^1\rho_*\Omega^2_{\hX }) \cong H^2_C(X';R^2\rho_*\Cee_{\hX })$. 

The Leray spectral sequence in local cohomology gives a spectral sequence with $$E_2^{a,b} = H^a_C\left(X';R^b\rho_*\Omega^2_{\hX }\right) \implies H^{a+b}_E\left(\hX; \Omega^2_{\hX }\right).$$
The only nonzero terms are $H^1_C(X';R^1\rho_*\Omega^2_{\hX })$ and $H^2_C(X';R^0\rho_*\Omega^2_{\hX })=H^2_C(X';\Omega^2_{X'})$. It follows that
$H^1_E(\hX; \Omega^2_{\hX }) =0$  and there is a commutative diagram
$$\begin{CD}
0 @>>> H^2_C\left(X';\Omega^2_{X'}\right) @>>> H^2_E\left(\hX; \Omega^2_{\hX }\right) @>>> H^1_C\left(X';R^1\rho_*\Omega^2_{\hX }\right) @>>> 0 \\
@. @| @| @VV{\cong}V  @.\\
 @. K_x' \oplus \Cee^r @>>> \widehat{K}_x' \oplus H^4_E\left(\hX \right) @>>> \Cee^r\rlap{.} @.
\end{CD}$$
Here, we use the Leray spectral sequence to also conclude that there is an exact sequence
$$\begin{CD}
0 @>>> H^4_C( X') @>>> H^4_E\left(\hX \right) @>>> H^2_C\left(X';R^2\rho_*\Cee_{\hX }\right) @>>> 0 \\
@. @| @. @VV{\cong}V  @.\\
 @. \bigoplus_i\Cee[C_i] @. @. H^1_C\left(X';R^1\rho_*\Omega^2_{\hX }\right)\rlap{.} @.  
 \end{CD}$$
Tracing through the identifications gives $\widehat{K}_x'\cong K_x'$. By definition,  $H^3(L)\cong \bigoplus_i\Cee[C_i] = S_x$. The remaining identification of $\widehat{A}_x$ with $A_x$ is similar, using the Leray spectral sequence to conclude that $H^3_E(\hX;\scrO_{\hX }) \cong H^3_C(X';\scrO_{X'})$ and $H^3_E(\hX;\Omega^1_{\hX }) \cong H^3_C(X';\Omega^1_{X'})$, compatibly with $d$.
\end{proof}

Recall that a $\Cee^*$ action on the germ of an analytic space $(X,x)$ is \textsl{good} if the weights of the induced action on the Zariski tangent space of the fixed point $x$ are all positive. Applying  the case of a good (divisorial) resolution, \textit{cf.} \cite[Theorem 2.1(iv)]{FL}, and using the fact that $X$ is a hypersurface singularity, hence a local complete intersection singularity, we have the following corollary. 

\begin{corollary} The germ $(X,x)$  has a good $\Cee^*$ action if and only if  $A_x= 0$, which holds if and only if $a=0$, which holds if and only if the spectral sequence with $E_1^{p,q} = H^q_C(X';\Omega^p_{X'}) \Rightarrow \mathbb{H}_C^{p+q}(X'; \Omega^\bullet_{X'}) = H_C^{p+q}(X')$ degenerates at $E_2$. 
\end{corollary}

The invariants $b$ and $a$ can also be described in terms of the Du Bois invariants $b^{p,q}(X,x)$ of the singularity $X$, using \cite[Theorem 2.1(iv), (vi)]{FL}. 

\begin{corollary} We have $b = b^{1,1}(X,x)$ and $b-a = b^{2,1}(X,x)$. 
\end{corollary}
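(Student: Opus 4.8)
The plan is to reduce the corollary to the statement of \cite[Theorem 2.1(iv) and (vi)]{FL}, which expresses the Du Bois invariants $b^{p,q}(X,x)$ in terms of groups attached to a good divisorial resolution, and then transport those identifications to the small-resolution side via Proposition~\ref{smallvbig}. Recall that for an isolated singularity $(X,x)$ of dimension $3$, the relevant Du Bois invariants are $b^{1,1}(X,x)$ and $b^{2,1}(X,x)$; by \cite[Theorem 2.1(iv), (vi)]{FL} applied to the good resolution $\rho\colon \hX\to X$ obtained by blowing up the $C_i$, one has $b^{1,1}(X,x)=\dim\widehat K_x'$ and $b^{2,1}(X,x)=\dim\widehat K_x'-\dim\widehat A_x$ (the second equality coming from the exact sequence $0\to H^2_E(\hX;\Omega^2_{\hX}(\log E))\to \widehat K_x'\to\widehat A_x\to 0$ together with the identification of $H^2_E(\hX;\Omega^2_{\hX}(\log E))$ with the relevant graded piece governing $b^{2,1}$).

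First I would invoke Proposition~\ref{smallvbig}, which already provides the isomorphisms $\widehat K_x'\cong K_x'$ and $\widehat A_x\cong A_x$ compatibly with all the maps in sight. Combining this with the definitions $b=\dim K_x'$ and $a=\dim A_x$ (introduced in Lemma~\ref{smallsequence} and Corollary~\ref{smallnumerics}), we immediately get $b=\dim\widehat K_x'=b^{1,1}(X,x)$ and $b-a=\dim\widehat K_x'-\dim\widehat A_x=b^{2,1}(X,x)$. So the corollary is essentially a bookkeeping consequence of the already-established dictionary between the small-resolution invariants and the good-resolution invariants.

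The only genuine point to check is that the quantities $\dim\widehat K_x'$ and $\dim\widehat K_x'-\dim\widehat A_x$ are indeed what \cite[Theorem 2.1(iv), (vi)]{FL} calls $b^{1,1}(X,x)$ and $b^{2,1}(X,x)$; this is a matter of matching the notation of the cited theorem (which is phrased for a general good divisorial resolution of an isolated rational Gorenstein singularity) against the present setup, and of verifying that the good resolution $\hX$ obtained from the iterated blowup of $X'$ satisfies the hypotheses of that theorem — it does, since $(X,x)$ is a compound du Val, hence rational Gorenstein, singularity. I expect this translation of notation to be the main (and essentially only) obstacle: there is no new geometry, just the need to align the indexing conventions for the Du Bois complex between \cite{FL} and the current paper, and to confirm that the remark in the paragraph preceding Proposition~\ref{smallvbig} (about $H^3_E(\hX;\Omega^1_{\hX})\to H^3_E(\hX;\Omega^1_{\hX}(\log E))$ being injective) is what makes the two possible definitions of $\widehat A_x$ agree, so that the formula for $b^{2,1}$ is unambiguous. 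Once that is in place, the two displayed equalities follow by inspection, and the proof is complete.
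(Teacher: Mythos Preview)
Your proposal is correct and takes essentially the same approach as the paper: the paper gives no proof beyond the \qed symbol and the sentence preceding the statement, which points to \cite[Theorem 2.1(iv), (vi)]{FL}, so the intended argument is exactly the one you wrote out---identify the Du Bois invariants with $\dim\widehat{K}_x'$ and $\dim\widehat{K}_x'-\dim\widehat{A}_x$ via the cited result, and then transport these to $b$ and $b-a$ using the isomorphisms $\widehat{K}_x'\cong K_x'$ and $\widehat{A}_x\cong A_x$ of Proposition~\ref{smallvbig}. Your observation about the two definitions of $\widehat{A}_x$ agreeing is also exactly the point flagged in the paragraph before Proposition~\ref{smallvbig}.
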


\begin{remark}\label{rem-dim} In particular, by Corollary~\ref{smallnumerics}\eqref{sn-3} and the fact that $\ell = \ell^{2,1} =r$, we recover the result of Steenbrink, \textit{cf.} \cite[Theorem 4]{SteenbrinkDB},  that
  $$\dim H^0\left(X;T^1_X\right) = b^{1,1}(X,x) + b^{2,1}(X,x) + \ell^{2,1},$$
 the inequalities  due to Namikawa, \textit{cf.}   \cite[Theorem 1]{NamikawadefCY}, 
$$b^{1,1}(X,x) + \ell^{2,1}\leq \dim H^0\left(X;T^1_X\right)   \leq 2b^{1,1}(X,x) + \ell^{2,1},$$
as well as the statement that $\dim H^0(X;T^1_X)  =2 b^{1,1}(X,x) + \ell^{2,1}$ if and only if $(X,x)$ is weighted homogeneous (since  $(X,x)$ is a local complete intersection).   As shown in the papers of Steenbrink and Namikawa cited above and \cite[Theorem 2.1(iv)]{FL}, these results  hold  more generally  for  isolated rational Gorenstein singularities of dimension $3$. 
\end{remark}

\begin{example}\label{ex-a2n-1} Consider the $A_{2n-1}$ singularity $x^2+y^2+z^2+ w^{2n}$ (a compound $A_1$ singularity). Here, $T^1_X \cong \Cee[w]/(w^{2n-1})$ has dimension $2n-1$ and $r=1$. A calculation shows that $\dim H^0(X;R^1p_*T_{X'}) = n-1$. Hence $\dim H^2_C(X';\Omega^2_{X'}) = n$ by Lemma~\ref{newlemma41}. Also note  that we can deform $\hX$ so that  the exceptional curve $C$ breaks up into a union of $n$ curves with normal bundle $\scrO_{\Pee^1}(-1) \oplus \scrO_{\Pee^1}(-1) $.  This deformation then blows down to a deformation of $X$  to the union of $n$ ordinary double points; \textit{i.e.} $\delta = n$ in the notation of Proposition~\ref{deltaconj}.
\end{example}

\begin{example}\label{ex-cA} Consider the compound $A_{n-1}$ singularity $x^2+y^2 + f(z,w)$, where $f(z,w)=\prod_{i=1}^n(z+\lambda_iw)$ defines a plane curve which is the union of $n$ distinct lines meeting at the origin. An easily computable example is $f(z,w) = z^n-w^n$. Thus
$$T^1_X\cong \Cee[z,w]/\left(z^{n-1}, w^{n-1}\right).$$
A calculation shows that $\dim T^1_X = (n-1)^2$. Note that, for $n\geq 4$,   $(X,0)$ has nontrivial equisingular deformations (there are local moduli). This is also reflected in the fact that there are nontrivial weight zero deformations for $n\geq 4$ (and nontrivial positive weight deformations for $n\geq 5$).  By Corollary~\ref{smallnumerics}, since $a=0$ and $r=n-1$, $\dim H^0(X;R^1p_*T_{X'}) =(n-1)(n-2)/2$ and $\dim H^1_C(X';\Omega^2_{X'}) = n(n-1)/2$. 

The surface $X_0$ defined by $w=0$ is an $A_{n-1}$ singularity, and the inverse image $X'_0$ in $X'$ is a resolution of singularities. Moreover, all of the components $C_i$ in $X'$ have normal bundle $\scrO_{\Pee^1}(-1)\oplus \scrO_{\Pee^1}(-1)$. By general results (\textit{e.g.} \cite{Brieskorn}, \cite{Artin}, or the paper by Pinkham \cite{Pinkham}), there is a morphism of functors $\mathbf{Def}_{X_0'} \to \mathbf{Def}_{X_0}$ with the following property: If  $T$ is the analytic germ prorepresenting the functor $\mathbf{Def}_{X_0}$ and $\widetilde{T}$ is the germ prorepresenting $\mathbf{Def}_{X_0'}$, then the induced morphism $\widetilde{T} \to T$ is a Galois cover of smooth germs with Galois group the Weyl group of the corresponding root system, in this case $A_{n-1}$. The inverse image in $\widetilde{T}$ of the discriminant locus in $T$ consists of $\delta$ hyperplanes, corresponding to keeping one of the $\delta$ positive roots the class of an irreducible effective  curve.

This is in agreement with Proposition~\ref{deltaconj} because in this case $\Dis \delta =\tbinom{n}{2}$ (one can deform the union of $n$ concurrent lines to a union of $n$ lines meeting transversally).  

Specializing to the case $n=5$, and hence $r=4$, we have $\dim H^0(X;T^1_X) = 16$. We can deform the singularity $x^2+y^2 + z^5-w^5$ in the weight $1$ direction to $X_t$  which is defined by
$$x^2+y^2 + z^5-w^5 + tz^3w^3.$$
A calculation shows that $\dim H^0(X_t; T^1_{X_t}) = 15$ if $t\neq 0$. In particular, $a\neq 0$ in this case (in fact $a=1$), and the spectral sequence with $E_1$ page $H^q_C(X'_t;\Omega^p_{X'_t}) \Rightarrow H^{p+q}_C(X'_t)$ does not degenerate at $E_2$ for $t\neq 0$. 
\end{example}

\section{A noncrepant example}\label{Section5}
In this final section, we consider a noncrepant example $\hX$, the blowup of a small resolution with exceptional set a smooth curve $C$ along the curve $C$. Following the discussion of the introduction, there are homomorphisms $H^1(\hX; T_{\hX}) \to H^1(U, T_U) \cong H^0(X;T^1_X)$ and $H^1(\hX; \Omega^2_{\hX}) \to  H^1(U, T_U) \cong H^0(X;T^1_X)$. The image of $H^1(\hX; \Omega^2_{\hX})$ is a birational invariant, \textit{i.e.} is independent of the choice of a good resolution, and is identified with the image of $H^1(X';T_{X'})$ by Proposition~\ref{smallvbig}\eqref{smallvbig-1}. On the other hand,  the image  of $H^1(\hX; T_{\hX})$ also has  geometric meaning (it is the tangent space to the ``simultaneous resolution locus'' for the resolution $\pi\colon \hX \to X$), and the map $H^1(\hX; T_{\hX}) \to  H^0(X;T^1_X)$ factors through the natural map $H^1(\hX; T_{\hX}) \to  H^1(\hX; \Omega^2_{\hX})$.  Our goal in this section is to   explicitly compare the image  of $H^1(\hX; T_{\hX})$ in $H^0(X;T^1_X)$ with that of $H^1(\hX; \Omega^2_{\hX})\cong H^1(X';T_{X'})$. More generally we compare $\mathbf{Def}_{\hX}$ and $\mathbf{Def}_{X'}$. While this example is somewhat special, similar techniques will handle other examples, such as the natural good resolution of the $A_2$ singularity defined by $x^2+y^2 + z^2 + w^3$.

We begin with a general result.   

\begin{lemma}\label{firstLeray} Let $g\colon \hX \to X'$ be the blowup of a smooth threefold $X'$ along a smooth compact curve $C$, with exceptional divisor $E$. Then $R^1g_*T_{\hX}  =R^1g_*T_{\hX}(-E)  = 0$, and there is an exact sequence
 $$0\lra R^0g_*T_{\hX} \lra T_{X'} \lra N_{C/X'} \lra 0.$$
 \end{lemma}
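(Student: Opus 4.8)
The plan is to run everything off the tangent sequence of the blowup. Since $g\colon \hX\to X'$ is the blowup of the smooth threefold $X'$ along the smooth compact curve $C$, the exceptional divisor is the $\Pee^1$-bundle $\nu\colon E=\Pee(N_{C/X'})\to C$ with $\nu=g|_E$, and there is a short exact sequence on $\hX$
$$0\to T_{\hX}\xrightarrow{\,dg\,} g^*T_{X'}\to i_*\mathcal{L}\to 0,$$
where $i\colon E\hookrightarrow \hX$ and $\mathcal{L}$ is a line bundle on $E$ restricting to $\scrO_{\Pee^1}(1)$ on each fibre of $\nu$; concretely $\mathcal{L}$ is the quotient of $\nu^*(T_{X'}|_C)$ obtained from the natural surjection $\nu^*(T_{X'}|_C)\to \nu^*N_{C/X'}$ followed by the tautological quotient $\nu^*N_{C/X'}\to \mathcal{L}$, so that $\nu_*\mathcal{L}\cong N_{C/X'}$ and $R^q\nu_*\mathcal{L}=0$ for $q>0$. (This is standard, and in any case follows from a short computation in the usual charts of a blowup.) Since $Rg_*\scrO_{\hX}=\scrO_{X'}$, the projection formula gives $Rg_*(g^*T_{X'})=T_{X'}$ and $Rg_*(i_*\mathcal{L})=j_*\nu_*\mathcal{L}=j_*N_{C/X'}$, both concentrated in degree $0$, where $j\colon C\hookrightarrow X'$.

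Applying $Rg_*$ to the tangent sequence and using $R^1g_*(g^*T_{X'})=0$ yields a four-term exact sequence
$$0\to R^0g_*T_{\hX}\to T_{X'}\xrightarrow{\ \phi\ } N_{C/X'}\to R^1g_*T_{\hX}\to 0.$$
I would identify $\phi$ by taking $R^0\nu_*$ of the maps defining $\mathcal{L}$: it becomes the canonical composite $T_{X'}\to T_{X'}|_C\to N_{C/X'}$, which is surjective because $C$ is a smooth subvariety of $X'$. Hence $R^1g_*T_{\hX}=0$ and the sequence collapses to the asserted $0\to R^0g_*T_{\hX}\to T_{X'}\to N_{C/X'}\to 0$. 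In particular $R^0g_*T_{\hX}$ is identified with the sheaf $T_{X'}\langle C\rangle$ of vector fields on $X'$ tangent to $C$, i.e.\ the kernel of $T_{X'}\to N_{C/X'}$.

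For the remaining vanishing $R^1g_*T_{\hX}(-E)=0$ I would use the restriction sequence $0\to T_{\hX}(-E)\to T_{\hX}\to i_*(T_{\hX}|_E)\to 0$. Applying $Rg_*$ and using the just-proved $R^1g_*T_{\hX}=0$ gives
$$R^0g_*T_{\hX}\to j_*R^0\nu_*(T_{\hX}|_E)\to R^1g_*T_{\hX}(-E)\to 0,$$
so it suffices to show $R^0g_*T_{\hX}\to j_*R^0\nu_*(T_{\hX}|_E)$ is surjective. Since $N_{E/\hX}$ restricts to $\scrO_{\Pee^1}(-1)$ on fibres we have $\nu_*N_{E/\hX}=R^1\nu_*N_{E/\hX}=0$, so $0\to T_E\to T_{\hX}|_E\to N_{E/\hX}\to 0$ gives $R^0\nu_*(T_{\hX}|_E)=\nu_*T_E$; and from $0\to T_{E/C}\to T_E\to \nu^*T_C\to 0$, together with $R^1\nu_*T_{E/C}=0$, one gets an extension $0\to \nu_*T_{E/C}\to \nu_*T_E\to T_C\to 0$, where $\nu_*T_{E/C}$ has fibre $H^0(\Pee(N_x),T_{\Pee(N_x)})=\mathfrak{pgl}(N_x)$ over $x\in C$. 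Using $R^0g_*T_{\hX}=T_{X'}\langle C\rangle$, the composite $T_{X'}\langle C\rangle\to \nu_*T_E\to T_C$ is restriction of vector fields to $C$, which is surjective, and the kernel $I_C\cdot T_{X'}$ maps to $\nu_*T_{E/C}$ by the "linearization along $C$" map: in normal coordinates $(x,y,z)$ for $C$ it sends $v=\sum a_i\partial_i$ with $a_i\in I_C=(x,y)$ to the class modulo scalars of the matrix of linear parts of $a_x,a_y$ in $x,y$, which is visibly surjective onto $\mathfrak{pgl}(N_{C/X'})$. A short diagram chase then gives surjectivity of $T_{X'}\langle C\rangle\to \nu_*T_E$, hence $R^1g_*T_{\hX}(-E)=0$.

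Everything except the last step is formal: the tangent sequence of the blowup, the projection-formula computations, the collapse to the three-term sequence, and $R^1g_*T_{\hX}=0$. The one place requiring genuine hands-on work is the identification of the map $R^0g_*T_{\hX}\to \nu_*T_E$ (equivalently, pinning down $(\star)$ and its restriction to $E$ with the correct $\Pee$-bundle conventions and checking that the induced map on $R^0g_*$ is the linearization map above). That is the main obstacle, but it is a routine local computation in the standard charts of the blowup.
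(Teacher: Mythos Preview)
Your argument is correct. Both you and the paper use the same tangent sequence $0\to T_{\hX}\to g^*T_{X'}\to i_*T_{E/C}(E)\to 0$ (your $\mathcal{L}$ is exactly $T_{E/C}(E)$, as follows from the Euler sequence twisted by $\scrO_E(E)=\scrO_E(-1)$), but the order of the logic differs. The paper first proves $R^1g_*T_{\hX}=0$ and $R^1g_*T_{\hX}(-E)=0$ by the formal functions theorem: on a fibre $f\cong\Pee^1$ one has $N_{f/\hX}\cong\scrO_f\oplus\scrO_f(-1)$, $T_f\cong\scrO_f(2)$, $\Sym^n(I_f/I_f^2)=\bigoplus_{k=0}^n\scrO_f(k)$, and $\scrO_{\hX}(-E)|f\cong\scrO_f(1)$, so $H^1(nf;T_{\hX}|nf)=H^1(nf;T_{\hX}(-E)|nf)=0$ for all $n\geq 1$. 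Only then does the paper push forward the tangent sequence and identify $r_*T_{E/C}(E)\cong N_{C/X'}$ via the Euler sequence to obtain the three-term exact sequence.

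You instead obtain the vanishing of $R^1g_*T_{\hX}$ as a \emph{consequence} of identifying the connecting map $T_{X'}\to N_{C/X'}$ with the canonical surjection; this is slightly more conceptual and yields the pleasant byproduct $R^0g_*T_{\hX}\cong T_{X'}\langle C\rangle$. For $R^1g_*T_{\hX}(-E)$, however, your restriction-sequence route---computing $\nu_*(T_{\hX}|_E)\cong\nu_*T_E$, the extension $0\to\nu_*T_{E/C}\to\nu_*T_E\to T_C\to 0$, and the linearization map $I_C\cdot T_{X'}\to \mathfrak{pgl}(N_{C/X'})$---is substantially more work than the paper's one-line formal-functions argument (just twist the fibrewise calculation by $\scrO_f(1)$). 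Your approach buys a more explicit structural picture; the paper's buys brevity.
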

 \begin{proof}  For the first statement, we must show that $(R^1g_*T_{\hX})_t=0$ for all $t\in X'$, and we may as well assume that $t\in C$. By the formal functions theorem, it suffices  to show that
 $$\varprojlim_nH^1\left(nf; T_{\hX}|nf\right) =0,$$
 where $f$ is a fiber of $g\colon \hX \to X'$ over a point $t\in C$, $I_f$ is the ideal sheaf defining the reduced scheme $f$, and $nf$ is the scheme defined by $I_f^n$.  From the exact sequence
 $$0 \lra N_{f/E} \lra N_{f/\hX} \lra N_{E/\hX}|f \lra 0$$
 and the fact that $N_{f/E} \cong \scrO_f$ and $N_{E/\hX}|f \cong \scrO_f(-1)$, we see that 
 $$N_{f/\hX} \cong \scrO_f \oplus \scrO_f(-1).$$
 Because $f$ is a local complete intersection, there is an  exact sequence
 $$0 \lra \Sym^n\left(I_f/I_f^2\right) \lra \scrO_{(n+1)f} \lra \scrO_{nf} \lra 0,$$
 where $I_f/I_f^2$ is the conormal bundle. Hence
 $$\Sym^n\left(I_f/I_f^2\right) = \scrO_f \oplus \scrO_f(1) \oplus \cdots \oplus \scrO_f(n).$$
 Then from the normal bundle sequence
 $$0 \lra T_f \lra T_{\hX}|f \lra N_{f/\hX} \lra 0$$
 and the fact that $T_f \cong \scrO_f(2)$,
 it follows easily that $H^1(nf; T_{\hX}|nf) =0$ for all $n\geq 1$, hence that $R^1g_*T_{\hX} =0$. The proof that 
 $R^1g_*T_{\hX}(-E) =0$ is similar, using $\scrO_{\hX}(-E)|f =\scrO_f(1)$.
 
 To see the statement about $R^0g_*T_{\hX}$, there is an exact sequence
 $$0 \lra T_{\hX} \lra g^*T_{X'} \lra i_*T_{E/C}(E) \lra 0,$$
 where $i\colon E \to \hX$ is the inclusion. Hence there is an exact sequence
\begin{equation*}
0 \lra g_*T_{\hX} \lra g_*g^*T_{X'} \lra g_*i_*T_{E/C}(E) \lra R^1g_*T_{\hX} =0. \tag{$*$}
\end{equation*}
 Then $g_*g^*T_{X'} = T_{X'}$ and $g_*i_*T_{E/C}(E) = r_*T_{E/C}(E)$, where $r\colon E\to C$ is the projection. Also, we have the Euler exact sequence
 $$0 \lra \scrO_E \lra r^*N_{C/X'}(1) \lra  T_{E/C} \lra 0.$$
 Thus, using $\scrO_E(E) = \scrO_E(-1)$, we get
 $$0 \lra \scrO_E(E)  \lra r^*N_{C/X'} \lra  T_{E/C}(E) \lra 0.$$
 Taking $r_*$ and using $R^1r_*\scrO_E(E) =0$  gives $r_*T_{E/C}(E)  = N_{C/X'}$. Thus ($*$) becomes
 $$0 \lra g_*T_{\hX} \lra T_{X'} \lra N_{C/X'} \lra 0,$$
 as claimed.
  \end{proof}
  
\begin{corollary}\label{firstLeraycor} Suppose as above that $g\colon \hX \to X'$ is the blowup of a smooth threefold along a smooth compact curve $C$ and that moreover $p\colon X'\to X$ is a  resolution of an isolated singular point $x$, with $C\subseteq p^{-1}(x)$. Suppose in addition either that $p\colon X'\to X$ is a small resolution of $X$ or that it is a good resolution with $\deg N_{C/E_i}< 0$ for every component $E_i$ of\, $E=p^{-1}(x)$ containing $C$. Then there is an exact sequence 
$$0\lra   H^0\left(C; N_{C/X'}\right) \lra H^1\left(\hX; T_{\hX}\right) \lra H^1\left(X'; T_{X'}\right) \lra H^1\left(C; N_{C/X'}\right).$$
Hence, if  $p\colon X'\to X$ is an equivariant resolution of\, $X$, for example if $p$ is a small resolution, then so is $\pi = p\circ g\colon \hX \to X$. 
\end{corollary}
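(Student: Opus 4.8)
The plan is to feed Lemma~\ref{firstLeray} into the Leray spectral sequence for $g$ and then show that the resulting edge homomorphism out of $H^0(X';T_{X'})$ vanishes. Since the fibers of $g$ have dimension at most one and $R^1g_*T_{\hX}=0$ by Lemma~\ref{firstLeray}, the Leray spectral sequence for $g$ and $T_{\hX}$ collapses, so $H^i(\hX;T_{\hX})\cong H^i(X';g_*T_{\hX})$ for all $i$. Applying $H^\bullet(X';-)$ to the short exact sequence $0\to g_*T_{\hX}\to T_{X'}\to \iota_*N_{C/X'}\to 0$ of Lemma~\ref{firstLeray}, where $\iota\colon C\hookrightarrow X'$ is the inclusion and $H^i(X';\iota_*N_{C/X'})=H^i(C;N_{C/X'})$, produces the long exact sequence
$$0\to H^0(\hX;T_{\hX})\to H^0(X';T_{X'})\xrightarrow{\ \alpha\ } H^0(C;N_{C/X'})\to H^1(\hX;T_{\hX})\to H^1(X';T_{X'})\to H^1(C;N_{C/X'}).$$
Thus the asserted four-term sequence is equivalent to the vanishing of $\alpha$, i.e.\ to the statement that every global vector field on $X'$ is tangent to $C$ along $C$.

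Proving $\alpha=0$ is the crux of the argument, and the only place the hypotheses on $p$ are used; I would argue via flows. The natural injection $R^0p_*T_{X'}\hookrightarrow T^0_X$ lets one regard a section $\xi\in H^0(X';T_{X'})$ as a derivation $\bar\xi$ of $\scrO_X$ to which $\xi$ is $p$-related. Its local holomorphic flow $\phi_t$ consists of automorphisms of the germ $(X,x)$, hence preserves $X_{\mathrm{sing}}=\{x\}$, so $\phi_t(x)=x$; the flow $\psi_t$ of $\xi$ on $X'$, which covers $\phi_t$ under $p$, therefore preserves $p^{-1}(x)$. In the small-resolution case $p^{-1}(x)=C$, so $\xi$ is tangent to $C$ and its class $\alpha(\xi)\in N_{C/X'}=(T_{X'}|_C)/T_C$ vanishes. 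In the good-resolution case $\psi_t$ preserves $E=p^{-1}(x)=\bigcup_iE_i$, so $\xi$ is tangent to each $E_i$; hence for every component $E_i\supseteq C$ the restriction $\xi|_C$ is a section of $T_{E_i}|_C$, and its image in $N_{C/E_i}$ lies in $H^0(C;N_{C/E_i})$, which is $0$ because $\deg N_{C/E_i}<0$. Consequently $\xi|_C\in H^0(C;T_C)$, so again $\alpha(\xi)=0$, which establishes the exact sequence. This tangency claim is the step I expect to be the main obstacle; the rest is formal diagram chasing.

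For the final assertion, assume $p$ is equivariant, so $R^0p_*T_{X'}\cong T^0_X$. Since $R^0\pi_*=R^0p_*\circ R^0g_*$, we have $R^0\pi_*T_{\hX}=R^0p_*(g_*T_{\hX})$, and applying the left-exact functor $R^0p_*$ to $0\to g_*T_{\hX}\to T_{X'}\to \iota_*N_{C/X'}\to 0$ identifies $R^0\pi_*T_{\hX}$ with the kernel of $T^0_X=R^0p_*T_{X'}\to R^0p_*\iota_*N_{C/X'}$. The target sheaf is supported at $x$ with stalk $H^0(C;N_{C/X'})$, and on that stalk the map is exactly $\alpha$, which we have shown vanishes. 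Hence $R^0\pi_*T_{\hX}\cong T^0_X$, i.e.\ $\pi=p\circ g$ is equivariant; in particular this holds whenever $p$ is a small resolution, since a small resolution is automatically equivariant.
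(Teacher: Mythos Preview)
Your proof is correct and follows essentially the same approach as the paper: both use Lemma~\ref{firstLeray} together with the Leray spectral sequence to reduce to showing that $H^0(X';T_{X'})\to H^0(C;N_{C/X'})$ vanishes, and both establish this via the flow of a vector field preserving $p^{-1}(x)$. The only cosmetic difference is that the paper phrases the key step contrapositively (a nonzero normal section would move $C$ in a family inside the exceptional set, which is impossible), whereas you argue directly that $\xi$ is tangent to $C$; in the good-resolution case your use of $H^0(C;N_{C/E_i})=0$ is exactly the infinitesimal version of the paper's ``$C$ does not move inside $E_i$.''
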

\begin{proof} By the Leray spectral sequence and Lemma~\ref{firstLeray}, $H^i(\hX; T_{\hX}) = H^i(X';R^0g_*T_{\hX})$ for all $i$. The exact sequence of the statement then follows from the exact sequence for $R^0g_*T_{\hX}$ in Lemma~\ref{firstLeray},  except for the injectivity on the left. If  $H^0(C; N_{C/X'}) \to H^1(\hX; T_{\hX})$ is not injective, then  the map $H^0(X'; T_{X'}) \to H^0(C; N_{C/X'})$ is nonzero. Thus there exists a nonzero element of $H^0(C; N_{C/X'})$ which lifts to $\theta \in H^0(X'; T_{X'})$. Exponentiating the vector field $\theta$, we see that $C$ moves in a one-parameter family in $X'$. However, given the contraction $p\colon X'\to X$, every such family must be contained in the exceptional set of $p$. This is clearly impossible if $p$ is a small resolution or if $C$ does not move in a family in some component $E_i$ of the exceptional set $E$. In particular, if $\deg N_{C/E_i}< 0$ for every component $E_i$ of $E$ containing $C$, then $C$ does not move inside any $E_i$. 

To see the last statement, the above shows that the map $H^0(\hX; T_{\hX}) \to H^0(X'; T_{X'})$ is surjective. Equivalently, $R^0\pi_*T_{\hX} \to R^0p_*T_{X'}$ is surjective, and it is clearly injective, hence an isomorphism. Since by assumption $R^0p_*T_{X'} = T^0_X$, this says that $R^0\pi_*T_{\hX} =T^0_X$, \textit{i.e.} $\pi$ is equivariant.  
\end{proof}

For the rest of this section, $(X,x)$ is a threefold $A_{2n-1}$ singularity, $p\colon X'\to X$ is a small resolution with exceptional curve $C$, and $g\colon \hX \to X'$ is the blowup of the curve $C$, with exceptional divisor~$E$. Then $\pi = p\circ g\colon \hX \to X$ is a noncrepant resolution of $X$, and it is equivariant by Corollary~\ref{firstLeraycor}. If $n =1$, then $H^i(\hX;T_{\hX}) = H^i(X';T_{X'}) = 0$ for $i= 1,2$, and both $\mathbf{Def}_{\hX}$ and $\mathbf{Def}_{X'}$ are (represented by) a single point. Thus, we shall always assume that $n\geq 2$, so that $N_{C/X'} \cong \scrO_C \oplus \scrO_C(-2)$ and $E\cong \mathbb{F}_2$. Note that $K_{\hX} = \scrO_{\hX}(E)$ and $K_E = K_{\hX}\otimes \scrO_{\hX}(E)|E = \scrO_{\hX}(2E)|E$. As $K_E = \scrO_E(-2\sigma -4f)$, where $\sigma$ is the negative section on $E$ and $f$ is the class of a fiber, $\scrO_{\hX}(E)|E = \scrO_E(-\sigma -2f)$. In particular, $N_{E/\hX}\spcheck =\scrO_{\hX}(-E)|E$ is effective, nef, and big, and $H^i(E; N_{E/\hX})=0$ for all $i$ since 
$$H^2\left(E; N_{E/\hX}\right)\spcheck =H^0\left(E; K_E\otimes \scrO_{\hX}(-E)|E\right) = H^0(E;\scrO_E(-\sigma -2f)) =0.$$  

  \begin{corollary}\label{Cor5.3}  Under the above assumptions, the following hold: 
  \begin{enumerate}
  \item\label{Cor5.3-1} We have $H^2(\hX; T_{\hX}) =  H^2(\hX; T_{\hX}(-E)) = 0$. In particular, $\mathbf{Def}_{\hX}$ is unobstructed of dimension $\dim H^1(\hX; T_{\hX})$.
  \item\label{Cor5.3-2} We have $H^1(\hX; T_{\hX}(-\log E)) \cong H^1(\hX; T_{\hX})$,  and the natural map
 $$H^1(\hX; T_{\hX}(-\log E)) \lra H^1(E;T_E)$$
 is surjective. Hence $E$ is a stable submanifold of\, $\hX$, deformations of\, $\hX$ are versal for deformations of $E$, and there exist small deformations of\, $\hX$ for which $E$ deforms to $\mathbb{F}_0$.
 \item\label{Cor5.3-3}  For all $i$, $H^i(\hX; T_{\hX}) = H^i(X';R^0g_*T_{\hX})$, and there is an exact sequence
  $$0\lra \Cee = H^0\left(C; N_{C/X'}\right) \lra H^1\left(\hX; T_{\hX}\right) \lra H^1\left(X'; T_{X'}\right) \lra H^1\left(C; N_{C/X'}\right) \lra 0.$$
  Thus $\dim   H^1(\hX; T_{\hX}) = \dim H^1(X'; T_{X'}) = n-1$.
  \end{enumerate}
  \end{corollary}
  \begin{proof} \eqref{Cor5.3-1}~  To see that $H^2(\hX; T_{\hX}) = 0$, it suffices to show that $R^2\pi_*T_{\hX} =0$. In the Leray spectral sequence with $E_2^{a,b} = R^ap_*R^bg_*T_{\hX}\Rightarrow R^{a+b}\pi_*T_{\hX}$, all possible terms contributing to $R^2\pi_*T_{\hX}$ are $0$, either for dimension reasons or because $R^1p_*R^1g_*T_{\hX} =0$ by Lemma~\ref{firstLeray}. Thus $R^2\pi_*T_{\hX} =0$. The proof for  $H^2(\hX; T_{\hX}(-E))$ is similar.

\smallskip
\eqref{Cor5.3-2}~ From the exact sequence
  $$0 \lra T_{\hX}(-\log E) \lra T_{\hX} \lra N_{E/\hX} \lra 0$$
   and the fact that $H^i(E; N_{E/\hX})=0$ for all $i$ (apply Leray to the morphism $r\colon E \to C$), we have  $H^i(\hX; T_{\hX}(-\log E)) \cong H^i(\hX; T_{\hX})$ for all $i$. Thus in particular $H^2(\hX; T_{\hX}(-\log E)) = 0$ and $H^1(\hX; T_{\hX}(-\log E)) \cong H^1(\hX; T_{\hX})$. Finally, from the exact sequence
   $$0 \lra T_{\hX}(-E) \lra T_{\hX}(-\log E) \lra T_E \lra 0$$
   and the vanishing of $H^2(\hX; T_{\hX}(-E))$, we see that $H^1(\hX; T_{\hX}(-\log E)) \to H^1(E;T_E)$
 is surjective.

\smallskip
\eqref{Cor5.3-3}~ This follows from Corollary~\ref{firstLeraycor} and the fact that $H^2(\hX; T_{\hX}) =0$.
  \end{proof}
  
  \begin{remark}\leavevmode
    \begin{enumerate}[wide]
\item  By \eqref{Cor5.3-3} above, the images of $H^1(\hX; T_{\hX})$ and $H^1(\hX; \Omega^2_{\hX})$ in $H^0(X;T^1_X)$ are different since by Proposition~\ref{smallvbig}, the image of $H^1(\hX; \Omega^2_{\hX})$ is that of $H^1(X'; \Omega^2_{X'}) = H^1(X'; T_{X'})$, and this image is strictly larger than that of $H^1(\hX; T_{\hX})$. 
 
\item  The functors  $\mathbf{Def}_{\hX}$ and $\mathbf{Def}_{X'}$ are both smooth of dimension $n-1$, but the differential of the corresponding morphism of functors $\mathbf{Def}_{\hX} \to \mathbf{Def}_{X'}$, \textit{i.e.} the  induced map on Zariski tangent spaces, is not an isomorphism at $0$: It has a $1$-dimensional kernel and cokernel. We will  describe the morphism  $\mathbf{Def}_{\hX} \to \mathbf{Def}_{X'}$ explicitly.
  \end{enumerate}
 \end{remark} 
  
  First, let $\mathbf{Def}_{\hX, E}$ denote the functor of deformations of the pair $(X,E)$: For a germ $(S,s_0)$, an element of $\mathbf{Def}_{\hX, E}(S,s_0)$ consists of a deformation $\widehat{\mathcal{X}}$ of $\hX$ over $S$, together with an effective Cartier divisor $\mathcal{E}$ of $\widehat{\mathcal{X}}$, flat over $S$ and restricting to $E$ over $s_0$. The functor $\mathbf{Def}_{X',C}$ is defined similarly; the objects over $S$ are pairs $(\mathcal{X}', \mathcal{C})$, where $\mathcal{C}$ is flat over $S$ and restricts to the reduced subscheme $C$ of $X'$. In particular, as we are only considering germs of spaces, $\mathcal{C}$ is smooth over $S$ with all fibers irreducible.
  
  \begin{proposition} We have  $\mathbf{Def}_{\hX} \cong \mathbf{Def}_{\hX, E} \cong \mathbf{Def}_{X',C}$, and the morphism $\mathbf{Def}_{\hX} \to \mathbf{Def}_{X'}$ is the same under the above identification as the forgetful morphism $\mathbf{Def}_{X',C} \to \mathbf{Def}_{X'}$.
  \end{proposition}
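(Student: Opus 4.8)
The plan is to establish the two displayed isomorphisms one at a time and then to chase the forgetful maps. For $\mathbf{Def}_{\hX}\cong\mathbf{Def}_{\hX,E}$ I would show that the forgetful morphism $\mathbf{Def}_{\hX,E}\to\mathbf{Def}_{\hX}$ is an isomorphism of functors, i.e.\ that in \emph{every} deformation $\widehat{\mathcal X}/S$ of $\hX$ the divisor $E$ extends to a unique flat effective Cartier divisor. This is an induction on infinitesimal neighborhoods: for a small extension $S'\to S$ with square-zero kernel $I$ and a lift $\widehat{\mathcal X}'/S'$ of $\widehat{\mathcal X}/S$, the obstruction to extending a flat divisor $\mathcal E\subseteq\widehat{\mathcal X}$ restricting to $E$ lies in $H^1(E;N_{E/\hX})\otimes I$, and when it vanishes the set of extensions is a torsor under $H^0(E;N_{E/\hX})\otimes I$ (standard deformation theory of a Cartier divisor, or of the Hilbert functor; here $H^{\geq 1}(\hX;\scrO_{\hX})=0$). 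Both groups vanish, since $H^i(E;N_{E/\hX})=0$ for all $i$ by the computation recorded just before Corollary~\ref{Cor5.3} (apply Leray to $r\colon E\to C$). Hence the extension exists and is unique, so $\mathbf{Def}_{\hX,E}\to\mathbf{Def}_{\hX}$ is an isomorphism.

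For $\mathbf{Def}_{\hX,E}\cong\mathbf{Def}_{X',C}$ I would exhibit blow up and blow down as mutually inverse morphisms of functors. In one direction, $\Phi\colon\mathbf{Def}_{X',C}\to\mathbf{Def}_{\hX,E}$ sends a deformation $(\mathcal X',\mathcal C)/S$ to $(\operatorname{Bl}_{\mathcal C}\mathcal X',\mathcal E)$, where $\mathcal E=\mathbb{P}(N_{\mathcal C/\mathcal X'})$ is the exceptional divisor; since $\mathcal C\subseteq\mathcal X'$ is a regular embedding of schemes smooth over $S$ extending $C\subseteq X'$, the blow up is smooth over $S$, commutes with base change, and restricts to $\hX\supseteq E$ over the closed point, so $\Phi$ is well defined and functorial. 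In the other direction, given $(\widehat{\mathcal X},\mathcal E)/S$, the ruling $r\colon E\to C$ --- a $\mathbb{P}^1$-bundle with $R^1r_*T_{E/C}=0$ --- extends uniquely to a $\mathbb{P}^1$-bundle $\mathcal E\to\mathcal C$ with $\mathcal C$ smooth over $S$, and each fiber has normal bundle $\scrO\oplus\scrO(-1)$ in $\widehat{\mathcal X}$ (it is an extension of the degree $(-1)$ line bundle $N_{\mathcal E/\widehat{\mathcal X}}|_f$ by the trivial bundle $N_{f/\mathcal E}\cong\scrO_f$, hence splits, exactly as for $f\subset\hX$ in Lemma~\ref{firstLeray}). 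By the relative version of the classical blow-down criterion, $\mathcal E$ can therefore be contracted over $S$ to a smooth $\mathcal X'/S$ containing $\mathcal C$ as a smooth subvariety, with $\widehat{\mathcal X}=\operatorname{Bl}_{\mathcal C}\mathcal X'$ and $\mathcal E$ its exceptional divisor; this defines $\Psi\colon\mathbf{Def}_{\hX,E}\to\mathbf{Def}_{X',C}$, restricting over the closed point to $g\colon\hX\to X'$ and to $C$. By construction $\Phi$ and $\Psi$ are mutually inverse.

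Finally, under these identifications $\widehat{\mathcal X}\mapsto(\widehat{\mathcal X},\mathcal E)\mapsto(\mathcal X',\mathcal C)$, and the morphism $\mathbf{Def}_{\hX}\to\mathbf{Def}_{X'}$ --- which is the blow-down morphism $\widehat{\mathcal X}\mapsto\mathcal X'$, its differential being the map $H^1(\hX;T_{\hX})=H^1(X';R^0g_*T_{\hX})\to H^1(X';T_{X'})$ of Corollary~\ref{Cor5.3}(iii) --- becomes $(\mathcal X',\mathcal C)\mapsto\mathcal X'$, i.e.\ the forgetful morphism $\mathbf{Def}_{X',C}\to\mathbf{Def}_{X'}$; this is the last assertion, and it is essentially definitional once Steps 1 and 2 are in place. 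The genuinely delicate point is the existence of the relative contraction used to define $\Psi$: one must check that $\mathcal E$ contracts over $S$ to a scheme $\mathcal X'$ still smooth over $S$ with $\mathcal C$ flat over $S$, and that this is functorial in $S$. This can be handled either by invoking a relative contraction theorem, or --- using that $\mathbf{Def}_{\hX}$ is unobstructed --- by contracting the unique relative exceptional divisor in the miniversal family $\widehat{\mathcal X}\to B$ over its smooth base $B$ fiberwise, checking flatness of $\mathcal X'$ and $\mathcal C$ over $B$, and verifying that $(\mathcal X',\mathcal C)/B$ is versal for $\mathbf{Def}_{X',C}$; a third, more computational option is to carry everything out explicitly in the local model of the $A_{2n-1}$ singularity.
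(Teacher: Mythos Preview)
Your proof is correct and follows essentially the same route as the paper. The only cosmetic difference is in the first isomorphism: the paper argues that both $\mathbf{Def}_{\hX,E}$ and $\mathbf{Def}_{\hX}$ are smooth and that the forgetful map induces the isomorphism $H^1(\hX;T_{\hX}(-\log E))\cong H^1(\hX;T_{\hX})$ on tangent spaces (Corollary~\ref{Cor5.3}(ii)), whereas you argue directly via Hilbert-scheme obstruction theory that the divisor extends uniquely; both arguments rest on the same vanishing $H^i(E;N_{E/\hX})=0$. For the second isomorphism the paper gives exactly your blow-up/blow-down construction, and your remarks on the relative contraction flesh out what the paper leaves as ``it is easy to check.''
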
 
  \begin{proof} It is a standard result that the tangent space to $\mathbf{Def}_{\hX, E}$ is $H^1(\hX; T_{\hX}(-\log E))$  (\textit{cf.} Section~1) and the obstruction space is $H^2(\hX; T_{\hX}(-\log E))=0$. Thus, $\mathbf{Def}_{\hX, E}$ is smooth by Corollary~\ref{Cor5.3}(ii), and the first statement of the proposition is the isomorphism $H^1(\hX; T_{\hX}(-\log E)) \cong H^1(\hX; T_{\hX})$. For the second, given a pair $(\widehat{\mathcal{X}}, \mathcal{E})$ in  $\mathbf{Def}_{\hX, E}(S,s_0)$, it is easy to check that the morphism $r\colon E \to C$ extends to a morphism $\mathcal{E} \to \mathcal{C} \cong C\times S$ (note that $C\cong \Pee^1$ is rigid) and that $\mathcal{E}$ can be blown down to a subspace $\mathcal{C} \cong C\times S \subseteq \mathcal{X}'$. Conversely, given a pair $(\mathcal{X}', \mathcal{C})$ over $S$, let $\widehat{\mathcal{X}}$ be the blowup of $\mathcal{X}'$ along  $\mathcal{C}$, and let $ \mathcal{E}$ be the exceptional divisor. This gives  two morphisms of functors $\mathbf{Def}_{\hX, E} \to \mathbf{Def}_{X',C}$ and $\mathbf{Def}_{X',C}  \to \mathbf{Def}_{\hX, E}$ which are clearly inverses. Hence $\mathbf{Def}_{\hX, E} \cong \mathbf{Def}_{X',C}$. 
  \end{proof} 
  
  To put the above in more manageable form, we give an explicit description of $\mathbf{Def}_{X',C}$. First, we recall the basics about deformations of $X$ and $X'$. Let $Z$ be the germ of the standard ordinary double point in dimension $2$, given by $x^2+y^2+z^2=0$, and let $Z'$ be the resolution of singularities of $Z$, with $C\subseteq Z'$ the exceptional set. Then $\mathbf{Def}_Z$ is represented by the germ $(\Cee, 0)$, with coordinate $t$ and universal family $\mathcal{Z}\to (\Cee, 0)$ given by $x^2+ y^2+z^2+t =0$. Likewise, $\mathbf{Def}_{Z'}$ is represented by the germ $(\Cee, 0)$, with coordinate $u$  and universal family $\mathcal{Z}'\to (\Cee, 0)$ given by a choice for a small resolution of $x^2+ y^2+z^2+u^2 =0$. The threefold $X'$ is isomorphic in a neighborhood of $C$ to $f^*\mathcal{Z}'$, where $f\colon (\Cee, 0) \to (\Cee, 0)$ is given by $u = f(w) = w^n$. A deformation $\mathcal{X}'$ of $X'$ over a germ $(S,s_0)$ corresponds to a  morphism $F\colon (\Cee, 0) \times S \to (\Cee, 0)$ with $F(w, s_0) = f(w) = w^n$, with $\mathcal{X}' = F^*\mathcal{Z}'$. A polynomial $F$ restricting to $w^n$ is analytically equivalent to one of the form $w^n + b_{n-2}(s)w^{n-1} + \cdots + b_0(s)$.  In particular, $\mathbf{Def}_{X'}$ is represented by the germ of the affine space $(\Cee^{n-1},0)$, with coordinates $(b_{n-2}, \dots , b_0)$.  Also note that $F^{-1}(0) \to S$ is a finite cover of degree $n$.
  
  \begin{lemma}  There is an isomorphism of functors from $\mathbf{Def}_{X',C}$ to $\mathbf{F}$, where, for $(S,s_0)$ the germ of an analytic space, $\mathbf{F}(S)$ is the set of  pairs $(F, \sigma)$, where as above $F\colon (\Cee, 0) \times S \to (\Cee, 0)$ is a morphism such that  $F(w, s_0) = f(w) = w^n$ and $\sigma \subseteq F^{-1}(0)$ is a section of the finite cover $F^{-1}(0) \to S$, or equivalently a morphism $\lambda \colon S \to (\Cee,0)$ such that $F(\lambda(s), s)$ is identically  $0$. Moreover, via this isomorphism, the morphism $\mathbf{Def}_{X',C} \to \mathbf{Def}_{X'}$ corresponds the forgetful map $(F,\sigma) \in \mathbf{F}(S) \mapsto F$.   
  \end{lemma}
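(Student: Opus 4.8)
The plan is to exhibit a natural transformation $\Phi\colon\mathbf{Def}_{X',C}\to\mathbf{F}$ and a candidate inverse $\Psi$, and to check they are mutually inverse. Throughout I use the identifications set up above: a deformation $\mathcal{X}'$ of $X'$ over $S$ is $\mathcal{X}'=\mathcal{X}'_F:=\mathcal{Z}'\times_{\Cee_u}(\Cee_w\times S)$ for some $F\colon(\Cee_w,0)\times S\to(\Cee_u,0)$ with $F(w,s_0)=w^n$; write $q\colon\mathcal{X}'_F\to\Cee_w\times S$ and $\operatorname{pr}\colon\mathcal{X}'_F\to\mathcal{Z}'$ for the two projections and $\beta\colon\mathcal{X}'_F\to\mathcal{Z}\times_{\Cee_u}(\Cee_w\times S)$ for the blowdown induced by $\mathcal{Z}'\to\mathcal{Z}$. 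Recall that $\mathcal{Z}'_0=Z'$ is the reduced (smooth) minimal resolution of the $A_1$ surface singularity $\mathcal{Z}_0$, that $Z'=V_{\mathcal{Z}'}(u)$, and that every compact analytic curve in $\mathcal{Z}'$ is supported on its exceptional curve $C\subseteq Z'$.

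\emph{The map $\Phi$.} Given $(\mathcal{X}',\mathcal{C})\in\mathbf{Def}_{X',C}(S)$, the underlying deformation supplies $F$. By the remark preceding the lemma, after shrinking $S$ the family $g\colon\mathcal{C}\to S$ is proper and smooth with every fibre $\cong\Pee^1$, so $g_*\scrO_{\mathcal{C}}=\scrO_S$ and the $S$-morphism $q|_{\mathcal{C}}$ factors through a unique $S$-point $\sigma=(\lambda,\operatorname{id})$, i.e.\ a morphism $\lambda\colon S\to(\Cee_w,0)$ with $\lambda(s_0)=0$. I set $\Phi(\mathcal{X}',\mathcal{C})=(F,\sigma)$. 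The only thing to prove is that $\sigma\subseteq F^{-1}(0)$, i.e.\ $F(\lambda(s),s)\equiv 0$.

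\emph{The crux.} Fix $s\in S$. The image $\operatorname{pr}(\mathcal{C}_s)\subseteq\mathcal{Z}'$ is a compact analytic set; it cannot be a point $Q$, since then $\mathcal{C}_s$ would lie in $\{w\in(\Cee_w,0):F(w,s)=u(Q)\}$, which is $0$-dimensional. Hence $\operatorname{pr}(\mathcal{C}_s)$ is a compact curve, so it is supported on $C$, so it lies in $Z'=V_{\mathcal{Z}'}(u)$. Thus $\operatorname{pr}(\mathcal{C})\subseteq Z'$ set-theoretically, so $F(\lambda(s),s)=\operatorname{pr}^*u|_{\mathcal{C}}$ is nilpotent in $\scrO_S$. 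Upgrading ``nilpotent'' to ``zero'' is the one genuinely delicate step. One route is scheme-theoretic: the image $\beta(\mathcal{C})$ is proper over $\Cee_w\times S$ with $0$-dimensional fibres there, because the fibres of $\mathcal{Z}\times_{\Cee_u}(\Cee_w\times S)\to\Cee_w\times S$ are Stein germs without compact curves; hence $\beta(\mathcal{C})$ is finite over $\Cee_w\times S$ with support over $F^{-1}(0)$, and combining this with the reducedness of $Z'$ and the flatness of $\mathcal{C}/S$ forces $\operatorname{pr}^*u|_{\mathcal{C}}=0$. A cleaner shortcut: both $\mathbf{Def}_{X',C}\cong\mathbf{Def}_{\hX}$ and $\mathbf{F}$ are prorepresentable by smooth germs of dimension $n-1$ (for $\mathbf{F}$ one parametrizes by $b_{n-2},\dots,b_1,\lambda$, with $b_0$ determined by $F(\lambda,\cdot)=0$), so it suffices to define $\Phi$ on Artin rings and to check it is injective on tangent spaces; injectivity is then immediate from $H^0(C,N_{C/X'})\hookrightarrow H^0(C,N_{Z'/X'}|_C)=H^0(\Pee^1,\scrO_{\Pee^1})$ (valid since $H^0(\Pee^1,\scrO_{\Pee^1}(-2))=0$) together with the rigidity of $C$ in $Z'$. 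I expect this upgrade, and to a lesser degree the properness of $\mathcal{C}\to S$ in the germ setting, to be the main obstacles.

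\emph{The inverse and compatibility.} Given $(F,\sigma)\in\mathbf{F}(S)$, put $\mathcal{X}'=\mathcal{X}'_F$; since $\sigma\subseteq F^{-1}(0)$ maps to $u=0$, one has $\mathcal{X}'_F\times_{\Cee_w\times S}\sigma\cong\mathcal{Z}'_0\times S=Z'\times S$, and I set $\mathcal{C}:=C\times\sigma\subseteq Z'\times\sigma\subseteq\mathcal{X}'_F$; this is flat over $S$ with fibres $\cong\Pee^1$, restricting over $s_0$ to $C\subseteq Z'\subseteq X'$ (using $\lambda(s_0)=0$). For $\Psi\circ\Phi=\operatorname{id}$: by the crux $\mathcal{C}\subseteq\mathcal{X}'_F\times_{\Cee_w\times S}\sigma=Z'\times S$, and any flat family of $\Pee^1$'s in $Z'\times S$ restricting to $C$ is the constant family $C\times S$ because $H^0(C,N_{C/Z'})=H^0(\Pee^1,\scrO_{\Pee^1}(-2))=0$; hence $\mathcal{C}=C\times\sigma$. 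For $\Phi\circ\Psi=\operatorname{id}$: $q(C\times\sigma)=\sigma$ since the projection $C\times\sigma\to\sigma$ is surjective. Both constructions are visibly functorial in $S$. Finally $\Phi(\mathcal{X}',\mathcal{C})=(F,\sigma)$ has first entry the image of $\mathcal{X}'$ under $\mathbf{Def}_{X',C}\to\mathbf{Def}_{X'}$, while $\mathbf{F}\to\mathbf{Def}_{X'}$ is $(F,\sigma)\mapsto F$; so the isomorphism identifies $\mathbf{Def}_{X',C}\to\mathbf{Def}_{X'}$ with the forgetful map, as asserted.
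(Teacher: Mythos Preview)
Your overall strategy---construct mutually inverse natural transformations and verify they compose to the identity---matches the paper's, as does your definition of $\Psi$ and the check that $\Phi\circ\Psi=\operatorname{id}$. The difference lies in the ``crux,'' and here your argument has a gap you yourself flag.

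Your set-theoretic argument for $\sigma\subseteq F^{-1}(0)$ is fine, but neither of your two routes to the scheme-theoretic upgrade is complete. The first route (via $\beta(\mathcal{C})$) is too vague: reducedness of $Z'$ and flatness of $\mathcal{C}/S$ alone do not force $\operatorname{pr}^*u|_{\mathcal{C}}=0$ over a nonreduced base. The second route (the prorepresentability shortcut) is circular: to define $\Phi$ on an Artin local $S$ at all you already need $F(\lambda(s),s)=0$ as an element of $\scrO_S$, which is exactly the scheme-theoretic claim you are trying to establish. (You could instead run the shortcut through $\Psi$, which \emph{is} well-defined, and check bijectivity on tangent spaces---but you do not say this.)

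The paper closes the gap in one line by using rigidity of $C$ in the \emph{threefold} $\mathcal{Z}'$ rather than in the surface $Z'$. Since $\mathcal{Z}'$ is a small resolution of an ordinary double point, $N_{C/\mathcal{Z}'}\cong\scrO_{\Pee^1}(-1)\oplus\scrO_{\Pee^1}(-1)$, so $H^0(C,N_{C/\mathcal{Z}'})=0$: the curve $C$ does not deform in $\mathcal{Z}'$ even to first order. Hence the composite $\mathcal{C}\hookrightarrow\mathcal{X}'_F\to\mathcal{Z}'$, read via $(\operatorname{pr},\pi_S)\colon\mathcal{C}\to\mathcal{Z}'\times S$ as an $S$-family of closed subschemes of $\mathcal{Z}'$, must be the constant family $C\times S$. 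This immediately gives $\mathcal{C}\subseteq C\times F^{-1}(0)$ scheme-theoretically, hence $\sigma\subseteq F^{-1}(0)$, and in the same stroke yields $\Psi\circ\Phi=\operatorname{id}$ without a separate appeal to rigidity in $Z'$. Your argument becomes complete once you replace ``rigidity of $C$ in $Z'$'' by ``rigidity of $C$ in $\mathcal{Z}'$'' at the crux.
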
 
   \begin{proof} Given an object $(F, \sigma)$ of $\mathbf{F}(S)$, the morphism $F$ defines $\mathcal{X}'$ in $\mathbf{Def}_{X'}(S)$ corresponding to the morphism $F\colon (\Cee, 0) \times S \to (\Cee, 0)$ as above, and a Cartesian diagram
  $$\begin{CD}
  \mathcal{X}' @>>> \mathcal{Z}'\\
  @VVV @VVV\\
  (\Cee, 0) \times S @>{F}>> (\Cee, 0).
  \end{CD}$$ 
  Note that $\mathcal{X}'|F^{-1}(0) = Z'\times F^{-1}(0) \subseteq \mathcal{X}'$, and thus 
  $$ C\times F^{-1}(0) \subseteq \mathcal{X}'|F^{-1}(0)\subseteq \mathcal{X}',$$
  compatibly with the projection to $S$. Given the section $\sigma \subseteq F^{-1}(0)$, define 
  $$\mathcal{C} = C\times \sigma \subseteq C\times F^{-1}(0)  \subseteq 
  \mathcal{X}'.$$
  Thus the pair $(F,\sigma)$ defines a deformation of $(X', C)$ over $S$, and hence an element of $\mathbf{Def}_{X',C}$.   
  
 Conversely,  suppose that we are given a pair $(\mathcal{X}', \mathcal{C})\in \mathbf{Def}_{X',C}(S)$, and let $F\colon (\Cee, 0) \times S \to (\Cee, 0)$ be the  morphism  corresponding  to $\mathcal{X}'$ in $\mathbf{Def}_{X'}(S)$.  Note that $C$ does not deform in $\mathcal{Z}'$, even to first order. Thus  $\mathcal{C}\cong C \times S \subseteq C\times F^{-1}(0)$, compatibly with the projection to $S$, so that the projection of $\mathcal{C}$ onto the second factor $F^{-1}(0)$ defines a section $\sigma$ of the morphism $F^{-1}(0) \to S$. Clearly, the two constructions  $\mathbf{F}(S) \to \mathbf{Def}_{X',C}(S)$ and $\mathbf{Def}_{X',C}(S) \to \mathbf{F}(S)$ are mutual inverses and are functorial under pullback. This defines the isomorphism of functors, and the final statement is clear from the construction.
   \end{proof}
   
   Explicitly, with $P(w;\mathbf{b})= w^n + \sum_{i=0}^{n-2}b_iw^i$, where $\mathbf{b}= (b_{n-2}, \dots, b_0)$,  the universal deformation $\mathcal{U}'$ of $X'$ is given as a small resolution of $x^2+y^2 + z^2 + (P(w;\mathbf{b}))^2 =0$.  Consider 
   $$\{(\lambda, b_{n-2}, \dots, b_0) : P(\lambda;\mathbf{b})= 0\}\subseteq (\Cee\times \Cee^{n-1},0).$$
   Note that if $w-\lambda$ is a factor of $P(w;\mathbf{b})$, then
 $$P(w;\mathbf{b}) = (w-\lambda)(w^{n-1} + \lambda w^{n-2} + t_{n-3} w^{n-3} + \cdots + t_1w + t_0) = (w-\lambda)Q(w;\lambda, \mathbf{t}),$$
 say, where $Q(w;\lambda, \mathbf{t}) = w^{n-1} + \lambda w^{n-2} + t_{n-3} w^{n-3} + \cdots + t_1w + t_0$.
Solving explicitly for the coefficients $b_i$, we see
\begin{align*}
b_{n-2} &= -\lambda^2  + t_{n-3},\\
b_i &= -\lambda t_i + t_{i-1}, \quad 1\leq i \leq n-3,\\
b_0 &= -\lambda t_0.
\end{align*}
If $\lambda, t_{n-3}, \dots , t_0$ are coordinates on $\Cee^{n-1}$, this defines a morphism $\Phi\colon \Cee^{n-1} \to \Cee^{n-1}$ by
$$\Phi(\lambda, t_{n-3}, \dots , t_0) = (-\lambda^2  + t_{n-3}, -\lambda t_{n-3} + t_{n-4}, \dots,  -\lambda t_0)= (b_{n-2}, \dots, b_0).$$ 
Solving for $t_i$ in terms of $\lambda$ and the $b_i$ gives 
\begin{align*}
t_i &= \lambda^{n-i-1} + b_{n-2}\lambda^{n-i-3} + \cdots + b_{i+1}, \quad 1\le i \le n-3,\\
-b_0 & = \lambda^n + b_{n-2}\lambda^{n-2} + \cdots + \lambda b_1,
\end{align*}
recovering the fact that $\lambda$ is a root of $P(w;\mathbf{b})$ (and there are exactly $n$ such roots). 
Thus the morphism $\pi_1 \times \Phi\colon \Cee^{n-1} \to \Cee \times \Cee^{n-1}$ defined by 
$$(\lambda, t_{n-3}, \dots , t_0) \longmapsto (\lambda, \Phi(\lambda, t_{n-3}, \dots , t_0))$$
is an isomorphism from $\Cee^{n-1}$ to $\{(\lambda, b_{n-2}, \dots, b_0) : P(\lambda;\mathbf{b})= 0\}$. The germ $(\Cee^{n-1},0)$, with coordinates $\lambda, t_{n-3}, \dots , t_0$, together with the family which is a small resolution of $\Phi^*\mathcal{U}'$, represents $\mathbf{Def}_{X',C}$, and hence after blowing up represents $\mathbf{Def}_{\hX}$. Moreover, $\Phi$ corresponds to the forgetful morphism $\mathbf{Def}_{X',C} \to \mathbf{Def}_{X'}$. 

By the above,  $\Phi$ is finite of degree $n$ and surjective, and is ramified exactly where   $Q(\lambda;\lambda, \mathbf{t})=0$, \textit{i.e.} where $P(w;\mathbf{b})$ has $\lambda$ as a double root (or where the discriminant of $P(w;\mathbf{b})$ vanishes). In fact,  
$$\det \begin{pmatrix} \frac{\partial b_{n-2}}{\partial \lambda} &\cdots & \frac{\partial b_0}{\partial \lambda}\\[1ex]
 \frac{\partial b_{n-2}}{\partial t_{n-3}} &\cdots & \frac{\partial b_0}{\partial  t_{n-3}}\\
 \vdots&\vdots &\vdots\\
 \frac{\partial b_{n-2}}{\partial t_0} &\cdots & \frac{\partial b_0}{\partial  t_0}
 \end{pmatrix} = \pm   Q(\lambda;\lambda, \mathbf{t}).$$
 
 Summarizing the above discussion, then, we have the following.
 
 \begin{theorem}\label{noncrepthm}  Let $(S_{\hX},0)$ and $(S_{X'},0)$ be the germs prorepresenting the functors $\mathbf{Def}_{\hX}$ and $ \mathbf{Def}_{X'}$ respectively. Then the induced morphism $S_{\hX}\to S_{X'}$ is finite of degree $n$, and its differential at the origin has a $1$-dimensional kernel and cokernel. 
 \end{theorem}


\providecommand{\bysame}{\leavevmode\hbox to3em{\hrulefill}\thinspace}


\begin{thebibliography}{dFKX17+++}

\bibitem[Art74]{Artin}
M.~Artin, \emph{Algebraic construction of {B}rieskorn's resolutions}, J.~Algebra \textbf{29} (1974), 330--348.

\bibitem[Bri71]{Brieskorn}
E.~Brieskorn, \emph{Singular elements of semi-simple algebraic groups}, in: \emph{Actes du {C}ongr\`es {I}nternational des {M}ath\'{e}maticiens} ({N}ice, 1970),
pp.~279--284,  {T}ome 2, Gauthier-Villars \'{E}diteur, Paris, 1971.

\bibitem[Cle87]{Clemensconj}
H.~Clemens, \emph{Curves on higher-dimensional complex projective manifolds},
  in: \emph{Proceedings of the {I}nternational {C}ongress of {M}athematicians}, {V}ol. 1, 2 ({B}erkeley, {C}alif., 1986),  pp.~634--640, Amer.\ Math.\ Soc., Providence, RI, 1987.

\bibitem[Eng18]{Engel}
  P.~Engel, \emph{Looijenga's conjecture via integral-affine geometry}, J.~Differential Geom.\ \textbf{109} (2018), no.~3, 467--495.
  
\bibitem[EF21]{FriedmanEngel}
P.~Engel and R.~Friedman, \emph{Smoothings and rational double point
  adjacencies for cusp singularities}, J.~Differential Geom.\ \textbf{118}
  (2021), no.~1, 23--100.

\bibitem[dFKX17]{dFKX}
T.~de~Fernex, J.~Koll\'{a}r, and C.~Xu, \emph{The dual complex of
  singularities}, in: \emph{Higher dimensional algebraic geometry---in honour of
  {P}rofessor {Y}ujiro {K}awamata's sixtieth birthday}, pp.~103--129, Adv.\ Stud.\ Pure Math., vol.~74, Math.\ Soc.\ Japan, Tokyo, 2017.

\bibitem[Fri83]{F83}
R.~Friedman, \emph{Global smoothings of varieties with normal crossings}, Ann.\
  of Math.~(2) \textbf{118} (1983), no.~1, 75--114.

\bibitem[Fri86]{F}
\bysame, \emph{Simultaneous resolution of threefold double points}, Math.\ Ann.\
  \textbf{274} (1986), no.~4, 671--689.

\bibitem[Fri15]{Fsurvey}
  \bysame, \emph{On the geometry of anticanonical pairs}, preprint \arXiv{1502.02560} (2015).
  
\bibitem[FL22]{FL}
R.~Friedman and R.~Laza, \emph{Deformations of singular {F}ano and
  {C}alabi-{Y}au varieties}, to appear in J.~Differential Geom., preprint \arXiv{2203.04823} (2022).

\bibitem[FL23]{FL23b}
\bysame, \emph{Deformations of {C}alabi-{Y}au varieties with isolated log
  canonical singularities}, preprint \arXiv{2306.03755} (2023).

\bibitem[FL24]{FL23}
\bysame, \emph{Deformations of {C}alabi--{Y}au varieties with {$k$}-liminal
  singularities}, Forum Math.\ Sigma \textbf{12} (2024), Paper No. e59.

\bibitem[FM83]{FriedmanMiranda}
R.~Friedman and R.~Miranda, \emph{Smoothing cusp singularities of small
  length}, Math.\ Ann.\ \textbf{263} (1983), no.~2, 185--212.


\bibitem[Gro85]{Gros}
M.~Gros, \emph{Classes de {C}hern et classes de cycles en cohomologie de
  {H}odge-{W}itt logarithmique}, M\'{e}m.\ Soc.\ Math.\ France (N.S.) (1985),
  no.~21, 87.

\bibitem[Gro97]{gross_defCY}
M.~Gross, \emph{Deforming {C}alabi-{Y}au threefolds}, Math.\ Ann.\ \textbf{308}
  (1997), no.~2, 187--220.

\bibitem[Har67]{HartshorneLC}
R.~Hartshorne, \emph{Local cohomology} (a seminar given by A.
  Grothendieck, Harvard Univ., Fall, 1961), Lecture Notes in Math., vol.  41, Springer-Verlag, Berlin-New York, 1967.

\bibitem[Har77]{Hartshorne}
\bysame, \emph{Algebraic geometry}, Grad.\ Texts in Math., vol.~52,
  Springer-Verlag, New York-Heidelberg, 1977.

\bibitem[Kar86]{karras}
U.~Karras, \emph{Local cohomology along exceptional sets}, Math.\ Ann.\
  \textbf{275} (1986), no.~4, 673--682.

\bibitem[Kaw88]{Kawamata1988}
Y.~Kawamata, \emph{Crepant blowing-up of {$3$}-dimensional canonical
  singularities and its application to degenerations of surfaces}, Ann.\ of
  Math.~(2) \textbf{127} (1988), no.~1, 93--163.

\bibitem[Kaw92]{Kawamata}
\bysame, \emph{Unobstructed deformations. {A} remark on a paper of {Z}. {R}an},
  J.~Algebraic Geom.\ \textbf{1} (1992), no.~2, 183--190.

\bibitem[Kol97]{Kollar}
  J.~Koll\'{a}r, \emph{Singularities of pairs}, in \emph{Algebraic geometry---{S}anta  {C}ruz 1995},  pp.~221--287, Proc.\ Sympos.\ Pure Math., vol.~62, Amer.\ Math.\ Soc., Providence,  RI, 1997.
  
\bibitem[KM98]{KollarMori}
J.~Koll\'ar and S.~Mori, \emph{Birational geometry of algebraic varieties},
  Cambridge Tracts in Math., vol. 134, Cambridge Univ.\ Press,
  Cambridge, 1998.


\bibitem[Lau80]{Laufer}
H.\,B.~Laufer, \emph{Versal deformations for two-dimensional pseudoconvex
  manifolds}, Ann.\ Scuola Norm.\ Sup.\ Pisa Cl.\ Sci.~(4) \textbf{7} (1980),
  no.~3, 511--521.

\bibitem[Loo84]{Looijenga}
E.\,J.\,N.~Looijenga, \emph{Isolated singular points on complete intersections},
  London Math.\ Soc.\ Lecture Note Ser., vol.~77, Cambridge
  Univ.\ Press, Cambridge, 1984.

\bibitem[Nam94]{namtop}
Y.~Namikawa, \emph{On deformations of {C}alabi-{Y}au {$3$}-folds with terminal
  singularities}, Topology \textbf{33} (1994), no.~3, 429--446.

\bibitem[Nam97]{NamikawadefCY}
\bysame, \emph{Deformation theory of {C}alabi-{Y}au threefolds and certain
  invariants of singularities}, J.~Algebraic Geom.\ \textbf{6} (1997), no.~4,
  753--776.

\bibitem[Nam02]{namstrata}
\bysame, \emph{Stratified local moduli of {C}alabi-{Y}au threefolds}, Topology
  \textbf{41} (2002), no.~6, 1219--1237.

\bibitem[NS95]{NS}
Y.~Namikawa and J.\,H.\,M.~Steenbrink, \emph{Global smoothing of {C}alabi-{Y}au
  threefolds}, Invent.\ Math.\ \textbf{122} (1995), no.~2, 403--419.

\bibitem[PS08]{PS}
C.\,A.\,M.~Peters and J.\,H.\,M.~Steenbrink, \emph{Mixed {H}odge structures},
Ergebn.\ Math.\ Grenzgeb.~(3), vol.~52, Springer-Verlag, Berlin, 2008.


\bibitem[Pin80]{Pinkham}
H.\,C.~Pinkham, \emph{R\'esolution simultan\'ee de points doubles rationels}, in: \emph{S\'{e}minaire sur les  {S}ingularit\'{e}s des {S}urfaces} (held at the Centre de Math\'{e}matiques de l'\'{E}cole Polytechnique, Palaiseau, 1976--1977),
(M.~Demazure, H.~Pinkham, and B.~Teissier, eds), pp.~179--203,  Lecture Notes in Math., vol.~777,  Springer, Berlin, 1980.

\bibitem[Pin83]{PinkhamSurvey}
\bysame, \emph{Factorization of birational maps in dimension {$3$}},
  in: \emph{Singularities, {P}art 2} ({A}rcata, {C}alif., 1981),  pp.~343--371, Proc.\ Sympos.\ Pure Math.,  vol.~40, Amer.\ Math.\ Soc., Providence, RI, 1983.


\bibitem[Rei80]{Reidcanon}
M.~Reid, \emph{Canonical {$3$}-folds}, in: \emph{Journ\'{e}es de {G}\'{e}ometrie
  {A}lg\'{e}brique d'{A}ngers, {J}uillet 1979/{A}lgebraic {G}eometry, {A}ngers,
  1979},  pp.~273--310, Sijthoff \& Noordhoff, Alphen aan den Rijn---Germantown, Md., 1980.

\bibitem[Sch71]{Schlessinger}
M.~Schlessinger, \emph{Rigidity of quotient singularities}, Invent.\ Math.\
\textbf{14} (1971), 17--26.

\bibitem[SB83a]{SB1}
N.\,I.~Shepherd-Barron, \emph{Degenerations with numerically effective canonical
  divisor}, in: \emph{The birational geometry of degenerations} ({C}ambridge, {M}ass.,  1981), pp.~33--84, Progr.\ Math., vol.~29, Birkh\"{a}user Boston, Boston, MA, 1983.

\bibitem[SB83b]{SB2}
\bysame, \emph{Extending polarizations on families of {$K3$} surfaces}, in: \emph{The  birational geometry of degenerations} ({C}ambridge, {M}ass., 1981), pp.~135--171, Progr.\  Math., vol.~29, Birkh\"{a}user, Boston, MA, 1983.


\bibitem[Ste83]{Steenbrink}
J.\,H.\,M.~Steenbrink, \emph{Mixed {H}odge structures associated with isolated
  singularities}, in: \emph{Singularities, {P}art 2} ({A}rcata, {C}alif., 1981), pp.~513--536, Proc.\  Sympos.\ Pure Math., vol.~40, Amer.\ Math.\ Soc., Providence, RI, 1983.

\bibitem[Ste97]{SteenbrinkDB}
\bysame, \emph{Du {B}ois invariants of isolated complete intersection
  singularities}, Ann.\ Inst.\ Fourier (Grenoble) \textbf{47} (1997), no.~5,
  1367--1377.

\bibitem[Wah76]{Wahl}
J.\,M.~Wahl, \emph{Equisingular deformations of normal surface singularities.
  {I}}, Ann.\ of Math.~(2) \textbf{104} (1976), no.~2, 325--356.

\bibitem[Wan22]{SSWang}
S.-S.~Wang, \emph{A remark on deformations of 1-convex manifolds with
  exceptional curves}, Bull.\ Sci.\ Math.\ \textbf{178} (2022), Paper No. 103144.

\end{thebibliography}
\end{document}